\def\argmax{\operatornamewithlimits{arg\,max}}
\newcommand{\argmin}{\mathop{\mathrm{arg\min}}}
\newtheorem{definition}{Definition}
\newtheorem{lemma}{Lemma}
\newtheorem{clm}{Claim}
\newtheorem{proposition}{Proposition}
\newtheorem{assumption}{Assumption}
\newtheorem{corollary}{Corollary}
\newcommand{\beqano}{\begin{eqnarray*}}
\newcommand{\eeqano}{\end{eqnarray*}}
\newcommand{\beqa}{\begin{eqnarray}}
\newcommand{\eeqa}{\end{eqnarray}}
\newcommand{\dsp}{\displaystyle}
\newcommand{\ra}{\rightarrow}
\newcommand{\bi}{\begin{itemize}}
\newcommand{\ei}{\end{itemize}}
\newcommand{\be}{\begin{enumerate}}
\newcommand{\ee}{\end{enumerate}}
\newcommand{\cB}{\mathcal{B}}
\newcommand{\cL}{\mathcal{L}}
\newcommand{\cS}{\mathcal{S}}
\newcommand{\cR}{\mathcal{R}}
\newcommand{\cF}{\mathcal{F}}
\newcommand{\cH}{\mathcal{H}}
\newcommand{\cX}{\mathcal{X}}
\newcommand{\vQ}{{\bf Q}}
\newcommand{\vS}{{\bf S}}
\newcommand{\vA}{{\bf A}}
\newcommand{\vU}{{\bf U}}
\newcommand{\vx}{{\bf x}} 
\newcommand{\vy}{{\bf y}} 
\newcommand{\vr}{{\bf r}} 
\newcommand{\vc}{{\bf c}} 
\newcommand{\vZero}{{\bf 0}} 
\newcommand{\vOne}{{\bf 1}} 
\newcommand{\vR}{{\bf R}}
\newcommand{\vQtil}{\widetilde{\bf Q}}
\newcommand{\vUtil}{\widetilde{\bf U}}
\newcommand{\Util}{\widetilde{U}}
\newcommand{\vctil}{\widetilde{\bf c}}
\newcommand{\vlam}{{\mbox{\boldmath{$\lambda$}}}}
\newcommand{\vsig}{{\mbox{\boldmath{$\sigma$}}}}
\newcommand{\vmu}{{\mbox{\boldmath{$\mu$}}}}
\newcommand{\vnu}{{\mbox{\boldmath{$\nu$}}}}
\newcommand{\veps}{\epsilon}
\newcommand{\bQ}{\bar{Q}}
\newcommand{\bZ}{\bar{Z}}
\newcommand{\bB}{\overline{B}}
\newcommand{\bvQ}{{\overline{\vQ}}}
\newcommand{\bPhi}{{\overline{\Phi}}}
\newcommand{\bE}{\mathbb{E}}
\newcommand{\bP}{\mathbb{P}}
\newcommand{\bR}{\mathbb{R}}
\newcommand{\one}{\mathcal{I}}
\newcommand{\e}{^{(\epsilon)}}
\newcommand{\euk}{\epsilon}
\newcommand{\uk}{^{(k)}}
\newcommand{\uve}{^{(\veps)}}
\newcommand{\uvek}{^{(\veps,k)}}
\newcommand{\La}{\langle \>}
\newcommand{\Ra}{\> \rangle}
\newcommand{\cJ}{\mathcal{J}}
\newcommand{\uj}{^{(j)}}
\newcommand{\ujk}{^{(j,k)}}
\newcommand{\bK}{{K}}
\newcommand{\bcR}{{\mathcal{R}}}
\newcommand{\bcF}{{\mathcal{F}}}
\newcommand{\bcH}{{\mathcal{H}}}
\newcommand{\bvc}{{\bf c}}
\newcommand{\bb}{{b}}
\newcommand{\vs}{{\bf s}}
\newcommand{\vpsi}{{\mbox{\boldmath{$\psi$}}}}
\title{Asymptotically Tight Steady-State Queue Length Bounds Implied By Drift Conditions\footnote{This is an updated version of the paper that appeared in QUESTA in 2012. We have corrected the statements of some of propositions and simplified some of the notation.}}
\author{Atilla Eryilmaz\\Department of ECE\\The Ohio State University\\eryilmaz@osu.edu \and R. Srikant\\
Department of ECE and CSL\\University of Illinois\\rsrikant@illinois.edu}
\begin{document}

\maketitle

\begin{abstract}
The Foster-Lyapunov theorem and its variants serve as the primary tools for studying the stability of queueing systems. In addition, it is well known that setting the drift of the Lyapunov function equal to zero in steady-state provides bounds on the expected queue lengths. However, such bounds are often very loose due to the fact that they fail to capture resource pooling effects. The main contribution of this paper is to show that the approach of ``setting the drift of a Lyapunov function equal to zero" can be used to obtain bounds on the steady-state queue lengths which are tight in the heavy-traffic limit. The key is to establish an appropriate notion of state-space collapse in terms of steady-state moments of weighted queue length differences, and use this state-space collapse result when setting the Lyapunov drift equal to zero. As an application of the methodology, we prove the steady-state equivalent of the heavy-traffic optimality result of Stolyar for wireless networks operating under the MaxWeight scheduling policy.
\end{abstract}

\section{Introduction}
\label{sec:intro}

The performance of control policies in queueing systems is evaluated by studying the sum of appropriately weighted queue lengths, either in steady-state and along almost every sample path. However, deriving optimal control policies is difficult because any stochastic optimal control formulation of the problem is often intractable. An alternative is to study the system in heavy-traffic, i.e., when the vector of exogenous arrival rates to the queueing system is close to the capacity region in the network. In such regimes, the behavior of the network often simplifies: a multi-dimensional state description of the queueing system reduces to a single dimension (or to a small number of dimensions) and it is easier to reason about the optimality of the control policies in one dimension. This behavior of the queueing system in heavy-traffic, called \emph{state-space collapse}, is at the heart of most heavy-traffic optimality results.

Heavy-traffic analysis of queueing systems using diffusion limits was initiated by Kingman in \cite{kin62b} and state-space collapse was observed for priority queues by Whitt in \cite{whi71}. The use of state-space collapse to study heavy-traffic optimality was introduced by Foschini and Salz in \cite{fossal78} in their classic paper on join-the-shortest queue (JSQ) routing. Since then, the methodology and applicability of this technique have been extended in a number of papers; see, for example, the works of Reiman \cite{rei83}, Bramson \cite{bra98}, Williams \cite{wil98}, Harrison \cite{har98}, Harrison and Lopez \cite{harlop99}, and Bell and Williams \cite{belwil05}. This list of papers is by no means exhaustive, it is only meant to be a representative sampling of the papers in the area. Many of these papers which consider multi-queue models served by multiple resources rely on the so-called \emph{resource pooling condition}, under which the behavior of the queueing system under study in heavy-traffic is governed by a single bottleneck resource. This results in the state-space collapse mentioned earlier, which is critical to establishing heavy-traffic optimality. In addition, these papers also implicitly assume that the scheduling policy in the queueing system is work conserving, i.e., backlogged work is served at the maximum possible rate by each station. In a seminal paper on generalized switches, Stolyar extended the notion of state-space collapse and resource pooling to systems where such per-node work-conserving policies are hard to define \cite{sto04}. In particular, he showed that a class of scheduling policies called the MaxWeight policies, introduced by Tassiulas and Ephremides \cite{taseph92} (see \cite{leeagr01,neemodroh05,erysriper05,mey07a} for extensions) is heavy-traffic optimal in an appropriate sense. While Stolyar's work considered single-hop traffic only, the proof of heavy-traffic optimality in the multi-hop case was provided by Dai and Lin \cite{dailin08}. Extensions to other types of scheduling policies were presented in \cite{shasristo04} by Shakkottai, Stolyar and Srikant. It should be noted that the MaxWeight policy was shown to be optimal at all traffic loads for a simple wireless network model (with symmetric Bernoulli arrival and service processes) by Tassiulas and Ephremides \cite{taseph93}, who also obtained optimal policies (which are not of MaxWeight type) for wireless networks where the links are arranged in a line \cite{taseph94}.

In general, state-space collapse does not lead to a one-dimensional state space. When the state-space collapse is to a multi-dimensional state, it is often harder to prove optimality in heavy-traffic. A model of the Internet with multi-dimensional state-space collapse has been considered by  Kang, Kelly, Lee and Williams \cite{kankelleewil09}. While the resource allocation policy considered there is not optimal in heavy-traffic, an important contribution there is to show that the expected workload is only a function of the number of resources in the system and not the number of flows in the system. State-space collapse is key to establishing such a result. Multi-dimensional state-space collapse has also been studied by Shah and Wischik for generalized switches \cite{shawis07}. In addition to recovering many of the earlier results for other models as special cases, a key contribution in \cite{shawis07} is the introduction and study of appropriate notions of optimality when the arrival rates lie outside the capacity region of the system. Multi-dimensional state-space collapse for a very simple four-link wireless network has been considered by Ji, Athanasopoulou and Srikant in \cite{jiathsri10} who derive the heavy-traffic optimal policy for a network which has two bottleneck resources in the heavy-traffic limit. In contrast to heavy-traffic limits, Venkatramanan and Lin have shown the optimality of the MaxWeight policies in a large-deviations sense \cite{venlin09}.

Much of work on heavy-traffic analysis of queueing systems relies on showing that a scaled version of the queue lengths in the system converges to a regulated Brownian motion. The typical result shows sample-path optimality in scaled time over a finite time interval. Often, the results allow a straightforward conjecture regarding the distribution in steady-state. Proving convergence to the steady-state distribution is an additional step which is not often undertaken. (Some exceptions are the works by Gamarnik and Zeevi \cite{gamzee06} and the recent work of Stolyar and Yudovina \cite{stoyud11}.) For example, to establish the convergence of steady-state distributions in \cite{sto04}, one has to show that the limits used for the diffusion scaling and the steady-state limit (i.e., time going to $\infty$) can be interchanged, which can be done using the results already established in \cite{sto04}. In parallel with the development of analyzing the heavy-traffic limits of queueing systems, Harrison \cite{har88} suggested directly approximating the stochastic arrival and service processes in queueing systems by Brownian motions. This theme was further developed by Laws \cite{law92}, and Kelly and Laws \cite{kellaw93}. The main idea in these papers is to study convex optimization problems suggested by flow conservation equations, along with the Brownian control problems. This theme has been influential in much of the work on heavy-traffic optimality of control policies for queueing systems.

Instead of establishing optimality of control policies, if one is simply interested in evaluating the performance of a particular policy, a common technique is to study the drift of an appropriate Lyapunov function in steady-state. Assuming that appropriate moments exist, which sometimes might be non-trivial to establish as in the work of Glynn and Zeevi \cite{glyzee08}, setting the drift equal to zero in steady-state immediately provides bounds on the steady-state moments of queue lengths. An early use of this technique was used by Kingman to derive his well-known bound on the expected waiting time in a $G/G/1$ queue \cite{kin62a}. This idea was pursued successfully by Kumar and Kumar \cite{kumkum94} and by Bertsimas, Paschalidis and Tsitsiklis \cite{berpastsi94} who present many extensions of the basic idea to different types of queueing systems. The method was extended to loss models by Kumar, Srikant and Kumar in \cite{kumsrikum98}. The study of Lyapunov drift to analyze performance has its roots in Markov chain stability theory using the Foster-Lyapunov theorem (see the books by Asmussen \cite{asm03}, Meyn \cite{mey07}, and Meyn and Tweedie \cite{meytwe93}). An explicit connection between moment bounds and Lyapunov drift-based stability was provided by Kumar and Meyn in \cite{kummey95}. While most of these papers obtain bounds on polynomial moments, Hajek \cite{haj82} and Bertsimas, Gamarnik and Tsitsiklis \cite{bergamtsi01} obtained exponential-type bounds on the queue lengths. The bounds obtained by Hajek will be very useful to us in this paper.

Now that we have described prior work on heavy-traffic analysis and performance evaluation of queueing systems, we present our motivation for this paper. The Lyapunov drift-based moment bounding techniques are simple to derive since they require elementary probabilistic tools. Although the arrival and service processes are assumed to be quite simple to apply these techniques, the queueing models and control techniques can be quite complicated, thus making the techniques fairly general in their applicability. However, in many cases, the bounds obtained from these techniques are extremely loose in complex queueing systems such as wireless networks operating under the MaxWeight policy, or sometimes even in simple systems such as parallel servers to which arrivals are routed according to the JSQ policy. Therefore, the bounds obtained from drift considerations are sometimes not very useful to evaluate control policies for queueing systems. The main reason is that, simple drift-based bounds do not exploit resource pooling effects observed in heavy-traffic. The key to introducing resource pooling effects into the drift-based arguments is to define an appropriate notion of state-space collapse that can be easily incorporated into the derivation of the drift-based moment bounds. In this paper, we present techniques for doing so, i.e., we present techniques for introducing the notion of state-space collapse into the drift-based moment bound derivations.  In particular, we obtain upper bounds on the expected value of weighted functions of queue lengths for queueing systems operating under certain control policies, which coincide with lower bounds in heavy-traffic. Additionally, our techniques provide explicit bounds on moments even when the system is not in heavy-traffic, and can thus serve as a performance evaluation tool for the pre-limit systems when combined with optimization techniques such as those suggested in \cite{shawis08}.

We consider two types of queueing systems to illustrate our methodology: parallel servers where arrivals are routed according to the JSQ policy and wireless networks operating under the MaxWeight policy. In this paper, we call these two problems, the routing problem and the scheduling problem, respectively. The scheduling problem is directly motivated by scheduling in wireless networks and high-speed networks, while the routing problem is an abstraction of multi-path routing in communication networks, wireless or wireline. The proofs of results are simpler for the JSQ problem, and so for ease of exposition, we illustrate all the steps in our derivation using JSQ first. The MaxWeight policy is harder to analyze, but once the basic idea behind the proof is presented for JSQ, the extension to MaxWeight follows when the geometrical insight in Stolyar's work \cite{sto04} is translated into the Lyapunov drift framework. We note that the work of Gans and van Ryzin \cite{ganvan97,ganvan98} is similar in spirit to our work. They study heavy-traffic optimality in terms of steady-state moments. They first obtain a lower bound on the workload in a $G/G/1$ queue, which is similar to the Kingman-type bound \cite{kin62a} that we also use. However, beyond these similarities, their work is quite different in the following aspects: their upper bounds are derived for policies which make explicit use of the knowledge of the arrival rates and the topology of the capacity region (which is unknown in wireless networks with time-varying channels as we will see later). In contrast, the policies that we study do not require any of this information, and further, our Lyapunov drift-based approach appears to be quite different from the techniques used in \cite{ganvan97,ganvan98}. Also related is the work of Gupta and Shroff \cite{gupshr10}, who use Lyapunov-based bounds to numerically study the performance of MaxWeight algorithms. However, their bounds are not provably tight although they perform well in numerical studies and simulations. Thus, they cannot be used to establish heavy-traffic optimality.

\subsection{Outline of the Methodology}
\label{sec:MethodologyOutline}

The main contribution of the paper is a Lyapunov-drift based approach to obtaining bounds on steady-state queue lengths that are tight in heavy traffic. Our approach consists of three major steps:

\begin{enumerate}
\item Lower bound: First, we present a lower bound for the expected queue lengths in a single-server queue, and use this to identify appropriate lower bounds for both the parallel server model and the wireless network model. The lower bound that we obtain for the single-server queue is perhaps well-known and it follows the basic idea behind the Kingman bound \cite{kin62a}. However, we have not seen it explicitly stated, so we provide it here along with a short derivation.

\item State-space collapse: The next step is to show state-space collapse to a single dimension for both models. Unlike fluid limit proofs, in our model, the state does not actually collapse to a single dimension. What we show is that compared to the queue lengths, the differences between appropriately weighted queue lengths is small in an expected sense in steady-state.

\item Upper bound: The final step is to derive an upper bound on the expected steady-state queue lengths. For this step, as we will see, we use a natural Lyapunov function suggested by the resource pooling to be expected in the two problems. However, setting the drift of the Lyapunov function equal to zero directly does not yield a good upper bound. In addition, one has to use the state-space collapse result from the previous step to get an asymptotically tight upper bound. Once the upper bound is derived, checking that the upper and lower bounds coincide in heavy-traffic is straightforward. Even when the system is not in heavy-traffic, the upper bounds hold and can be of independent interest for performance analysis.
\end{enumerate}

\section{Notation, System Models and Other Preliminaries}
\label{sec:model}

In this section, we introduce some notation that will be used throughout the paper.
We consider the control of a network of $L$ queues that synchronously evolve in a time-slotted fashion. The evolution of the length of queue $l$ is governed by:
 \beqa
 Q_l[t+1] &=& \left(Q_l[t]+A_l[t]-S_l[t]\right)^+ \nonumber \\
 &=& Q_l[t] + A_l[t] - S_l[t] + U_l[t], \quad \textrm{for each } l=1,\cdots,L, \label{eqn:Qevolve}
 \eeqa
where $A_l[t]$ and $S_l[t]$ respectively denote the amount of
arrivals and offered services to queue $l$ in slot $t$, and $U_l[t]
\triangleq \max(0,S_l[t]-A_l[t]-Q_l[t])$ denotes the unused service
by queue $l$ in slot $t.$ We assume that $A_l[t]$ and $S_l[t]$ are
non-negative integer valued so that $Q_l[t],$ for each $l,$ evolves
over the space of non-negative integers. For convenience, we will
sometimes use boldface letters $\vQ, \vA, \vS,$ and $\vU$ to
represent the $L$-dimensional vectors of queue-lengths, arrivals,
offered services, and unused services, respectively. In our models
in this paper, the queue length process $\{\vQ[t]\}_{t\geq 0}$ will
form a Markov chain. We will say that the queueing system is stable
if this Markov chain is positive recurrent, and use $\bvQ$ to denote
the random vector whose probability distribution is the same as the
steady-state distribution of $\{\vQ[t]\}_{t\geq 0}$.

Since the queue-length vector $\vQ[t]$ evolves over integer values in the nonnegative quadrant of the $L$-dimensional real vector space $\bR_+^L,$ we will have occasion to use the following notation in the paper and so we state them once for convenience: for two vectors $\vx=(x_l)$ and $\vy=(y_l)$ in $\bR^L,$ their inner product, Euclidean norm, and the angle between them are respectively given by
 \beqa
 \La \vx,\vy \Ra \triangleq \sum_{l=1}^L x_l y_l,
    \quad \|\vx\|\triangleq \sqrt{\La \vx,\vx \Ra} = \sqrt{\sum_{l=1}^L x_l^2},
    \quad \theta_{\vx,\vy} \triangleq \arccos\left(\frac{\La \vx,\vy\Ra}{\|\vx\|\|\vy\|}\right). \label{eqn:RL_defs}
 \eeqa
We use $\preceq, \prec, \succeq$ to denote component-wise comparison of two vectors, and $\vZero$ and $\vOne$ to denote all-zero and all-one vectors, respectively. We call two vectors \emph{orthogonal,} denoted $\vx\perp \vy,$ if their inner product is zero, in which case the Pythagorean Theorem applies:
 \beqa
 \|\vx+\vy\|^2 = \|\vx\|^2 + \| \vy \|^2, \quad \textrm{for } \vx\perp\vy. \label{eqn:Pythagorean}
 \eeqa

Additional characteristics and constraints on the arrival and service processes depend on the particular problem we will consider, which will be discussed in Sections~\ref{sec:model_Routing} and \ref{sec:model_Scheduling} in the context of routing and scheduling, respectively. Next, we present a result that provides a bound on the steady-state moment-generating function of a random process using drift conditions.

\subsection{A Useful Result}

Our heavy-traffic analysis of the above models uses a result developed by Hajek \cite{haj82} in a more general context. In particular, this result will be useful in proving state collapse in the sense mentioned in Section~\ref{sec:MethodologyOutline}. We present it below for easy reference.

\begin{lemma} \label{lem:Hajek}
For an irreducible and aperiodic Markov Chain $\{X[t]\}_{t\geq 0}$ over a countable state space $\cX,$ suppose $Z:\cX\ra \bR_+$ is a nonnegative-valued Lyapunov function. We define the drift of $Z$ at $X$ as $$\Delta Z(X) \triangleq [Z(X[t+1])-Z(X[t])]\>\one(X[t]=X),$$ where $\one(.)$ is the indicator function. Thus, $\Delta Z(X)$ is a random variable that measures the amount of change in the value of $Z$ in one step, starting from state $X.$ This drift is assumed to satisfy the following conditions:
\begin{enumerate}
\item[(C1)] There exists an $\eta>0,$ and a $\kappa<\infty$ such that
 \beqano
    \bE[\Delta Z(X) | X[t]=X]
    \leq - \eta, \textrm{ for all } X\in \cX \textrm{ with } Z(X)\geq \kappa.
 \eeqano
\item[(C2)] There exists a $D < \infty$ such that
  \beqano
  \bP\left(|\Delta Z(X)| \leq D\right) = 1, \quad \textrm{ for all } X \in \cX.
  \eeqano
\end{enumerate}
Then, there exists a $\theta^\star > 0$ and a $C^\star < \infty$ such that
$$\limsup_{t\ra \infty} \bE\left[e^{\theta^\star Z(X[t])}\right] \leq C^\star.$$
If we further assume that the Markov Chain $\{X[t]\}_t$ is positive recurrent, then $Z(X[t])$ converges in distribution to a random variable $\bZ$ for which
$$ \bE\left[e^{\theta^\star \bZ}\right] \leq C^\star,$$
which directly implies that all moments of $\bZ$ exist and are finite.
\end{lemma}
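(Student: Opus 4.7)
The plan is to show that for a sufficiently small $\theta>0$, the process $e^{\theta Z(X[t])}$ satisfies a geometric drift inequality of the form $\mathbb{E}[e^{\theta Z(X[t+1])}\mid X[t]]\leq \rho\, e^{\theta Z(X[t])}+B$, with $\rho<1$, and then iterate. The choice of $\theta^\star$ that makes $\rho<1$ is driven by a Taylor-expansion of the conditional MGF of the drift, using (C1) to get a negative linear term and (C2) to control the higher-order terms.

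\textbf{Step 1: exponential drift from linear drift.} The key calculation is to bound $\bE[e^{\theta \Delta Z(X)}\mid X[t]=X]$ for $X$ with $Z(X)\geq \kappa$. Using the elementary inequality
\[
e^{\theta y}\;\leq\;1+\theta y+\tfrac{1}{2}\theta^2 y^2\,e^{\theta|y|},\qquad y\in\bR,
\]
together with (C2) (which gives $|\Delta Z|\leq D$ a.s.) and (C1) (which gives $\bE[\Delta Z\mid X]\leq -\eta$), I get
\[
\bE\!\left[e^{\theta\Delta Z(X)}\,\middle|\,X[t]=X\right]\;\leq\;1-\theta\eta+\tfrac{1}{2}\theta^2 D^2 e^{\theta D},
\qquad Z(X)\geq\kappa.
\]
The right-hand side is a smooth function of $\theta$ equal to $1$ at $\theta=0$ with derivative $-\eta<0$ there, so there exists a $\theta^\star>0$ small enough that $\rho:=1-\theta^\star\eta+\tfrac{1}{2}(\theta^\star)^2 D^2 e^{\theta^\star D}<1$. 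Multiplying by $e^{\theta^\star Z(X)}$ yields the contraction
\[
\bE\!\left[e^{\theta^\star Z(X[t+1])}\,\middle|\,X[t]=X\right]\;\leq\;\rho\,e^{\theta^\star Z(X)},\qquad Z(X)\geq\kappa.
\]

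\textbf{Step 2: handling the bounded region.} When $Z(X)<\kappa$, condition (C2) gives $Z(X[t+1])\leq Z(X)+D<\kappa+D$ deterministically, so $e^{\theta^\star Z(X[t+1])}\leq e^{\theta^\star(\kappa+D)}$. Combining the two cases and integrating over the law of $X[t]$,
\[
\bE\!\left[e^{\theta^\star Z(X[t+1])}\right]\;\leq\;\rho\,\bE\!\left[e^{\theta^\star Z(X[t])}\right]+e^{\theta^\star(\kappa+D)}.
\]
Iterating this linear recursion starting from any fixed initial state gives
\[
\bE\!\left[e^{\theta^\star Z(X[t])}\right]\;\leq\;\rho^{t}\,e^{\theta^\star Z(X[0])}+\frac{e^{\theta^\star(\kappa+D)}}{1-\rho},
\]
so taking $t\to\infty$ proves $\limsup_{t\to\infty}\bE[e^{\theta^\star Z(X[t])}]\leq C^\star$ with $C^\star:=e^{\theta^\star(\kappa+D)}/(1-\rho)$.

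\textbf{Step 3: passage to the stationary limit.} If the chain is positive recurrent (and, by assumption, irreducible and aperiodic), then $X[t]$ converges in distribution to its unique stationary distribution, so $Z(X[t])\Rightarrow \bZ$ and hence $e^{\theta^\star Z(X[t])}\Rightarrow e^{\theta^\star \bZ}$ by the continuous mapping theorem. Since $e^{\theta^\star z}\geq 0$, Fatou's lemma for weak convergence gives $\bE[e^{\theta^\star \bZ}]\leq \liminf_{t}\bE[e^{\theta^\star Z(X[t])}]\leq C^\star$, and finiteness of the MGF at $\theta^\star>0$ implies finiteness of all polynomial moments.

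\textbf{Main obstacle.} The only real subtlety is Step~1: one has to choose the exponent $\theta^\star$ small enough that the quadratic-in-$\theta$ correction from the Taylor bound is dominated by the linear negative-drift term, while $\theta^\star$ remains strictly positive so the conclusion is nontrivial. The bounded-increments hypothesis (C2) is what makes this Taylor bound uniform in $X$, which is why both (C1) and (C2) are essential; without (C2) the higher-order term $\bE[(\Delta Z)^2 e^{\theta|\Delta Z|}\mid X]$ could blow up and the argument would fail.
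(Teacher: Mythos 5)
Your proof is correct. Note that the paper does not prove this lemma at all: it is quoted directly from Hajek's 1982 paper and used as a black box, so there is no in-paper argument to compare against. Your derivation is the standard exponential-drift (geometric Lyapunov) argument underlying Hajek's result: the Taylor bound $e^{\theta y}\leq 1+\theta y+\tfrac12\theta^2 y^2 e^{\theta|y|}$ combined with (C1)--(C2) gives the contraction $\rho<1$ for small $\theta^\star$, the bounded region $Z<\kappa$ contributes only the additive constant $e^{\theta^\star(\kappa+D)}$, and iterating the affine recursion plus Fatou's lemma under weak convergence yields the stationary bound; all of these steps are sound (the combination step in your Step~2 works because the contraction term is nonnegative and the small-$Z$ term is a deterministic constant, and the continuous mapping step is automatic on a countable state space). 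So your proposal serves as a correct self-contained substitute for the citation.
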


We now introduce the two queueing control problems that will be considered in this paper.

\subsection{Routing Problem and the JSQ Routing Policy}
\label{sec:model_Routing}

In the routing problem, we consider a system of $L$ parallel servers and their associated queues. Packets are routed to one of the queues upon arrival (see Figure~\ref{fig:JSQ_System}). The goal is to find a routing policy which minimizes the total workload in the system.
\begin{figure}[!htbp]
\begin{center}
\includegraphics[width=3.1in]{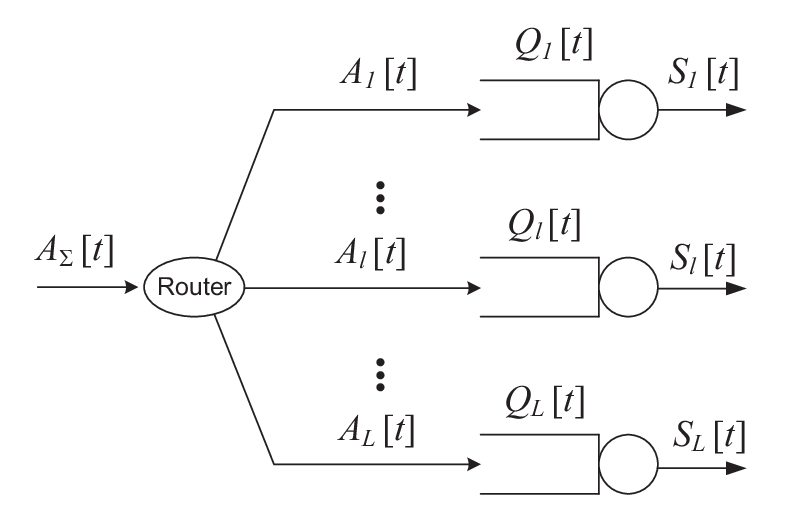}
\caption{System model for the routing problem.}
 \label{fig:JSQ_System}
 \end{center}
\end{figure}

We let $A_\Sigma[t]$ denote the random number of exogenous arrivals to the router at the beginning of slot $t.$ The router is then required to distribute the incoming packets to $L$ queues for service.
In other words, in each time slot $t$, the router is expected to select a nonnegative-valued vector $\vA[t] = (A_l[t])_l$ such that $\dsp \sum_{l=1}^L A_l[t] = A_\Sigma[t].$ Then, the queues will evolve as in (\ref{eqn:Qevolve}) based on this routing decision. We make the following assumptions on the arrival and service processes.

\begin{assumption}[Assumptions for the Routing Problem]\label{as:routing:AandS}
We assume that the exogenous arrival process and the service processes for different queues are independent (not necessarily identically distributed). Also, we assume that the exogenous arrival process $\{A_\Sigma[t]\}_{t}$ and each service process $\{S_l[t]\}_t$ is composed of a sequence of independent and identically distributed (i.i.d.) nonnegative-integer-valued and bounded random variables with $A_\Sigma[t]\leq A_{max}<\infty$ and $S_l[t] \leq S_{max}<\infty,$ for all $l$ and all $t.$
\end{assumption}

We additionally use the following notation for the mean and variance of different random variables in the routing problem: for the arrival process, $\lambda_\Sigma\triangleq\bE[A_\Sigma[1]]$,
$\sigma_\Sigma^2\triangleq var(A_\Sigma[1])$; for the service process of each queue $l$, $\mu_l \triangleq \bE[S_l[1]]$, $\nu^2_l\triangleq var(S_l[1])$; and for the hypothetical total service process, defined as $S_\Sigma[t]\triangleq \sum_{l=1}^L S_l[t],$ we let $\mu_\Sigma \triangleq \sum_{l=1}^L \mu_l,$ $\nu^2_\Sigma \triangleq \sum_{l=1}^L \nu^2_l.$ Finally, we also introduce the boldface notation for the $L$-dimensional vector of mean and variances for the service processes: $\vmu \triangleq (\mu_l)_l,$ and $\vnu^2 \triangleq (\nu_l^2)_l.$ Without loss of generality\footnote{We can eliminate any server with $\mu_l=0$ from the system since $S_l[t]=0$ for all $t$ with probability one for any such server.}, we assume that $\mu_{min}\triangleq\min_l \mu_l>0.$

We are interested in the performance of a well-known and very natural routing policy, called \emph{Join the Shortest Queue} (JSQ) Routing Policy, defined next.

\begin{definition}[JSQ Routing Policy]\label{def:JSQ}
For the routing problem introduced above (see Figure~\ref{fig:JSQ_System}), in each time slot $t,$ the \emph{Join-Shortest-Queue (JSQ) Routing Policy} forwards all incoming packets to one of the the queues with the shortest queue-length in that time-slot, breaking ties uniformly at random, i.e., given $\vQ[t],$ and $A_\Sigma[t],$ the arrival vector $\vA[t]$ is selected as
 \beqano
 \vA[t] &=& RAND\left\{\argmin_{\{\vA\geq \vZero: \sum_{l} A_l = A_\Sigma[t]\}} \La \vA, \vQ[t] \Ra \right\},
 \eeqano
where RAND denotes that ties are broken uniformly at random.
\end{definition}

The principle behind the JSQ routing policy is very intuitive: it constantly tries to equalize the queue-lengths at all servers by routing all the incoming packets in each time slot to one of the smallest queues. Thus, it hopes to make sure that no server is idle when there is work to be done. If this can be achieved, then the system will behave as though all the servers pool their resources together and act like a single server queue. In this paper, we make this claim precise by using a Lyapunov-based analysis.

It is clear that the maximum achievable service rate for the parallel queue system is $\mu_\Sigma = \sum_{l=1}^L \mu_l.$
A control policy is said to be throughput optimal if it can stabilize any set of arrival rates which can be stabilized by another policy.
The JSQ routing policy is well known to be throughput-optimal; additionally, the following lemma states that all moments are finite in steady-state. The proof is provided in Appendix~\ref{sec:JSQ_BddMoments} for completeness.

\begin{lemma}\label{lem:JSQ_BddMoments}
If the mean exogenous arrival rate $\lambda_\Sigma$ lies outside the capacity region $\cR,$ i.e., $\lambda_\Sigma > \mu_\Sigma,$ then the queueing network cannot be stabilized by any feasible routing policy.

For any $\lambda_\Sigma$ in the interior of $\cR,$ i.e., $\lambda_\Sigma < \mu_\Sigma,$ the JSQ Routing Policy stabilizes the network in the following strong sense: $\{\vQ[t]\}_t$ converges in distribution to a random vector $\bvQ$ whose all moments are bounded, i.e., there exist constants $\{M_r\}_{r=1,2,\cdots}$ such that $\bE[\|\bvQ\>\|^r] = M_r.$
\end{lemma}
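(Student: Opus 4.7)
For the necessity claim ($\lambda_\Sigma > \mu_\Sigma$), I would give a short pathwise argument: regardless of the routing rule, at most $\sum_l S_l[t]$ packets can depart the network in one slot, so
\[
\sum_l Q_l[t+1] \;\geq\; \sum_l Q_l[t] + A_\Sigma[t] - \sum_l S_l[t].
\]
The increments on the right are i.i.d.\ with mean $\lambda_\Sigma-\mu_\Sigma>0$, so the strong law of large numbers forces $\sum_l Q_l[t]\to\infty$ almost surely, ruling out positive recurrence of $\vQ[t]$ under any policy.

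For the positive claim, my plan is to take $V(\vQ)=\|\vQ\|^2$, establish a drift bound of the form $\bE[\Delta V \mid \vQ]\leq -\eta\|\vQ\|+K$, and then feed this into Lemma~\ref{lem:Hajek} applied to $Z(\vQ)=\|\vQ\|$. From the pathwise inequality $Q_l[t+1]^2 \leq (Q_l[t]+A_l[t]-S_l[t])^2$ and the independence of the arrival and service processes from $\vQ[t]$,
\[
\bE[\Delta V \mid \vQ] \;\leq\; 2\,\bE\!\Bigl[\sum_l Q_l A_l \,\Big|\, \vQ\Bigr] - 2\sum_l Q_l \mu_l + K,
\]
where $K$ absorbs the uniformly bounded second-moment remainder. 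Under JSQ every arriving packet in slot $t$ enters a shortest queue, so $\sum_l Q_l A_l[t] = A_\Sigma[t]\cdot \min_l Q_l$ pathwise, giving $\bE[\sum_l Q_l A_l \mid \vQ] = \lambda_\Sigma \min_l Q_l$. Writing $M=\min_l Q_l$ and $T=\sum_l Q_l$, the decomposition $\sum_l Q_l\mu_l = M\mu_\Sigma + \sum_l(Q_l-M)\mu_l \geq M\mu_\Sigma + \mu_{min}(T-LM)$ combined with $M\leq T/L$ yields, after a case split on whether $\mu_\Sigma-\lambda_\Sigma$ exceeds $L\mu_{min}$,
\[
\lambda_\Sigma M - \sum_l Q_l \mu_l \;\leq\; -\eta_0\, T,\qquad \eta_0 := \min\!\Bigl(\mu_{min},\;\tfrac{\mu_\Sigma-\lambda_\Sigma}{L}\Bigr)>0.
\]
Since $T\geq \|\vQ\|$ for nonnegative coordinates, this delivers $\bE[\Delta V \mid \vQ] \leq -2\eta_0\|\vQ\|+K$.

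To convert this into a state-independent negative drift, I would use the concavity bound $\sqrt{1+x}\leq 1+x/2$ to obtain the pathwise estimate $\|\vQ[t+1]\|-\|\vQ[t]\| \leq \Delta V/(2\|\vQ[t]\|)$ whenever $\|\vQ[t]\|>0$, so that
\[
\bE[\Delta Z \mid \vQ] \;\leq\; -\eta_0 + \frac{K}{2\|\vQ\|},
\]
which is at most $-\eta_0/2$ once $\|\vQ\|\geq 2K/\eta_0$; this verifies (C1). Condition (C2) is immediate because $|\Delta Z|\leq \|\vQ[t+1]-\vQ[t]\|\leq \|\vA[t]\|+\|\vS[t]\|+\|\vU[t]\|$ is uniformly bounded under Assumption~\ref{as:routing:AandS}. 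Foster's criterion applied to $V$ then gives positive recurrence of $\vQ[t]$, and standard irreducibility and aperiodicity (the origin is reachable from every state with positive probability and has a self-loop whenever $A_\Sigma[t]=0$) ensure convergence in distribution to a random vector $\bvQ$; Lemma~\ref{lem:Hajek} yields $\bE[\exp(\theta^\star\|\bvQ\|)]<\infty$, which in particular implies $\bE[\|\bvQ\|^r]<\infty$ for every $r$.

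The main obstacle is the drift decomposition that upgrades the naive estimate $\lambda_\Sigma M - \mu_\Sigma M = -(\mu_\Sigma-\lambda_\Sigma)\min_l Q_l$, which is proportional only to the \emph{minimum} queue, into a bound proportional to the entire sum $T$. This is precisely where the load-balancing character of JSQ enters the analysis: it is what allows the concavity trick to produce a state-independent negative drift for $\|\vQ\|$, and thereby to satisfy Hajek's hypotheses rather than merely Foster's.
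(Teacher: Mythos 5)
Your proof takes essentially the same route as the paper's: verify Hajek's conditions for $Z(\vQ)=\|\vQ\|$ by bounding its drift through the drift of $\|\vQ\|^2$ divided by $2\|\vQ\|$ (the concavity step), and use the JSQ identity $\sum_l Q_l A_l = A_\Sigma \min_l Q_l$ to extract a negative term proportional to $\|\vQ\|$; your $\min$/sum decomposition with $\eta_0=\min\bigl(\mu_{min},\tfrac{\mu_\Sigma-\lambda_\Sigma}{L}\bigr)$ is just a mildly different (and correct) way of doing the algebra the paper performs via the hypothetical rate vector $\vlam=\vmu-\tfrac{\epsilon}{L}\vOne$. Your short SLLN argument for the instability direction is also correct, and in fact addresses a part of the lemma that the paper's appendix proof does not treat explicitly.
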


\subsection{Scheduling Problem and the MW Scheduling Policy}
\label{sec:model_Scheduling}

In the scheduling problem, the goal is to select an instantaneous service rate vector for $L$ queues with independent packet arrival processes, subject to certain feasibility constraints on the rates at which the queues can be simultaneously served (see Figure~\ref{fig:MWS_System}).
\begin{figure}[!htbp]
\begin{center}
\includegraphics[width=3.5in]{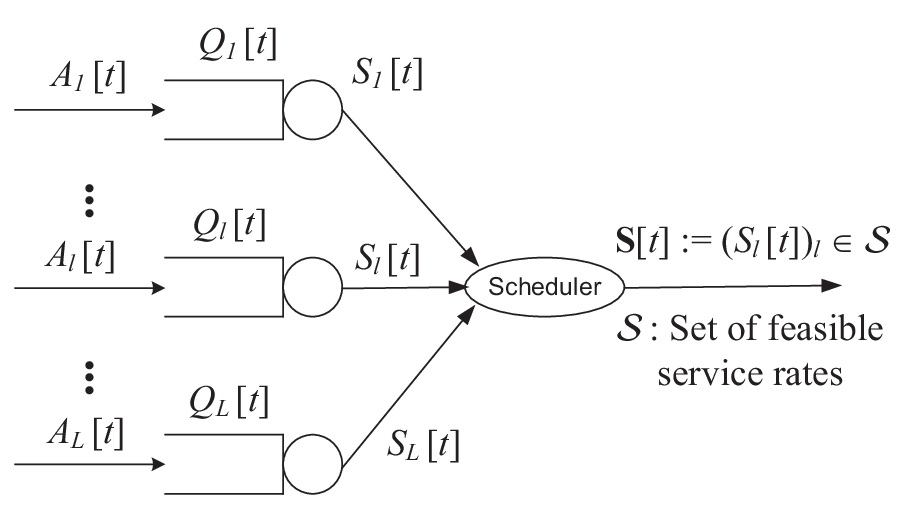}
\caption{System model for the scheduling problem.}
 \label{fig:MWS_System}
 \end{center}
\end{figure}

Here, the feasibility constraints can be used to model various types of coupling between service availability at different queues. For example, in the context of wireless networks with collision-based interference constraints, the feasibility constraints can capture interference among simultaneous wireless transmissions. Generically, we let $\cS$ denote the \emph{set of feasible service rate vectors} that the controller is allowed to select from. Hence, each element $\vS \triangleq (S_l)_l$ of $\cS$ is a vector of service rates that can be offered to the queues in one slot, and hence it contains the all zero vector $\vZero$, corresponding to no service.

In each time slot $t$, the scheduler is required to select a feasible service vector $\vS[t] \in \cS.$ Then, the queues will evolve according to (\ref{eqn:Qevolve}) based on this scheduling decision. We make the following assumptions on the arrival processes and the set of feasible service rates.

\begin{assumption}[Assumptions for the Scheduling Problem]\label{as:scheduling:AandS}
We assume that the arrival processes to different queues are independent (not necessarily identically distributed). Also, we assume that each exogenous arrival process $\{A_l[t]\}_{t}$ is composed of a sequence of i.i.d. nonnegative-integer-valued and bounded random variables with $A_l[t]\leq A_{max} <\infty$ for all $l$ and all $t.$
We assume that each feasible service rate vector $\vS \in \cS$ is nonnegative-integer-valued and bounded with $S_l \leq S_{max}<\infty,$ for all $l.$
\end{assumption}

We additionally use the following notation for the mean and variance of different processes in the scheduling problem: for the exogenous arrival process for each queue $l$, $\lambda_l\triangleq\bE[A_l[1]]$,
$\sigma_l^2\triangleq var(A_l[1])$; for the service process of each queue $l$, $\mu_l \triangleq \bE[S_l[1]]$, $\nu^2_l\triangleq var(S_l[1])$; for the hypothetical total arrival process, defined as $A_\Sigma[t]\triangleq \sum_{l=1}^L A_l[t],$ we let $\lambda_\Sigma \triangleq \sum_{l=1}^L \lambda_l,$ $\sigma^2_\Sigma \triangleq \sum_{l=1}^L \sigma^2_l.$ Finally, we also introduce the boldface notation for the $L$-dimensional vector of mean and variances for the arrival and service processes: $\vlam \triangleq (\lambda_l)_l,$ and $\vsig^2 \triangleq (\sigma_l^2)_l;$ and $\vmu \triangleq (\mu_l)_l,$ and $\vnu^2 \triangleq (\nu_l^2)_l.$
Without
loss of generality\footnote{We can eliminate any arrival with
$\lambda_l=0$ from the system since $A_l[t]=0$ for all $t$ with
probability one for any such arrival.}, we can assume that $\lambda_l>0,$ $\forall l.$

In the rest of paper, we are interested in the performance of a well-known queue-length-based scheduling policy, called \emph{Maximum Weight} (MW) Scheduling Policy, defined next.

\begin{definition}[MW Scheduling Policy]\label{def:MWS}
For the scheduling problem introduced above (see Figure~\ref{fig:MWS_System}), in each time slot $t,$ the \emph{Maximum Weight (MW) Scheduling Policy} selects the service rate vector $\vS[t]$ from $\cS$ to optimize the following objective, breaking ties uniformly at random,
 \beqano
 \vS[t] &=& RAND\left\{\argmax_{\vS \in \cS} \La \vQ[t], \vS \Ra\right\}
 \eeqano
where $\La \cdot,\cdot \Ra$ is the vector inner product in $\bR^L$ defined in (\ref{eqn:RL_defs}).
\end{definition}

\subsection{Capacity Region for the Scheduling Problem}

\begin{definition}[Maximum Achievable Rate (Capacity) Region for the Scheduling Problem, $\cR$] \label{def:SchedulingRateRegionR}
For the scheduling problem described in Section~\ref{sec:model_Scheduling} operating under Assumption~\ref{as:scheduling:AandS} and the given set of feasible service rate vectors $\cS,$ the maximum achievable rate region $\cR$ is the convex hull\footnote{The convex hull of a set $\cS \subset \bR^L$ is the smallest convex set that contains $\cS.$} of $\cS,$ i.e.,
 $$ \cR \triangleq \textrm{Convex Hull}(\cS).$$
Under the assumed finite size and nonnegative nature of the set $\cS,$ the region $\cR$ becomes a polyhedron in the nonnegative quadrant of $\bR^L$, and hence can be equivalently described by
 \beqa
 \cR = \{\vr \geq \vZero: \La \vc\uk, \vr \Ra \leq b\uk, \> k=1,\cdots,K\}, \label{eqn:SchedulingRateRegionR}
 \eeqa
where $K$ denotes the finite (and minimal) number of hyperplanes that fully describe the polyhedron, and the pair $(\vc\uk,b\uk)$ defines the $k^{th}$ hyperplane, $\cH\uk,$ through its normal vector $\vc\uk \in \bR^L$ and its inner product value $b\uk \in \bR$ as $\cH\uk \triangleq \{\vr: \La \vc\uk,\vr \Ra = b\uk\}.$ Further, since the allowed feasible service rates are nonnegative-valued, the pairs $(\vc\uk,b\uk)_k,$ are assumed to satisfy:
 $$ \|\vc\uk\| = 1, \qquad \vc\uk \succeq \vZero,
                \qquad b\uk > 0, \qquad \textrm{for } k=1,\cdots, K.$$
Furthermore, we call the intersection of the $k^{th}$ hyperplane with $\cR$ the $k^{th}$ \emph{face} $\cF\uk$ of the achievable rate region, i.e., $\cF\uk \triangleq \cH\uk \cap \cR.$
\hfill

\end{definition}

Figures~\ref{fig:MWSRateRegion2D} and \ref{fig:MWSRateRegion3D} illustrate the above concepts for a $2$-dimensional and a $3$-dimensional capacity region.
\begin{figure}[!htbp]
\begin{minipage}{3.1in}
\begin{center}
\includegraphics[width=2.3in]{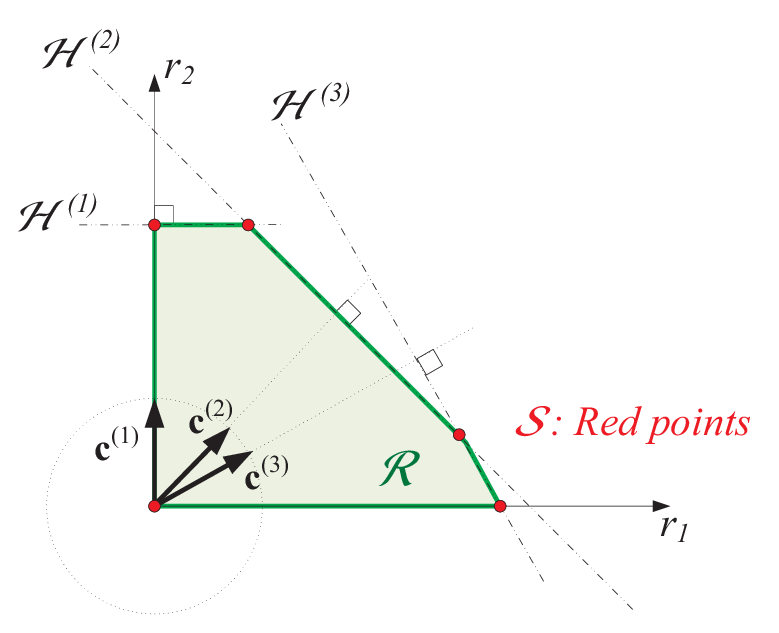}
\caption{The figure illustrates the capacity region for a $2$-dimensional example, indicating the hyperplanes $\cH\uk$ and the associated normal vectors $\vc\uk.$}
 \label{fig:MWSRateRegion2D}
 \end{center}
\end{minipage}
\hfill
\begin{minipage}{3.2in}
\begin{center}
\includegraphics[width=2.5in]{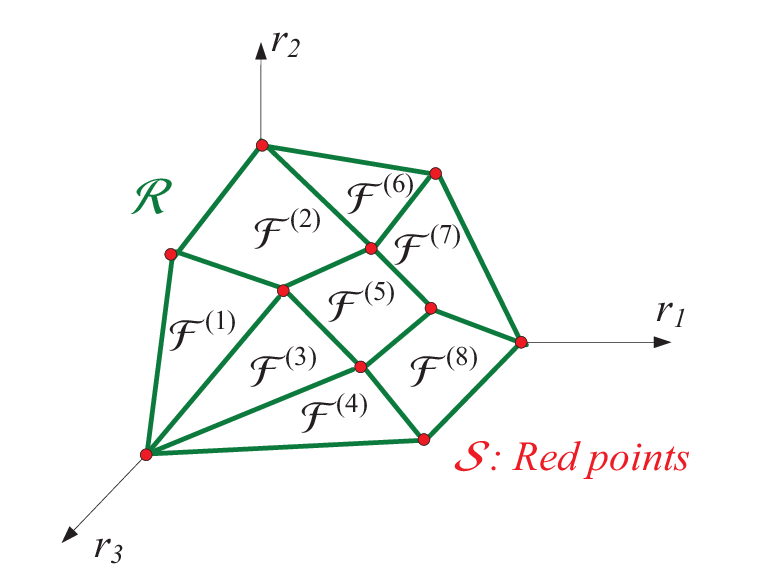}
\caption{The figure illustrates the capacity region for a $3$-dimensional example ($L=3$), indicating that each $\cF\uk$ is polyhedral in $\bR^{L-1}$ with a non-empty interior.}
 \label{fig:MWSRateRegion3D}
 \end{center}
\end{minipage}
\end{figure}
The next lemma states the well-known throughput-optimality of MW Scheduling, and shows that the steady-state moments of the queue length vector are bounded. As in the routing problem, the proof of the following lemma uses Lemma~\ref{lem:Hajek} and is omitted.

\begin{lemma} \label{lem:MWS_BddMoments}
If the arrival rate vector $\vlam \succeq \vZero$ lies outside the capacity region $\cR,$ i.e., $\vlam \notin \cR,$ then the queueing network cannot be stabilized by any feasible scheduling policy.

For any arrival rate vector $\vlam$ in the \emph{interior} of $\cR,$ i.e., $\vlam \in Int(\cR),$ the MW Scheduling Policy stabilizes the network in the following strong sense that the queue-length vector process $\{\vQ[t]\}_t$ converges in distribution to a random vector $\bvQ$ whose all moments are bounded, i.e., there exist finite numbers $\{M_r\}_{r=1,2,\cdots}$ such that $\bE[\|\bvQ\>\|^r] = M_r.$
\end{lemma}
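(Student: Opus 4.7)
My plan is to handle the two assertions separately. The negative result is a quick separating-hyperplane argument; the positive result is driven by Lemma~\ref{lem:Hajek} applied to $Z(\vQ) = \|\vQ\|$, with the quadratic $V(\vQ) = \|\vQ\|^2$ used as a scaffold.

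For the claim that $\vlam \notin \cR$ precludes stability, the polyhedral description \eqref{eqn:SchedulingRateRegionR} gives some index $k$ with $\La \vc\uk, \vlam \Ra > b\uk$. Since $\vc\uk \succeq \vZero$ and $\vU[t] \succeq \vZero$, the dynamics \eqref{eqn:Qevolve} imply $\La \vc\uk, \vQ[t+1] - \vQ[t] \Ra \geq \La \vc\uk, \vA[t] - \vS[t] \Ra$ pathwise for any feasible $\vS[t] \in \cS$. Using $\La \vc\uk, \vS[t] \Ra \leq b\uk$ and telescoping, $\bE[\La \vc\uk, \vQ[t]\Ra] \geq \La \vc\uk, \vQ[0] \Ra + t(\La \vc\uk,\vlam\Ra - b\uk) \to \infty$, ruling out any stationary distribution under any feasible policy.

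For the sufficiency direction, I would pick $\epsilon>0$ small enough that $\vlam + \epsilon \vOne \in \cR$; this is possible because $\vlam \in \text{Int}(\cR)$. Squaring \eqref{eqn:Qevolve} componentwise and using $Q_l[t+1]^2 \leq (Q_l[t] + A_l[t] - S_l[t])^2$ gives
\[
\bE[V(\vQ[t+1]) - V(\vQ[t]) \mid \vQ[t] = \vQ] \leq 2 \La \vQ, \vlam - \bE[\vS[t] \mid \vQ] \Ra + K_0,
\]
with $K_0$ depending only on $L$, $A_{max}$, $S_{max}$. Since MW selects $\vS[t]\in\argmax_{\vS\in\cS}\La\vQ,\vS\Ra$ and $\vlam+\epsilon\vOne \in \cR = \text{Convex Hull}(\cS)$, one has $\La\vQ,\bE[\vS[t]\mid\vQ]\Ra \geq \La\vQ,\vlam+\epsilon\vOne\Ra$, so the $V$-drift is at most $-2\epsilon\La\vQ,\vOne\Ra + K_0$.

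To apply Lemma~\ref{lem:Hajek} as stated, I switch to $Z(\vQ) = \|\vQ\|$. Condition (C2) is immediate since per-step increments of each $Q_l$ are bounded. For (C1), concavity of $\sqrt{\cdot}$ gives $\|\vQ[t+1]\| - \|\vQ\| \leq (\|\vQ[t+1]\|^2 - \|\vQ\|^2)/(2\|\vQ\|)$ for $\vQ\neq\vZero$; combining this with the $V$-drift bound and the elementary $\La\vQ,\vOne\Ra = \|\vQ\|_1 \geq \|\vQ\|_2$ yields $\bE[\|\vQ[t+1]\| - \|\vQ\| \mid \vQ[t]=\vQ] \leq -\epsilon + K_0/(2\|\vQ\|)$, which is at most $-\epsilon/2$ once $\|\vQ\|$ is large enough. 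Lemma~\ref{lem:Hajek} then produces $\theta^\star>0$ and $C^\star<\infty$ with $\limsup_t \bE[e^{\theta^\star\|\vQ[t]\|}] \leq C^\star$; positive recurrence follows from Foster's criterion applied to $Z$, and the conclusion of Lemma~\ref{lem:Hajek} hands over $\bE[e^{\theta^\star\|\bvQ\|}] \leq C^\star$, whence every polynomial moment $\bE[\|\bvQ\|^r]$ is finite by a Chernoff-type bound. The main technical obstacle is precisely this last passage from a quadratic-drift bound to the linear-drift bound required by Hajek's criterion; the concavity trick handles it, but crucially uses $\|\vQ\|_1 \geq \|\vQ\|_2$, which is specific to the nonnegative orthant.
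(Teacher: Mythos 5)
Your proof is correct and follows essentially the route the paper intends (and spells out for the JSQ counterpart in Appendix~\ref{sec:JSQ_BddMoments}): apply Lemma~\ref{lem:Hajek} to $Z(\vQ)=\|\vQ\|$, controlling its drift through the quadratic drift via concavity of the square root, with the MW property $\La \vQ,\bE[\vS\mid\vQ]\Ra=\max_{\vr\in\cR}\La \vQ,\vr\Ra\geq\La \vQ,\vlam+\epsilon\vOne\Ra$ supplying the negative term and $\|\vQ\|_1\geq\|\vQ\|$ converting it to the linear decrease Hajek's condition (C1) requires. One small tightening for the instability direction: divergence of $\bE[\La \vc\uk,\vQ[t]\Ra]$ from a fixed initial state does not by itself contradict positive recurrence (a positive recurrent chain may have infinite stationary mean), so either keep your pathwise inequality and apply the strong law of large numbers to the i.i.d.\ bounded arrivals to get $\La \vc\uk,\vQ[t]\Ra\to\infty$ almost surely, or use the bounded per-step increments to turn the linear mean growth into a probability bound contradicting tightness, before concluding that no stabilizing policy exists.
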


\section{Lower Bounds}
\label{sec:LBs}

We first consider a simple single-server queueing system, depicted in Figure~\ref{fig:LBqueue} and obtain a lower bound on the steady-state expected queue length in this system. This lower bound will then be used to obtain lower bounds for both the routing and scheduling problems.

\begin{figure}[!htbp]
\begin{center}
\includegraphics[width=2.2in]{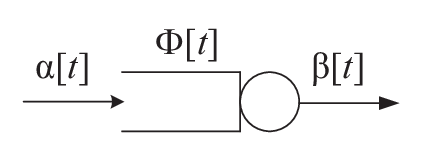}
\caption{Lower bounding system with i.i.d. arrival and service processes over time with bounded support.}
 \label{fig:LBqueue}
 \end{center}
\end{figure}
We assume that the arrival and service processes to the single-server queue are, respectively, described by two independent sequences of i.i.d. nonnegative-valued random variables $\{\alpha[t]\}_t$ and $\{\beta[t]\}_t.$ We assume that both distributions have finite support, i.e., there exist $\alpha_{max}<\infty$ and $\beta_{max}<\infty$ such that $\alpha[t]\leq \alpha_{max}$ and $\beta[t]\leq\beta_{max}$ with probability $1$ for all $t.$ We also denote the means and variances of the arrival and service processes as $\alpha\triangleq\bE[\alpha[1]],$ $\sigma_\alpha^2 \triangleq var(\alpha[1])$, and $\beta\triangleq\bE[\beta[1]],$ $\nu_{\beta}^2\triangleq var(\beta[1]).$ Then, the queue-length of the server $\Phi[t]$ evolves as
  \beqa
  \Phi[t+1] = \left(\Phi[t] + \alpha[t] - \beta[t]\right)^+, \qquad t\geq 0. \label{eqn:LB_Qevolve}
  \eeqa
We are now ready to provide a useful lower bound on the steady-state performance of this system.

\begin{lemma} \label{lem:LBs}
For the system of Figure~\ref{fig:LBqueue} with a given service process $\{\beta[t]\}_t$, consider the arrival process $\{\alpha\e[t]\}_t,$ parameterized by $\epsilon>0,$ with mean $\alpha\e$ satisfying $\epsilon = \beta-\alpha\e$, and with variance denoted as $\sigma_{\alpha\e}^2$. Let the queue-length process, denoted by $\{\Phi\e[t]\}_t$, evolve as in (\ref{eqn:LB_Qevolve}) with $\alpha[t]:=\alpha\e[t]$.

Then, $\{\Phi\e[t]\}_t$ is a positive Harris recurrent Markov Chain (\cite{meytwe93}) for any $\epsilon>0,$ and converges in distribution to a random variable $\bPhi\e$ with all bounded moments.

Moreover, the mean of $\bPhi\e$ can be lower-bounded  as
 \beqa
 \bE\left[\bPhi\e\right] &\geq& \frac{\zeta\e}{2\epsilon}- B_1, \label{eqn:LB_firstmoment}
 \eeqa
where $\zeta\e\triangleq \sigma_{\alpha}^2 + \nu_\beta^2 + \epsilon^2,$ and $B_1 \triangleq \frac{\beta_{max}}{2}.$

Therefore, in the \textbf{heavy-traffic limit} as the mean arrival rate approaches the mean service rate from below, i.e., as $\epsilon \downarrow 0,$ and assuming the variance $\sigma_{\alpha\e}^2$ converges to a constant $\sigma_\alpha^2$, the lower bounds become
 \beqa
 \liminf_{\epsilon \downarrow 0} \epsilon \bE\left[\bPhi\e\right] &\geq& \frac{\zeta}{2}, \label{eqn:LB_HT_firstmoment}
 \eeqa
 where $\zeta \triangleq \sigma_{\alpha}^2 + \nu_\beta^2.$
\end{lemma}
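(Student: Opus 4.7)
My plan is to establish the two claims in turn: first stability and bounded moments, then the Kingman-type lower bound via a squared-Lyapunov drift computation in steady state, and finally the heavy-traffic limit.

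\textbf{Stability and bounded moments.} For each fixed $\epsilon>0$, I would apply Lemma~\ref{lem:Hajek} with the Lyapunov function $Z(\Phi)=\Phi$. Because $U[t]=0$ whenever $\Phi[t]\geq\beta_{max}$, we have $\bE[\Delta Z\mid \Phi[t]=\phi]=\alpha\e-\beta=-\epsilon$ for all $\phi\geq\beta_{max}$, so condition (C1) holds with $\eta=\epsilon$ and $\kappa=\beta_{max}$; condition (C2) holds with $D=\max(\alpha_{max},\beta_{max})$ since one-step increments are bounded. Irreducibility and aperiodicity of $\{\Phi\e[t]\}_t$ (hence positive Harris recurrence once stability is shown) follow from standard arguments using the boundedness and integer-valuedness of the increments. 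Lemma~\ref{lem:Hajek} then delivers $\bE[e^{\theta^\star \bPhi\e}]\leq C^\star$, which in particular makes all polynomial moments of $\bPhi\e$ finite, so the steady-state moment computations below are justified.

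\textbf{Lower bound via squared drift.} The key is to rewrite the queueing recursion as $\Phi[t+1]=\Phi[t]+\alpha\e[t]-\beta[t]+U[t]$ with $U[t]\geq 0$ and to note the crucial identity $U[t]\bigl(\Phi[t]+\alpha\e[t]-\beta[t]\bigr)=-U[t]^2$, which holds because $U[t]>0$ forces $\Phi[t+1]=0$, i.e.\ $\Phi[t]+\alpha\e[t]-\beta[t]=-U[t]$. Squaring the recursion and using this identity yields
\beqano
\Phi[t+1]^2 \;=\; \bigl(\Phi[t]+\alpha\e[t]-\beta[t]\bigr)^2 \;-\; U[t]^2.
\eeqano
Taking expectations in steady state (where $\bE[\Phi[t+1]^2]=\bE[\Phi[t]^2]=\bE[\bPhi\e{}^2]$, finite by Step~1), and using that $\alpha\e[t],\beta[t]$ are independent of $\Phi[t]$, I get
\beqano
0 \;=\; -2\epsilon\,\bE[\bPhi\e] \;+\; \zeta\e \;-\; \bE[U^2],
\eeqano
since $\bE[(\alpha\e-\beta)^2]=\sigma_{\alpha\e}^2+\nu_\beta^2+\epsilon^2=\zeta\e$. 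Separately, the first-moment steady-state balance gives $\bE[U]=\beta-\alpha\e=\epsilon$, and the pathwise bound $U[t]\leq\beta_{max}$ gives $\bE[U^2]\leq\beta_{max}\bE[U]=\beta_{max}\,\epsilon$. Substituting and rearranging,
\beqano
\bE[\bPhi\e] \;\geq\; \frac{\zeta\e}{2\epsilon}\;-\;\frac{\beta_{max}}{2} \;=\; \frac{\zeta\e}{2\epsilon}-B_1,
\eeqano
which is (\ref{eqn:LB_firstmoment}).

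\textbf{Heavy-traffic limit.} Multiplying by $\epsilon$ gives $\epsilon\,\bE[\bPhi\e]\geq \tfrac{1}{2}\zeta\e-\epsilon B_1$. Under the hypothesis $\sigma_{\alpha\e}^2\to\sigma_\alpha^2$, we have $\zeta\e=\sigma_{\alpha\e}^2+\nu_\beta^2+\epsilon^2\to\zeta$, and $\epsilon B_1\to 0$, so taking $\liminf$ as $\epsilon\downarrow 0$ yields (\ref{eqn:LB_HT_firstmoment}).

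\textbf{Expected obstacle.} No single step is computationally hard; the only real subtlety is recognizing the cancellation $U[t](\Phi[t]+\alpha\e[t]-\beta[t])=-U[t]^2$, which is what turns the usual Kingman-style squaring argument from an upper bound on $\bE[\bPhi\e]$ into the matching lower bound once $\bE[U^2]$ is controlled by $\beta_{max}\,\bE[U]$. The only analytic care needed is to make sure all steady-state second moments are finite before equating $\bE[\Phi[t+1]^2]$ with $\bE[\Phi[t]^2]$, which is why Step~1 via Lemma~\ref{lem:Hajek} comes first.
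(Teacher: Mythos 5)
Your proposal is correct and follows essentially the same route as the paper: Hajek's lemma (Lemma~\ref{lem:Hajek}) for stability and finite moments, then setting the steady-state drift of the quadratic Lyapunov function to zero, using the identity $U[t](\Phi[t]+\alpha\e[t]-\beta[t])=-U[t]^2$ together with $\bE[U]=\epsilon$ and $U\leq\beta_{max}$ to control $\bE[U^2]$, and finally multiplying by $\epsilon$ for the heavy-traffic limit. No gaps.
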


\begin{proof}
In the following argument, we will omit the $\e$ superscript for ease of exposition, and revive it when necessary.
The claim that $\{\Phi[t]\}_t$ is positive Harris recurrent follows from standard negative drift conditions (\cite{meytwe93}). Then, Lemma~\ref{lem:Hajek} applies to the Lyapunov function $V(\Phi)\triangleq\|\Phi\| $ as in the proofs of Lemmas~\ref{lem:JSQ_BddMoments} and \ref{lem:MWS_BddMoments}, and is omitted here. These establish that $\Phi[t]$ converges in distribution to $\bPhi$ with all bounded moments, i.e., $\bE[\|\bPhi\|^r] < \infty$ for each $r=1,2,\cdots.$

To prove the lower bound (\ref{eqn:LB_firstmoment}), we first expand (\ref{eqn:LB_Qevolve}) as
\beqa
 \Phi[t+1] = \Phi[t] + \alpha[t] - \beta[t] + \chi[t], \qquad t\geq 0. \label{eqn:LB_Qevolve2}
\eeqa
where $\chi[t]$ denotes the unused service in slot $t.$
For the quadratic Lyapunov function $W(\Phi)\triangleq \|\Phi\|^2,$ the mean drift  $\Delta W(\Phi) \triangleq [W(\Phi[t+1])-W(\Phi[t])]\>\one(\Phi[t]=\Phi)$ in steady-state must be zero, i.e., $\bE[\Delta W(\bPhi)]=0.$ Next, we expand the conditional mean drift of $W$, omitting the time reference $[t]$ for brevity:
\beqano
\bE[\Delta W(\Phi) \> | \> \Phi[t]=\Phi]
    &=& \bE[ (\Phi+\alpha-\beta+\chi)^2-\Phi^2  \> | \> \Phi] \\
    &=& \bE[ (\Phi+\alpha-\beta)^2+2 (\Phi+\alpha-\beta) \chi
                + \chi^2 - \Phi^2  \> | \> \Phi] \\
    &=& \bE[(\alpha-\beta)^2\>|\>\Phi]
            + 2 \bE[(\alpha-\beta)\>|\>\Phi] \Phi
                - \bE[\chi^2\>|\>\Phi]
\eeqano
where the last step uses the fact that $\chi (\Phi+\alpha-\beta) = -\chi^2$ by definition, and the independence of the arrival and service processes from each other, and from $\Phi.$ Taking expectations of both sides with respect to the steady-state distribution of $\{\Phi[t]\}$ and using the fact that $\bE[\Delta W(\bPhi)]=0$ yields, after re-organizing and reviving the $\e$ notation,
\beqano
\epsilon \bE[\bPhi\e] &=& \frac{\bE[(\alpha\e-\beta)^2]}{2} - \frac{\bE[\chi(\bPhi\e)^2]}{2}\\
    &\geq& \frac{\zeta\e}{2\epsilon}- \frac{\epsilon  \>\beta_{max}}{2},
\eeqano
where the last step follows from simple manipulations of the first term, and from the facts that $\bE[\chi(\bPhi\e)]=\epsilon,$ and $\chi[t]\leq \beta_{max}$ for all $t.$
\hfill
\end{proof}

Next, we will discuss the implications of this lower bound on the routing and scheduling problems, respectively.

\subsection{Lower Bounds for the Routing Problem}
\label{sec:LB_Routing}

To lower bound the performance of any feasible routing policy, we assume \emph{resource pooling} whereby we consider a hypothetical single server that serves all exogenous arrivals with a service rate of $\{S_\Sigma[t]\}$ defined as the sum of the service processes of all servers in the actual system, i.e., $S_\Sigma \triangleq \sum_{l=1}^L S_l[t].$
This results in a single-server queueing system of Figure~\ref{fig:LBqueue} with the arrival process $\alpha[t] = A_\Sigma[t]$, for all $t$ with $\alpha_{max} = A_{max},$ and the service process $\beta[t] = S_\Sigma[t]$, for all $t$ with $\beta_{max} = L S_{max}.$ It is then easy to see that the corresponding queue-length process $\{\Phi[t]\}_t$ is stochastically smaller than the total queue-length process $\{\sum_{l=1}^L Q_l[t]\}_t$ of the original multi-server system in Figure~\ref{fig:JSQ_System} under any feasible routing policy. Thus, utilizing the lower bounds from Lemma~\ref{lem:LBs}, we next establish lower bounds on the performance of any routing policy (and hence JSQ) for all arrival rates.

\begin{lemma}\label{lem:Routing_LBs}
For the routing problem of Section~\ref{sec:model_Routing} with a given service vector process $\{\vS[t]\}_t,$ consider the exogenous arrival process $\{A_\Sigma\e[t]\}_t$, parameterized by $\epsilon>0,$ with mean $\lambda_\Sigma\e$ satisfying $\epsilon = \mu_\Sigma-\lambda_\Sigma\e$, and with variance denoted as $(\sigma_\Sigma\e)^2$. Accordingly, let the queue-length process under JSQ Routing with this arrival process be denoted as $\{\vQ\e[t]\}_t$, evolving as in (\ref{eqn:Qevolve}). Moreover, let $\bvQ\e$ denote the limiting random vector with all bounded moments that the process $\{\vQ\e[t]\}_t$ is known (by Lemma~\ref{lem:JSQ_BddMoments}) to converge to, for any $\epsilon>0.$

Then, the first moment of the sum length of $\bvQ\e$ can be lower-bounded, with notation $\zeta\e \triangleq (\sigma_\Sigma\e)^2 + \nu_\Sigma^2 + \epsilon^2,$ as:
 \beqa
 \bE\left[\sum_{l=1}^L \bQ_l\e\right] &\geq& \frac{\zeta\e}{2\epsilon}- B_1 , \label{eqn:LB_firstmoment2}
 \eeqa
where $B_1 \triangleq \frac{L S_{max}}{2}.$

Therefore, in the \textbf{heavy-traffic limit} as the mean arrival rate approaches the mean service rate from below, i.e., as $\epsilon \downarrow 0,$ and as the variance $(\sigma_\Sigma\e)^2$ converges to a constant $\sigma_\Sigma^2$, the lower bound becomes, with notation $\zeta \triangleq \sigma_{\Sigma}^2 + \nu_\Sigma^2,$
 \beqa
 \liminf_{\epsilon \downarrow 0} \epsilon \bE\left[\sum_{l=1}^L \bQ_l\e\right] &\geq& \frac{\zeta}{2}. \label{eqn:LB_HT_firstmoment1}
 \eeqa
\end{lemma}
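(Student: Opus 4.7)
The plan is to carry out the resource-pooling reduction sketched in the paragraph preceding the lemma: I compare the JSQ system, pathwise, to the single-server queue of Lemma~\ref{lem:LBs} with arrival process $\alpha[t]:=A_\Sigma\e[t]$ and service process $\beta[t]:=S_\Sigma[t]=\sum_{l=1}^L S_l[t]$, and then invoke Lemma~\ref{lem:LBs} on that queue. The hypotheses of Lemma~\ref{lem:LBs} are inherited from Assumption~\ref{as:routing:AandS}: both $\alpha[t]$ and $\beta[t]$ are i.i.d.\ in $t$, independent of each other, nonnegative-integer valued, and bounded (by $A_{\max}$ and $LS_{\max}$ respectively). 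Moreover $\bE[\beta]-\bE[\alpha]=\mu_\Sigma-\lambda_\Sigma\e=\epsilon$, and by independence of the $S_l$'s the variance of $\beta$ equals $\sum_l\nu_l^2=\nu_\Sigma^2$. Thus Lemma~\ref{lem:LBs} delivers $\bE[\bPhi\e]\geq \zeta\e/(2\epsilon)-B_1$ with exactly the constants that appear in (\ref{eqn:LB_firstmoment2}).

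The real work is the sample-path comparison $\Phi\e[t]\leq \sum_l Q_l\e[t]$. I would couple the two systems by realizing a common $\{A_\Sigma\e[t]\}$ and a common $\{(S_1[t],\dots,S_L[t])\}$, driving the multi-server JSQ system with these and driving the single-server queue with the same $A_\Sigma\e[t]$ and with $S_\Sigma[t]=\sum_l S_l[t]$. Starting both from empty, I would prove $D[t]:=\sum_l Q_l\e[t]-\Phi\e[t]\geq 0$ by induction on $t$. The inductive step, after subtracting (\ref{eqn:LB_Qevolve2}) from the sum of (\ref{eqn:Qevolve}) over $l$, reduces to the single inequality
\[
\chi[t] \leq \sum_{l=1}^L U_l[t] + D[t],
\]
where $\chi[t]$ and $U_l[t]$ are the unused-service variables of the single-server and multi-server systems. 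When $\chi[t]=0$ this is immediate. When $\chi[t]>0$, one has $\chi[t]=S_\Sigma[t]-\Phi\e[t]-A_\Sigma\e[t]$, and the inequality collapses to $\sum_l(S_l[t]-A_l[t]-Q_l\e[t])\leq \sum_l U_l[t]$, which holds termwise because $U_l[t]$ is defined as the positive part of its argument and $A_\Sigma\e[t]=\sum_l A_l[t]$ under any feasible routing (in particular JSQ). This is the only point that actually uses the feasibility of the routing policy, and I expect it to be the main (indeed only) technical obstacle in the proof.

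To finish, I take expectations in the pathwise dominance and pass to $t\to\infty$: uniform integrability is ensured by the bounded first moments guaranteed by Lemma~\ref{lem:JSQ_BddMoments} for $\sum_l Q_l\e[t]$ and by Lemma~\ref{lem:LBs} for $\Phi\e[t]$. This yields $\bE[\sum_l\bQ_l\e]\geq\bE[\bPhi\e]\geq\zeta\e/(2\epsilon)-B_1$, i.e.~(\ref{eqn:LB_firstmoment2}). The heavy-traffic statement (\ref{eqn:LB_HT_firstmoment1}) then follows by multiplying by $\epsilon$, letting $\epsilon\downarrow 0$, and using $(\sigma_\Sigma\e)^2\to\sigma_\Sigma^2$ and $\epsilon^2\to 0$; once the coupling inequality above is in place the remainder is routine bookkeeping.
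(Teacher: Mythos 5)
Your proposal is correct and follows essentially the same route as the paper: construct the resource-pooled single-server queue with arrivals $A_\Sigma\e[t]$ and service $S_\Sigma[t]$, establish that its queue length is dominated pathwise (hence stochastically) by $\sum_l Q_l\e[t]$ under any feasible routing, and then invoke Lemma~\ref{lem:LBs} with $\sigma_\alpha^2=(\sigma_\Sigma\e)^2$, $\nu_\beta^2=\nu_\Sigma^2$, $\beta_{max}=LS_{max}$. The paper leaves the coupling/induction step implicit ("it is easy to see"), and your inductive verification of $\chi[t]\leq D[t]+\sum_l U_l[t]$ supplies exactly that detail correctly.
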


\subsection{Lower Bounds for the Scheduling Problem}
\label{sec:LB_Scheduling}

Unlike the routing problem, where there is a single natural lower bound on the total queue-length in the system, in the case of scheduling, the polyhedral nature of the capacity region $\cR$ (see Definition~\ref{def:SchedulingRateRegionR}), implies that there are many possible lower bounds.

From Definition~\ref{def:SchedulingRateRegionR}, recall that the capacity region is bounded by $K$ hyperplanes, each hyperplane $\cH\uk$ described by its normal vector $\vc\uk$ and the value $b\uk.$ Throughout the rest of the paper, we will consider a particular face $\cF\uk$ and a point $\vlam\uk \in Relint(\cF\uk),$ where $Relint(\cF\uk)$ is the \emph{relative interior} of the polyhedral set $\cF\uk$.  Consider an arrival rate vector $\vlam\e$ parameterized by $\epsilon>0$ such that
 \beqa
  \vlam\e &\triangleq& \vlam\uk - \epsilon \vc\uk, \label{eqn:projection_k}
  \eeqa
so that $\vlam\e \in Int(\cR).$ In other words, $\vlam\e$ is a stabilizable rate in the capacity region that is at a distance $\epsilon$ away from the $k^{th}$ face $\cF\uk.$ In the remainder of this section, we will derive a lower bound on the steady-state weighted queue-length $\La \vc\uk, \vQ[t]\Ra$ as a function of $\epsilon.$ To that end, we construct the following hypothetical single-server queue associated with the $k^{th}$ face: following the notation in Figure~\ref{fig:LBqueue} with the superscript $\uk$ to indicate the $k^{th}$ face, we set $\alpha\uk[t] = \La \vc\uk, \vA[t] \Ra,$ where $\vA[t]$ is the vector of arrivals to the $L$ queues in the scheduling problem, and $\beta\uk[t] = b\uk,$ where $b\uk$ is the positive constant defining the value of the $k^{th}$ hyperplane.
Accordingly, the single server queue-length $\Phi\uk[t]$ evolves as in (\ref{eqn:LB_Qevolve}) with these arrival and service process mappings.
Then, $\{\Phi\uk[t]\}_t$ is stochastically smaller than the queue-length process $\{\La \vc\uk, \vQ[t] \Ra\}_t$ under any feasible (and hence MW) scheduling policy. Hence, utilizing Lemma~\ref{lem:LBs}, we can find the following lower bounds on the moments of the limiting queue-length vector under MW Scheduling.

\begin{lemma}\label{lem:Scheduling_LBs}
For the scheduling problem of Section~\ref{sec:model_Scheduling} with a given set of feasible schedules $\cS,$ consider the exogenous arrival vector process $\{\vA\uve[t]\}_t$ with mean vector $\vlam\uve \in Int(\cR)$ defined in (\ref{eqn:projection_k}) and with variance vector denoted as $(\vsig\uve)^2\triangleq \left((\sigma_l\uve)^2\right)_{l=1}^L$. Let the queue-length process under MW Scheduling with this arrival process be denoted as $\{\vQ\uve[t]\}_t$, evolving as in (\ref{eqn:Qevolve}). Moreover, let $\bvQ\uve$ denote a random vector with the same distribution as the steady-state distribution of $\{\vQ\uve[t]\}_t$ (by Lemma~\ref{lem:MWS_BddMoments}).

Then, for the given $k,$ and with $\zeta\uvek \triangleq \La (\vc\uk)^2,(\vsig\uve)^2\Ra + (\euk)^2 = \dsp \sum_{l=1}^L (c_l\uk)^2 (\sigma_l\uve)^2+ (\euk)^2 ,$
 \beqa
 \bE\left[\La \vc\uk, \bvQ\uve \Ra\right] &\geq& \frac{\zeta\uvek}{2\euk}- B_1\uk \label{eqn:Scheduling_LB_firstmoment}
 \eeqa
where $B_1\uk \triangleq \frac{b\uk}{2}.$

Further, consider the \textbf{heavy-traffic limit} $\euk\downarrow 0;$ and suppose that the variance vector $(\vsig\uve)^2$ approaches a constant vector $\vsig^2.$ Then, defining $\zeta\uk \triangleq \La (\vc\uk)^2,\vsig^2\Ra,$ we have
 \beqa
 \liminf_{\euk\downarrow 0}
    \euk\bE\left[\La \vc\uk, \bvQ\uve \Ra\right]
        &\geq& \frac{\zeta\uk}{2}
            \label{eqn:Scheduling_LB_HT_firstmoment}
 \eeqa

\end{lemma}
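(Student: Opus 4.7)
The plan is to reduce the claim, hyperplane by hyperplane, to the single-server lower bound in Lemma~\ref{lem:LBs}. For each fixed $k \in \{1,\ldots,K\}$, I would construct the hypothetical single-server queue described just before the lemma statement: arrivals $\alpha\uk[t] = \La\vc\uk,\vA\uve[t]\Ra$, deterministic service $\beta\uk[t] = b\uk$, and queue-length process $\{\Phi\uvek[t]\}_t$ evolving as in (\ref{eqn:LB_Qevolve}), with steady-state limit $\bPhi\uvek$. I then check that (i) the parameters of this system match those needed to invoke Lemma~\ref{lem:LBs} and deliver the stated constants, and (ii) its steady-state mean is dominated by $\bE[\La\vc\uk,\bvQ\uve\Ra]$.

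For the parameter matching, since $\|\vc\uk\|=1$ and $\vlam\uk=\vlam\uve+\epsilon\uk\vc\uk$ lies on $\cH\uk$, I have $\La\vc\uk,\vlam\uve\Ra = b\uk-\epsilon\uk$, so the negative drift of the one-dimensional queue is precisely $\epsilon\uk$. By independence of the per-queue arrivals, the variance of $\alpha\uk[t]$ is $\sum_l (c_l\uk)^2(\sigma_l\uve)^2 = \zeta\uvek$. Since the service is deterministic ($\nu_\beta^2=0$) and $\beta_{\max}=b\uk$, Lemma~\ref{lem:LBs} yields $\bE[\bPhi\uvek] \geq \zeta\uvek/(2\epsilon\uk) + \epsilon\uk/2 - b\uk/2$, which is at least $\zeta\uvek/(2\epsilon\uk) - B_1\uk$.

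The core step is pathwise domination of $\Phi\uvek[t]$ by $\La\vc\uk,\vQ\uve[t]\Ra$ under the natural coupling that feeds both systems with the same arrival realizations. Because $\vc\uk \succeq \vZero$, I can pass $(\cdot)^+$ through the inner product componentwise:
\begin{equation*}
\La\vc\uk,\vQ\uve[t+1]\Ra = \sum_l c_l\uk (Q_l\uve[t]+A_l\uve[t]-S_l[t])^+ \geq \La\vc\uk,\vQ\uve[t]\Ra + \alpha\uk[t] - \La\vc\uk,\vS[t]\Ra.
\end{equation*}
Since $\vS[t]\in\cS\subset\cR$ forces $\La\vc\uk,\vS[t]\Ra \leq b\uk$, combining with the trivial $\La\vc\uk,\vQ\uve[t+1]\Ra \geq 0$ gives
\begin{equation*}
\La\vc\uk,\vQ\uve[t+1]\Ra \geq \bigl(\La\vc\uk,\vQ\uve[t]\Ra + \alpha\uk[t] - b\uk\bigr)^+,
\end{equation*}
which is exactly the recursion defining $\Phi\uvek$. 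Starting both chains from zero and inducting on $t$ gives $\La\vc\uk,\vQ\uve[t]\Ra \geq \Phi\uvek[t]$ pathwise; passing to steady state (via Lemma~\ref{lem:MWS_BddMoments} and Lemma~\ref{lem:LBs}) and combining with the bound from the previous paragraph yields (\ref{eqn:Scheduling_LB_firstmoment}).

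The heavy-traffic statement (\ref{eqn:Scheduling_LB_HT_firstmoment}) then follows for $k \in \cK_{\vlam\uve}$ by multiplying (\ref{eqn:Scheduling_LB_firstmoment}) by $\epsilon\uk$, taking $\liminf$ as $\epsilon\uk \downarrow 0$, and using $\zeta\uvek \to \zeta\uk$ together with $\epsilon\uk B_1\uk \to 0$; the restriction to $k \in \cK_{\vlam\uve}$ is what ensures that $\epsilon\uk \downarrow 0$ is compatible with keeping $\vlam\uve \in \textrm{Int}(\cR)$. The only step with any substance is the pathwise domination, and even that is clean because $\vc\uk \succeq \vZero$ lets $(\cdot)^+$ pass through the inner product componentwise; the rest is a direct plug-in of parameters into Lemma~\ref{lem:LBs}.
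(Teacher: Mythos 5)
Your proposal is correct and follows essentially the same route as the paper: it constructs, for each hyperplane, the single-server system with arrivals $\La \vc\uk,\vA\uve[t]\Ra$ and constant service $b\uk$, invokes the dominance of $\La \vc\uk,\vQ\uve[t]\Ra$ over that queue, and plugs the parameters (drift $\epsilon\uk$, variance $\zeta\uvek$, $\nu_\beta^2=0$, $\beta_{max}=b\uk$) into Lemma~\ref{lem:LBs}. The only difference is that you explicitly verify the pathwise coupling argument behind the stochastic dominance, which the paper merely asserts; this is a welcome addition and is carried out correctly.
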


\section{State-Space Collapse}
\label{sec:Collapse}

In this section, we will show a moment condition which is the equivalent of state-space collapse in prior literature of fluid and diffusion limit analysis of the routing and scheduling problems. Specifically, we will show that the steady-state queue-length vector concentrates around a line within the $L$-dimensional state space in the following sense: the deviations from the line are bounded, independent of heavy-traffic parameter $\epsilon.$ Since the lower bounds on the queue lengths are of the order of $1/\epsilon,$ this would then show that the deviations from the line are small compared to the queue lengths themselves.
To establish state-space collapse, in both problems, we will first identify a vector and show that the queue lengths concentrate along this vector.
For this purpose, we first introduce some notation.

Let $\vc \succeq \vZero$ be a vector with unit norm. The components of the queue length vector $\vQ$ parallel and perpendicular to $\vc$ are, respectively, given by:
 \beqano
 \vQ_\parallel &\triangleq& \La \vc,\vQ \Ra \> \vc, \\
 \vQ_\perp &\triangleq& \vQ - \vQ_\parallel \> = \> \left[ Q_l - \La \vc,\vQ \Ra \> c_l\right]_{l=1}^L.
 \eeqano
If $\vc$ represents the direction along which state-space collapse occurs, then we study the statistics of $\vQ_\perp$ to show that all its moments are bounded by constants that do not depend on the proximity of the arrival rate vector to the boundary of the capacity region. To that end, we utilize Lemma~\ref{lem:Hajek} by studying the drift of the Lyapunov function $V_\perp(\vQ)=\|\vQ_\perp\|$ to show that: (i) the drift of this Lyapunov function is always bounded; (ii) and that, when $\|\vQ_\perp\|$ value is sufficiently large, it has a strictly negative drift \emph{whose value does not depend on the distance of the arrival rate vector from the boundary of the capacity region}, denoted as $\epsilon$ and $\veps$ in Sections~\ref{sec:LB_Routing} and \ref{sec:LB_Scheduling}. This latter property will be important in establishing the heavy-traffic optimality of the schemes. It turns out that studying of the drift of $\|\vQ_\perp\|$ is hard; instead, as we will see later, it is easier to study the drift of $\|\vQ\|^2$ and $\|\vQ_\parallel\|^2$. The following lemma serves as a useful preliminary step which relates the drift of these three quantities; the proof is provided in Appendix~\ref{sec:CollapseFirstSteps}. We will use this result in deriving our state-space collapse results later.

\begin{lemma}\label{lem:CollapseFirstSteps}
Consider the generic queueing system described in Section~\ref{sec:model} with queues evolving according to (\ref{eqn:Qevolve}), where the arrival and service processes are respectively bounded by $A_{max}$ and $S_{max}$ for each link and each time slot. Define the following Lyapunov functions:
 \beqa
 V_\perp(\vQ)\triangleq \|\vQ_\perp\|, \quad W(\vQ)\triangleq \|\vQ\|^2, \quad \textrm{and} \quad W_\parallel(\vQ)\triangleq \|\vQ_\parallel\|^2,
 \label{eqn:LyapDefs}
 \eeqa
with the corresponding single-step drifts denoted by:
 \beqa
 \Delta V_\perp(\vQ) &\triangleq& [V_\perp(\vQ[t+1])-V_\perp(\vQ[t])]\>\one(\vQ[t]=\vQ), \label{eqn:LyapDriftDefV_perp}\\
 \Delta W(\vQ) &\triangleq& [W(\vQ[t+1])-W(\vQ[t])]\>\one(\vQ[t]=\vQ),\label{eqn:LyapDriftDefW}\\
 \Delta W_\parallel(\vQ) &\triangleq& [W_\parallel(\vQ[t+1])-W_\parallel(\vQ[t])]\>\one(\vQ[t]=\vQ).\label{eqn:LyapDriftDefW_par}
 \eeqa
Then, the following properties hold for the random variable $\Delta V_\perp(\vQ)$:
\begin{enumerate}
\item We can bound the drift of $V_\perp$ with the drifts of $W$ and $W_\parallel$ as follows:
 \beqa
 \Delta V_\perp(\vQ)&\leq& \frac{1}{2 \|\vQ_\perp\|} \left(\Delta W(\vQ)-\Delta W_\parallel(\vQ)\right), \qquad \textrm{for all} \quad \vQ \in \bR_+^L. \label{eqn:VperpDriftBound}
 \eeqa
\item The drift of $V_\perp$ is absolutely bounded as:
 \beqa
 |\Delta V_\perp(\vQ) | &\leq & 2 \sqrt{L} \max(A_{max},S_{max}),  \qquad \textrm{for all} \quad \vQ \in \bR_+^L,
 \label{eqn:VperpMaxBound}
 \eeqa
where, recall from Section~\ref{sec:model} that, $A_{max}$ and $S_{max}$ respectively bounds the number of arrivals to and departures from a queue, and $L$ is the number of queues in the network.
\end{enumerate}
\end{lemma}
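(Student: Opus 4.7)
The plan rests on one geometric identity: since $\vQ_\parallel \perp \vQ_\perp$, the Pythagorean relation (\ref{eqn:Pythagorean}) gives $\|\vQ\|^2 = \|\vQ_\parallel\|^2 + \|\vQ_\perp\|^2$, i.e.\ $V_\perp(\vQ)^2 = W(\vQ) - W_\parallel(\vQ)$. This converts the drift of the awkward quantity $\|\vQ_\perp\|$ into something controllable by the drifts of two quadratic functions whose one-step evolutions are directly expressible in terms of the queue update (\ref{eqn:Qevolve}).

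For the first claim, I would fix a state $\vQ$ with $\|\vQ_\perp\| > 0$ (in the degenerate case $\|\vQ_\perp\|=0$ the right-hand side of (\ref{eqn:VperpDriftBound}) is interpreted as $+\infty$ and the bound holds trivially) and write
\[
V_\perp(\vQ[t+1]) - V_\perp(\vQ[t]) = \sqrt{W(\vQ[t+1])-W_\parallel(\vQ[t+1])} - \sqrt{W(\vQ[t])-W_\parallel(\vQ[t])}.
\]
Because $f(x)=\sqrt{x}$ is concave on $\bR_+$, the tangent-line inequality $\sqrt{a}-\sqrt{b}\leq (a-b)/(2\sqrt{b})$, valid for $a\geq 0$ and $b>0$, applied with $b = \|\vQ_\perp\|^2$ (and then multiplied by $\one(\vQ[t]=\vQ)$) delivers exactly (\ref{eqn:VperpDriftBound}).

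For the second claim, I would combine the reverse triangle inequality with the non-expansiveness of orthogonal projection. Because $\vx \mapsto \vx_\perp = \vx - \La \vc,\vx\Ra \vc$ is a linear orthogonal projection,
\[
|V_\perp(\vQ[t+1])-V_\perp(\vQ[t])| \leq \|\vQ_\perp[t+1] - \vQ_\perp[t]\| = \|(\vQ[t+1]-\vQ[t])_\perp\| \leq \|\vQ[t+1]-\vQ[t]\|.
\]
From $Q_l[t+1]-Q_l[t]=A_l[t]-S_l[t]+U_l[t]$ with $0\leq A_l[t]\leq A_{max}$ and $0\leq U_l[t]\leq S_l[t]\leq S_{max}$, we get $|Q_l[t+1]-Q_l[t]|\leq 2\max(A_{max},S_{max})$ componentwise, hence $\|\vQ[t+1]-\vQ[t]\|\leq 2\sqrt{L}\max(A_{max},S_{max})$, which together with the previous display yields (\ref{eqn:VperpMaxBound}). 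Both parts are elementary once the identity $V_\perp^2 = W - W_\parallel$ is in hand, so I do not expect any serious obstacle beyond carefully handling the boundary case $\|\vQ_\perp\|=0$ in part 1.
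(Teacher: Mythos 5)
Your proposal is correct and follows essentially the same route as the paper: part 1 is the identical argument (Pythagorean identity $\|\vQ\|^2=\|\vQ_\parallel\|^2+\|\vQ_\perp\|^2$ plus the concavity/tangent-line bound for $\sqrt{x}$), and part 2 uses the same reverse-triangle-plus-non-expansiveness idea, differing only in bookkeeping. The paper splits $\vQ_\perp[t+1]-\vQ_\perp[t]$ via the triangle inequality and the non-expansiveness of the parallel projection (earning the factor $2$ there) and then uses the per-component bound $\max(A_{max},S_{max})$, whereas you apply linearity and non-expansiveness of the perpendicular projection directly and absorb the factor $2$ into the looser per-component bound $A_{max}+S_{max}\leq 2\max(A_{max},S_{max})$; both land on the same constant, and your explicit treatment of the degenerate case $\|\vQ_\perp\|=0$ is a harmless refinement.
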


Notice that (\ref{eqn:VperpMaxBound}) verifies Condition $(C2)$ of Lemma~\ref{lem:Hajek} with
$$D=2 \sqrt{L} \max(A_{max},S_{max})$$
for the Lyapunov function $V_\perp(\vQ).$ To conclude that $V_\perp(\vQ[t])$ converges to $\|\bvQ_\perp\|$ with finite moments, we also need to verify condition $(C1)$ of Lemma~\ref{lem:Hajek}. In the following subsections, we do so for both the JSQ Routing and MW Scheduling policies. Further, we will show that the moments bounds on $\|\bvQ_\perp\|$ are independent of the proximity of the arrival rate vectors to the boundary of the capacity regions -- a key feature to be used for the proof of heavy-traffic optimality of these policies in Section~\ref{sec:UBs}.

\subsection{State-Space Collapse for the Routing Problem under the JSQ Policy}
\label{sec:Collapse_JSQ}

We recall the model introduced in Lemma~\ref{lem:Routing_LBs} for the routing problem, where the exogenous arrival process $\{A_\Sigma\e\}_t$ is parameterized by a positive valued $\epsilon \triangleq \mu_\Sigma - \lambda\e_\Sigma,$ and where the queue-length process and the limiting queue-length random vector under the JSQ Router are respectively denoted by $\{\vQ\e[t]\}_t$ and $\bvQ\e.$ Recall that the JSQ policy attempts to equalize the queue lengths, so one would expect the state-space to collapse along the direction of a unit vector, all of whose components are equal, i.e., the vector $\vc \triangleq \frac{\vOne}{\sqrt{L}} = (\frac{1}{\sqrt{L}})_{l=1}^L.$ Then, the projection and the perpendicular of any given $\vQ\e \in \bR_+^L$ with respect to this line become:
 \beqano
 \vQ_\parallel\e \> \triangleq \> \frac{Q_\Sigma\e}{L} \vOne, \qquad \textrm{and} \qquad
 \vQ_\perp\e \> \triangleq \> \left[Q_l-\frac{1}{L}Q_\Sigma\e\right]_{l=1}^L,
 \eeqano
where $Q_\Sigma\e \triangleq \sum_{l=1}^L Q_l\e$ denotes the total number of packets in the network. We already know, from Lemma~\ref{lem:JSQ_BddMoments}, that $\vQ\e[t]$ converges to $\bvQ\e$ in distribution for any $\epsilon>0$. Due to continuous mapping theorem, this implies that $\vQ_\parallel\e[t]$ also converges in distribution to $\bvQ_\parallel\e.$ The following result establishes the boundedness of all moments of the limiting $\bvQ_\perp\e$ uniformly for all $\epsilon>0.$

\begin{proposition}\label{prop:JSQ_Qperp_boundedness}
Consider the limiting queueing process $\bvQ\e$ under JSQ Routing, serving the exogenous arrival process $\{A_\Sigma\e\}_t$ parameterized by $\epsilon \triangleq \mu_\Sigma - \lambda\e_\Sigma.$
Then, for any choice of $\delta \in (0,\mu_{min})$, where
$\mu_{min}\triangleq \min_l \mu_l$, there exists a sequence of
finite numbers $\{N_r\}_{r=1,2,\cdots}$ such that
$\bE\left[\|\bvQ_\perp\e\|^r\right] \leq N_r,$ for all $\epsilon \in
(0,(\mu_{min}-\delta)L)$ and for each $r=1,2,\cdots.$
\end{proposition}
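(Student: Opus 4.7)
The strategy is to apply Hajek's lemma (Lemma~\ref{lem:Hajek}) to the Lyapunov function $V_\perp(\vQ)=\|\vQ_\perp\|$. Condition (C2) is already supplied by Lemma~\ref{lem:CollapseFirstSteps} with the explicit bound $D=2\sqrt{L}\max(A_{max},S_{max})$, independent of $\epsilon$. The entire content of the proposition therefore reduces to verifying (C1) with a drift rate $\eta>0$ and threshold $\kappa$ that are also independent of $\epsilon$. Once this is done, Lemma~\ref{lem:Hajek} yields $\bE\bigl[e^{\theta^\star\|\bvQ_\perp\e\|}\bigr]\le C^\star$ uniformly in $\epsilon\in(0,\mu_{min}\sqrt{L}-\delta)$, and the polynomial moment bounds $N_r$ then follow immediately from the elementary inequality $x^r\le (r!/(\theta^\star)^r)\,e^{\theta^\star x}$.

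To establish (C1), I would invoke the inequality $\Delta V_\perp\le(\Delta W-\Delta W_\parallel)/(2\|\vQ_\perp\|)$ of Lemma~\ref{lem:CollapseFirstSteps}, which trades the awkward drift of a norm for drifts of two quadratic Lyapunov functions. Expanding $\Delta W$ and $\Delta W_\parallel$ via the update (\ref{eqn:Qevolve}) and the complementary-slackness identity $U_l(Q_l+A_l-S_l+U_l)=0$ cancels the parallel components and gives
\beqano
\Delta W - \Delta W_\parallel = 2\La \vQ_\perp,\vA-\vS\Ra - \tfrac{2Q_\Sigma U_\Sigma}{L} + R,
\eeqano
where $R$ gathers second-order fluctuation terms absolutely bounded by some $K_1=K_1(L,A_{max},S_{max})$. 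Under JSQ every arrival is routed to an argmin queue, so $\La\vQ,\vA\Ra = A_\Sigma\min_l Q_l$, giving $\bE[\La\vQ_\perp,\vA\Ra\mid\vQ]=\lambda_\Sigma\e(Q_{min}-\bar Q)$ with $\bar Q = Q_\Sigma/L$; and $\bE[\La\vQ_\perp,\vS\Ra\mid\vQ]=\La\vQ_\perp,\vmu-\mu_{min}\vOne\Ra$ using $\vQ_\perp\perp\vOne$. The decisive geometric estimate is $\bar Q-Q_{min}\ge c_L\|\vQ_\perp\|$ for a constant $c_L>0$ depending only on $L$, which follows from $\sum_l(Q_l-\bar Q)=0$ forcing $\max_l(Q_l-\bar Q)\le(L-1)(\bar Q-Q_{min})$, whence $\|\vQ_\perp\|^2\le L(L-1)(\bar Q-Q_{min})^2$. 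Combining these pieces, dropping the nonpositive $-2Q_\Sigma U_\Sigma/L$, and bounding $|\La\vQ_\perp,\vmu-\mu_{min}\vOne\Ra|\le\|\vmu-\mu_{min}\vOne\|\,\|\vQ_\perp\|$ via Cauchy--Schwarz yields
\beqano
\bE[\Delta W-\Delta W_\parallel\mid\vQ]\le -2\bigl(c_L\lambda_\Sigma\e - \|\vmu-\mu_{min}\vOne\|\bigr)\|\vQ_\perp\|+K_1.
\eeqano
Dividing by $2\|\vQ_\perp\|$ and taking $\|\vQ_\perp\|$ large enough to absorb $K_1$ then yields (C1) with constants independent of $\epsilon$, provided the bracketed coefficient can be kept uniformly positive in $\epsilon$.

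\textbf{Expected obstacle.} The subtle step is establishing a \emph{uniform-in-$\epsilon$} positive lower bound on the bracketed coefficient $c_L\lambda_\Sigma\e-\|\vmu-\mu_{min}\vOne\|$. In the symmetric-service case $\vmu=\mu\vOne$ the asymmetry term vanishes and positivity is automatic. In general, the hypothesis $\epsilon<\mu_{min}\sqrt{L}-\delta$ is precisely what keeps $\lambda_\Sigma\e=\mu_\Sigma-\epsilon$ bounded below by a $\delta$-dependent positive constant, which is the regime in which JSQ's equalizing drift $c_L\lambda_\Sigma\e\|\vQ_\perp\|$ can be played against the differential-service-rate correction $\|\vmu-\mu_{min}\vOne\|\,\|\vQ_\perp\|$ to produce true state-space collapse along $\vOne/\sqrt{L}$. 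Beyond this positivity step, the remainder of the argument is mechanical drift bookkeeping and a direct application of Lemma~\ref{lem:Hajek}.
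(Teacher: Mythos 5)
Your overall architecture is the same as the paper's: Hajek's lemma applied to $V_\perp$, condition (C2) supplied by Lemma~\ref{lem:CollapseFirstSteps}, and (C1) via the bound $\Delta V_\perp\le(\Delta W-\Delta W_\parallel)/(2\|\vQ_\perp\|)$; your decomposition of $\Delta W-\Delta W_\parallel$ and the dropping of the unused-service term are fine. The genuine gap is the key drift estimate. You bound the arrival contribution by $-c_L\lambda_\Sigma\e\|\vQ_\perp\|$ using $\bar Q-Q_{min}\ge c_L\|\vQ_\perp\|$ with $c_L=1/\sqrt{L(L-1)}$, and the service contribution by Cauchy--Schwarz as $\|\vmu-\mu_{min}\vOne\|\,\|\vQ_\perp\|$, and then you need $c_L\lambda_\Sigma\e-\|\vmu-\mu_{min}\vOne\|>0$ uniformly in $\epsilon$. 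Your closing paragraph asserts that the hypothesis $\epsilon<\mu_{min}\sqrt{L}-\delta$ delivers this positivity, but it does not: that hypothesis only keeps $\lambda_\Sigma\e=\mu_\Sigma-\epsilon$ close to $\mu_\Sigma$, and says nothing about the size of $\|\vmu-\mu_{min}\vOne\|$ relative to $c_L\mu_\Sigma$. Concretely, take $L=2$, $\mu_1=1$, $\mu_2=100$: then $c_L\lambda_\Sigma\e<\mu_\Sigma/\sqrt{2}\approx 71.4$ while $\|\vmu-\mu_{min}\vOne\|=99$, so your bracketed coefficient is negative for every admissible $\epsilon$, even though state-space collapse does hold for this system (the true conditional drift is negative; the direction of $\vQ_\perp$ that makes your geometric constant $c_L$ tight and the direction that saturates Cauchy--Schwarz with the bad sign are different, so bounding the two terms separately in the worst case destroys the sign). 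As written, (C1) is therefore not established for heterogeneous $\vmu$, and the step cannot be fixed by tuning constants within your scheme.

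The paper avoids this by never separating the service rates from the arrivals at the aggregate level. It introduces a hypothetical per-queue rate vector $\vlam\triangleq\vmu-\frac{\epsilon}{\sqrt{L}}\vc=\vmu-\frac{\epsilon}{L}\vOne$ (so $\sum_l\lambda_l=\lambda_\Sigma\e$) and writes the conditional cross-term drift as $-\sum_l\lambda_l(Q_l-Q_{min})-\frac{\epsilon}{\sqrt{L}}\|\vQ_\parallel\|$. Since every summand $\lambda_l(Q_l-Q_{min})$ is nonnegative, it is bounded below by $\lambda_{min}\|\vQ-Q_{min}\vOne\|_1\ge\lambda_{min}\|\vQ-Q_{min}\vOne\|\ge\lambda_{min}\|\vQ-\frac{1}{L}Q_\Sigma\vOne\|=\lambda_{min}\|\vQ_\perp\|$, where the heterogeneity of $\vmu$ is absorbed into the individual $\lambda_l$ and only $\lambda_{min}=\mu_{min}-\epsilon/L$ survives; the assumption $\epsilon<\mu_{min}\sqrt{L}-\delta$ is exactly what keeps $\lambda_{min}$ bounded below by a positive constant depending only on $\delta,\mu_{min},L$ (this, not a lower bound on $\lambda_\Sigma\e$, is its true role). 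The $\frac{\epsilon}{\sqrt{L}}\|\vQ_\parallel\|$ term is then cancelled against a matching lower bound on $\bE[\Delta W_\parallel\,|\,\vQ]\ge-\frac{2\epsilon}{\sqrt{L}}\|\vQ_\parallel\|-K_2$. If you replace your $\bar Q-Q_{min}$/Cauchy--Schwarz step by this per-queue comparison to $Q_{min}$, the remainder of your argument (the decomposition, the application of Lemma~\ref{lem:Hajek} with $\epsilon$-independent $\eta,\kappa,D$, and extracting $N_r$ from the uniform exponential bound via $x^r\le (r!/(\theta^\star)^r)e^{\theta^\star x}$) goes through.
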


\begin{proof}
The proof follows from Lemma~\ref{lem:Hajek} applied to the Lyapunov function $V_\perp(\vQ\e)\triangleq \|\vQ_\perp\e\|,$ where the following argument omits the superscript $\e$ for ease of exposition. Note that (\ref{eqn:VperpMaxBound}) already establishes Condition (C2) of Lemma~\ref{lem:Hajek}. Thus, the proof is done once we verify Condition (C1) for the JSQ policy. To that end, we use the bound on $\Delta V_\perp(\vQ)$ in (\ref{eqn:VperpDriftBound}) and study the mean conditional drift of $\Delta W(\vQ)$ and $\Delta W_\parallel(\vQ)$ next.

We start by upper-bounding
$\bE[\Delta W(\vQ)\> | \> \vQ]\triangleq\bE[\Delta W(\vQ)\> | \> \vQ[t]=\vQ]$. In what follows, we will omit the time reference $[t]$ after the first step for brevity.
\beqa
 \bE[\Delta W(\vQ)\> | \> \vQ]
     &=& \bE\left[\|\vQ[t+1]\|^2 - \|\vQ\|^2 \> | \> \vQ\right]\nonumber\\
     &=& \bE\left[\|\vQ+\vA-\vS+\vU\|^2 - \|\vQ\|^2 \> | \> \vQ\right]\nonumber\\
     &=& \bE\left[\|\vQ+\vA-\vS\|^2 +2 \La \vQ+\vA-\vS, \vU \Ra +\|\vU\|^2 - \|\vQ\|^2 \> | \> \vQ\right]\nonumber\\
     &\stackrel{(a)}{\leq}& \bE\left[\|\vQ+\vA-\vS\|^2 - \|\vQ\|^2 \> | \> \vQ\right]\nonumber\\
     &=& \bE\left[2 \La \vQ, \vA-\vS \Ra + \|\vA-\vS\|^2 \> | \> \vQ\right]\nonumber\\
     &\leq& 2 \>\bE\left[ \La \vQ, \vA-\vS \Ra \> | \> \vQ\right] + K_1, \label{eqn:JSQcollapse:aux}
\eeqa
where the inequality (a) follows from the fact that $U_l(Q_l+A_l-S_l)=-U_l^2\leq 0,$ for each $l,$
and $K_1 \triangleq L \max(A_{max},S_{max})^2$ is bounded  since both the arrival and service processes are bounded.

Next, we bound the first term in (\ref{eqn:JSQcollapse:aux}) by first defining a hypothetical arrival rate vector $\vlam = (\lambda_l)_l$ with respect to the given service rate vector $\vmu$ and the $\epsilon \in (0,(\mu_{min}-\delta)L)$ as $\vlam \triangleq \vmu - \frac{\epsilon}{\sqrt{L}} \vc.$ Note that $\lambda_{min} \triangleq \min_l \lambda_l > \delta$, where $\delta$ is a fixed constant in $(0,\mu_{min}),$ and that $\sum_{l=1}^L \lambda_l = \mu_\Sigma - \epsilon,$ which is identical to the assumed rate $\lambda_\Sigma$ of the exogenous arrival process $\{A_\Sigma[t]\}_t$.
The following derivation starts by adding and subtracting $\vlam$ to the first term in (\ref{eqn:JSQcollapse:aux}):
\beqano
\bE\left[ \La \vQ, \vA-\vS \Ra \> | \> \vQ\right]
    &=& \La \vQ, \bE\left[\vA\> | \> \vQ\right]-\vlam\Ra-\La \vQ, \vmu-\vlam \Ra\\
    &\stackrel{(a)}{=}&
        \La \vQ, \bE\left[\vA\> | \> \vQ\right]-\vlam\Ra-\frac{\epsilon}{\sqrt{L}}\La \vQ, \vc \Ra\\
    &\stackrel{(b)}{=}& \bE[A_\Sigma \> | \> \vQ] \> Q_{min} - \La \vQ, \vlam \Ra -\frac{\epsilon}{\sqrt{L}} \|\vQ_\parallel\|\\
    &=& \lambda_\Sigma \> Q_{min} - \sum_{l=1}^L \lambda_l Q_l -\frac{\epsilon}{\sqrt{L}} \|\vQ_\parallel\|\\
    &=& -\sum_{l=1}^L \lambda_l (Q_l-\> Q_{min})
        -\frac{\epsilon}{\sqrt{L}} \|\vQ_\parallel\|\\
    &\leq& - \>\lambda_{min} \sum_{l=1}^L |Q_l-\> Q_{min}| -\frac{\epsilon}{\sqrt{L}} \|\vQ_\parallel\|\\
    &=& - \|\vQ-\> Q_{min} \> \vOne\|_1 \> \lambda_{min}
    -\frac{\epsilon}{\sqrt{L}} \|\vQ_\parallel\|\\
    &\stackrel{(c)}{\leq}& -\|\vQ-\> Q_{min} \> \vOne\| \> \lambda_{min}-\frac{\epsilon}{\sqrt{L}} \|\vQ_\parallel\|\\
    &\stackrel{(d)}{\leq}& -\|\vQ-\> \frac{1}{L} Q_\Sigma \> \vOne\| \> \lambda_{min}-\frac{\epsilon}{\sqrt{L}} \|\vQ_\parallel\|\\
    &\stackrel{(e)}{\leq}& - {\delta} \> \|\vQ_\perp\| -\frac{\epsilon}{\sqrt{L}} \|\vQ_\parallel\|
\eeqano
where
$$Q_{min} \triangleq \min_{1\leq m\leq L}  Q_m \geq 0;\qquad
\|\vx\|_1 \triangleq \sum_{l=1}^L |x_l| \textrm{ is the $l_1$-norm of } \vx,$$
and step (a) follows from the definition of $\vlam;$ (b) follows from the definitions of the JSQ policy (see Definition~\ref{def:JSQ}) and of $\vQ_\parallel;$ (c) is true since $l_1$ norm $\|\vx\|_1$ of any vector $\vx \in \bR^L$ is no smaller than its $l_2$ (Euclidean) norm $\|\vx\|;$ (d) is true since $\frac{1}{L} Q_\Sigma$ minimizes the convex function $\|\vQ-y \> \vOne\|$ over $y\in\bR;$ (e) is true since $\lambda_{min} > \delta$ by construction of $\vlam.$ This bound, when substituted in (\ref{eqn:JSQcollapse:aux}), yields:
\beqa
 \bE[\Delta W(\vQ)\> | \> \vQ] &\leq& - 2 \> \delta \> \|\vQ_\perp\|
 -\frac{2 \>\epsilon}{\sqrt{L}} \|\vQ_\parallel\| + K_1.
 \label{eqn:JSQ:WdriftBound}
\eeqa
Next, we lower bound $\bE[\Delta W_\parallel(\vQ)\> | \> \vQ]\triangleq\bE[\Delta W_\parallel(\vQ)\> | \> \vQ[t]=\vQ]$:
 \beqa
 \bE[\Delta W_\parallel(\vQ)\> | \> \vQ]
 &=& \bE\left[\La \vc, \vQ[t+1]\Ra^2 - \La \vc, \vQ[t]\Ra^2 \> | \> \vQ\right] \nonumber \\
 &=& \bE\left[\La \vc, \vQ+\vA-\vS+\vU\Ra^2 - \La \vc, \vQ\Ra^2 \> | \> \vQ\right] \nonumber \\
 &=& \bE\left[\La\vc,\vQ+\vA-\vS\Ra^2 + 2\La\vc,\vQ+\vA-\vS\Ra \La\vc,\vU\Ra\right.\nonumber\\
 &&\left.+ \La\vc,\vU\Ra^2 - \La\vc,\vQ\Ra^2\> | \> \vQ\right] \nonumber \\
 &=& \bE\left[2\La\vc,\vQ\Ra \La\vc,\vA-\vS\Ra + \La\vc,\vA-\vS\Ra^2\right.
   \nonumber\\&&\left.+ 2\La\vc,\vQ+\vA-\vS\Ra \La\vc,\vU\Ra + \La\vc,\vU\Ra^2\> | \> \vQ\right]\nonumber \\
 &=& 2\La\vc,\vQ\Ra \La\vc,\bE\left[\vA\> | \> \vQ\right]-\vmu\Ra
        - 2\bE\left[\La\vc,\vS\Ra \La\vc,\vU\Ra \> | \> \vQ\right] \nonumber\\
 & & + \bE\left[\La\vc,\vA-\vS\Ra^2 + 2 \La \vc, \vQ +\vA \Ra \La \vc, \vU \Ra
     + \La\vc,\vU\Ra^2\> | \> \vQ\right] \label{eqn:JSQcollapse:aux1} \\
 &\geq& 2\La\vc,\vQ\Ra \La\vc,\bE\left[\vA\> | \> \vQ\right]-\vmu\Ra
        - 2\bE\left[\La\vc,\vS \Ra \La\vc,\vU\Ra \> | \> \vQ\right] , \label{eqn:JSQcollapse:aux2}
 \eeqa
where the inequality follows from the observation that $(\ref{eqn:JSQcollapse:aux1}) \geq 0$ as $\vc \succ \vZero,$ $\vU \succeq \vZero,$ $\vQ \succeq \vZero.$ We further lower-bound the remaining terms in (\ref{eqn:JSQcollapse:aux2}) individually. For the first term, we first recall that $\La \vc, \vQ \Ra = \|\vQ_\parallel\|.$ Also noting that $\vc = \vOne/\sqrt{L}$ here and that $\lambda_\Sigma=\mu_\Sigma-\epsilon,$
\beqano
\La\vc,\bE\left[\vA\> | \> \vQ\right]-\vmu\Ra &=& \frac{1}{\sqrt{L}} \sum_{l=1}^L (\bE[A_l\> | \> \vQ] - \mu_l)\> = \> \frac{\lambda_\Sigma-\mu_\Sigma}{\sqrt{L}} \> =\> -\frac{\epsilon}{\sqrt{L}}.
\eeqano
Finally, we can lower-bound the second term in (\ref{eqn:JSQcollapse:aux2}) easily by $-K_2,$ where $K_2 \triangleq 2 L S_{max}^2,$ since $S_l \leq S_{max}$ and $U_l \leq S_{max}$ for all $l.$ Thus, using these lower bounds in (\ref{eqn:JSQcollapse:aux2}) yields:
\beqa
 \bE[\Delta W_\parallel(\vQ)\> | \> \vQ] &\geq& -\frac{2\> \epsilon}{\sqrt{L}} \|\vQ_\parallel\| - K_2. \label{eqn:JSQ:W_par_driftBound}
\eeqa
We now substitute the bounds (\ref{eqn:JSQ:WdriftBound}) and (\ref{eqn:JSQ:W_par_driftBound}) in (\ref{eqn:VperpDriftBound}), and cancel common terms, to bound the conditional mean drift of $V_\perp(\vQ)$ as
\beqano
\bE[\Delta V_\perp(\vQ)\>|\>\vQ] &\leq& -\delta + \frac{K_1+K_2}{2\|\vQ_\perp\|},
\eeqano
where $\delta,$ $K_1,$ and $K_2$ are positive constants defined above, all independent of $\epsilon.$ Note that this inequality verifies Condition (C1) of Lemma~\ref{lem:Hajek}, and hence establishes the claimed existence of finite constants $\{N_r\}_{r=1,2,\cdots}$ for which $\bE[\|\bvQ_\perp\e\|^r] \leq N_r,$ for all $\epsilon \in (0,(\mu_{min}-\delta)L),$ and for each $r=1,2,\cdots.$
\hfill
\end{proof}

This proposition establishes that even for a heavily loaded network, where $\epsilon$ approaches zero, the steady-state queue-length vector concentrates around the line along $\vc,$ and the difference between the queue lengths have bounded moments. In contrast, the lower bound on the total queue length goes to infinity as $\epsilon$ goes to zero. Thus, in comparison to the total queue length, the differences in the queue lengths are negligible. This observation will be critical in Section~\ref{sec:UBs} for arguing that the queueing network performs close to the lower-bounding resource-pooled system of Section~\ref{sec:LBs}. Since relative to the queue lengths, the differences in the queue lengths are negligible, the queue lengths can be thought as being attracted towards the line defined by the vector $\vc.$ We will call this line, the \emph{line of attraction}.

\subsection{State-Space Collapse for the Scheduling Problem under the MW Policy}
\label{sec:Collapse_MWS}

We recall the model introduced in Lemma~\ref{lem:Scheduling_LBs} for the scheduling problem, where the exogenous arrival vector process $\{\vA\uve[t]\}_t$ is parameterized by $\veps > 0$ that measures the distance of the arrival rate vector $\vlam\e\in Int(\cR)$ for the $k^{th}$ face of the achievable rate region. Also recall that we fix a face $\cF\uk$ and a $\lambda\uk\in Relint(\cF\uk)$ throughout. The corresponding queue-length vector process and the limiting queue-length random vector under the MW Scheduler are, respectively, denoted by $\{\vQ\uve[t]\}_t$ and $\bvQ\uve.$

Different from the routing case, in the scheduling problem, depending upon the arrival rate vector, the line of attraction can be different. In general, associated with each hyperplane enclosing the capacity region $\cR$ (cf. Definition~\ref{def:SchedulingRateRegionR}) is a possible line of attraction. Recall that the pair $(\vc\uk,b\uk)$ defines the $k^{th}$ hyperplane $\cH\uk$ through its unit norm normal vector $\vc\uk$ and its value $b\uk>0.$ Accordingly, we define the projection and the perpendicular vector of any given $\vQ\uve$ with respect to the $k^{th}$ normal as:
 \beqano
 \vQ_\parallel\uvek &\triangleq& \La \vc\uk,\vQ\uve \Ra \> \vc\uk \\
 \vQ_\perp\uvek &\triangleq& \vQ\uve - \vQ_\parallel\uvek.
 \eeqano
With this notation, we now present the state-space collapse result for the MW Scheduler.

\begin{proposition}\label{prop:MWS_Qperp_boundedness}
For the given face $\cF\uk$ and $\vlam\uk\in Relint(\cF\uk)$, consider the MW Scheduler under the arrival process  $\{\vA\uve[t]\}_t$ with a mean rate vector $\vlam\uve \in Int(\cR)$ as defined in (\ref{eqn:projection_k}). Then, there exist finite constants $\{N_r\uk\}_{r=1,2,\cdots}$, independent of $\epsilon,$ such that $\bE\left[\|\bvQ_\perp\uvek\|^r\right] \leq N_r\uk,$ for all $\veps >0 ,$ and each $r=1,2,\cdots.$
\end{proposition}

\begin{proof}
Assuming a given $\veps >0,$ the following argument omits the superscript $\uve$ for notational convenience so that $\vQ\uve, \vlam\uve, \vQ\uvek$ are respectively denoted as $\vQ, \vlam, \vQ\uk$. Let us recall the Lyapunov functions in (\ref{eqn:LyapDefs}) to define their counterparts associated with the normal vector $\vc\uk:$
\beqano
 V_\perp\uk(\vQ)\triangleq \|\vQ_\perp\uk\|, \quad W(\vQ)\triangleq \|\vQ\|^2, \quad \textrm{and} \quad W_\parallel\uk(\vQ)\triangleq \|\vQ_\parallel\uk\|^2,
 \label{eqn:MWS:LyapDefs}
\eeqano
The rest of the proof follows the same line of reasoning as in the proof of Proposition~\ref {prop:JSQ_Qperp_boundedness} to bound $\Delta V_\perp\uk(\vQ)$ by utilizing (\ref{eqn:VperpDriftBound}). However, before we start analyzing the mean conditional drift of $\Delta W(\vQ\uk)$ and $\Delta W_\parallel(\vQ\uk),$ we note the important new fact that: since $\vlam\uk$ is in the relative interior of $\cF\uk,$ there exists a small enough $\delta\uk>0$ such that the set
\beqano
\cB_{\delta\uk}\uk \triangleq \cH\uk \cap \{\vr \in \bR_+^L: \|\vr-\vlam\uk\|\leq \delta\uk\},
\eeqano
denoting the set of vectors on the hyperplane $\cH\uk$ that are within $\delta\uk$ distance from $\vlam\uk,$ lies strictly within the face $\cF\uk.$

We are now ready to upper-bound $\bE[\Delta W(\vQ)\> | \> \vQ]\triangleq\bE[\Delta W(\vQ)\> | \> \vQ[t]=\vQ]$ exactly as in the derivation of (\ref{eqn:JSQcollapse:aux}) to obtain:
\beqa
 \bE[\Delta W(\vQ)\> | \> \vQ]
     &\leq& 2 \>\bE\left[ \La \vQ, \vA-\vS \Ra \> | \> \vQ\right] + K_1, \label{eqn:MWScollapse:aux}
\eeqa
where $K_1 \triangleq L \max(A_{max},S_{max})^2.$ Next, we use the definition of $\vlam$ (recall that $\e$ is omitted here) from (\ref{eqn:projection_k}) to expand the first term as:
\beqa
\bE\left[ \La \vQ, \vA-\vS \Ra \> | \> \vQ\right]
    &=& \La \vQ, \vlam\uk-\euk\vc\uk \Ra -\bE\left[\La \vQ, \vS \Ra \> | \> \vQ\right] \nonumber\\
    &=& -\euk\La \vQ, \vc\uk\Ra 
        +\La \vQ, \vlam\uk-\bE\left[\vS\> | \> \vQ\right] \Ra \nonumber\\
    &=& -\euk\|\vQ_\parallel\uk\|
        +\La \vQ, \vlam\uk-\bE\left[\vS\> | \> \vQ\right] \Ra \label{eqn:MWScollapse:aux1}
\eeqa
We note that the definition of the MW Scheduler (see Definition~\ref{def:MWS}) directly implies that
\beqa
\La \vQ, \bE\left[\vS\> | \> \vQ\right] \Ra &=& \max_{\vr\in\cR} \> \La \vQ,\vr \Ra,
    \label{eqn:MWS:MeanBehavior}
\eeqa
where $\cR$ is described in Definition~\ref{def:SchedulingRateRegionR}. Thus, noting that the set $\cB_{\delta\uk}\uk \subset \cR,$ we can upper-bound (\ref{eqn:MWScollapse:aux1}) as follows:
\beqano
\bE\left[ \La \vQ, \vA-\vS \Ra \> | \> \vQ\right]
    &\leq& - \euk\|\vQ_\parallel\uk\|
        +\min_{\vr \in \cB_{\delta\uk}\uk} \La \vQ, \vlam\uk-\vr \Ra \\
    &\stackrel{(a)}{=}& - \euk\|\vQ_\parallel\uk\|
        +\min_{\vr \in \cB_{\delta\uk}\uk} \La \vQ_\perp\uk, \vlam\uk-\vr \Ra \\
    &\stackrel{(b)}{=}& - \euk\|\vQ_\parallel\uk\|
        - \delta\uk \|\vQ_\perp\uk\|
\eeqano
where (a) follows from the facts that $\vQ_\perp\uk = \vQ - \La \vc\uk,\vQ\Ra \> \vc\uk$ by definition, and that the vector $\vlam\uk-\vr$ is perpendicular to the normal vector $\vc\uk$ since both $\vlam\uk$ and $\vr$ are restricted to be on the hyperplane $\cH\uk;$ (b) follows from a key idea in \cite{sto04} where it is noted that the inner product between $\vQ_\perp\uk$ is minimized when $\vr$ is selected on the boundary of $\cB_{\delta\uk}\uk,$ i.e., with $\|\vlam\uk-\vr\|=\delta\uk,$ such that $\vlam\uk-\vr$ points in the opposite direction to $\vQ_\perp\uk.$ Substituting the above result in (\ref{eqn:MWScollapse:aux}) yields our first bound:
\beqa
 \bE[\Delta W(\vQ)\> | \> \vQ]
     &\leq& - 2 \euk\|\vQ_\parallel\uk\|
        - 2 \delta\uk \|\vQ_\perp\uk\| + K_1. \label{eqn:MWS:WdriftBound}
\eeqa

The lower-bounding argument on $\bE[\Delta W_\parallel\uk(\vQ)\> | \> \vQ]\triangleq\bE[\Delta W_\parallel\uk(\vQ)\> | \> \vQ[t]=\vQ]$ follows the exact same steps until (\ref{eqn:JSQcollapse:aux1}) as in the JSQ case, with the minor modification in the final expression (\ref{eqn:JSQcollapse:aux2}) that the conditioning on $\vQ$ remains for the service vector $\vS$ instead of the arrival vector $\vA,$ since the MW Scheduler controls $\vS$ while the JSQ Router controls $\vA.$ This yields
 \beqa
 \bE[\Delta W_\parallel\uk(\vQ)\> | \> \vQ]
 &\geq& 2\La\vc\uk,\vQ\Ra \La\vc\uk,\vlam-\bE\left[\vS\> | \> \vQ\right]\Ra
        - 2\bE\left[\La\vc\uk,\vS \Ra \La\vc\uk,\vU\Ra \> | \> \vQ\right] \nonumber \\
 &\stackrel{(a)}{\geq}& 2\La\vc\uk,\vQ\Ra \La\vc\uk,\vlam-\bE\left[\vS\> | \> \vQ\right]\Ra - K_2\nonumber \\
 &\stackrel{(b)}{=}& -2 \euk\|\vQ_\parallel\uk\|
     + 2 \|\vQ_\parallel\uk\| \left(\La \vc\uk, \vlam\uk \Ra - \La \vc\uk,\bE\left[\vS\> | \> \vQ\right]\Ra\right) - K_2  \nonumber \\
 &\stackrel{(c)}{\geq}& -2 \euk\|\vQ_\parallel\uk\| - K_2, \label{eqn:MWS:W_par_driftBound}
 \eeqa
where step: (a) is true for $K_2 \triangleq 2 L S_{max}^2;$ (b) uses
the definitions of projection $\vlam\uk$; and (c) follows from the
fact that $\La \vc\uk, \vlam\uk\Ra = b\uk$ since $\vlam\uk \in
\cH\uk,$ and $\{\vr \succeq \vZero: \La \vc\uk,\vr \Ra \leq b\uk\}
\supset \cR$ from (\ref{eqn:SchedulingRateRegionR}).

Finally, utilizing the bounds (\ref{eqn:MWS:WdriftBound}) and (\ref{eqn:MWS:W_par_driftBound}) in (\ref{eqn:VperpDriftBound}), and canceling common terms, yields the following bound on the conditional mean drift of $V_\perp\uk(\vQ)$ as:
\beqano
\bE[\Delta V_\perp\uk(\vQ)\>|\>\vQ] &\leq& -\delta\uk + \frac{K_1+K_2}{2\|\vQ_\perp\uk\|},
\eeqano
where $\delta\uk,$ $K_1,$ and $K_2$ are positive constants defined above, all independent of $\veps.$ By verifying Condition (C1) of Lemma~\ref{lem:Hajek}, this result proves the claimed existence of finite constants $\{N_r\uk\}_{r=1,2,\cdots}$ such that $\bE\left[\|\bvQ_\perp\uvek\|^r\right] \leq N_r\uk,$ for all $\veps >0 ,$ and each $r=1,2,\cdots.$
\hfill
\end{proof}

\section{Upper Bounds and Heavy-Traffic Optimality}
\label{sec:UBs}

The purpose of this section is to derive upper-bounds on the steady-state queue-length through simple Lyapunov-based methods that will be shown to be asymptotically tight under heavy-traffic, i.e., when the arrival rates will approach the boundary of the capacity regions. In particular, we use the state-space collapse results established in Section~\ref{sec:Collapse}.

As in the case of the state-space collapse results, here too the analysis is similar for both the JSQ and MW Policies. The derivation of the upper bounds for both cases use the following lemma.

\begin{lemma}\label{lem:UBFirstSteps}
Consider the generic queueing system described in Section~\ref{sec:model} with queues evolving according to (\ref{eqn:Qevolve}), where the arrival $\vA[t]$ and service $\vS[t]$ vectors at time $t$ are allowed to depend on $\vQ[t]$. To indicate this dependence, we will use the notation $\vA(\vQ)$ and $\vS(\vQ)$ to refer to these two processes. Suppose $\{\vQ[t]\}_t$ converges in distribution to a valid random vector $\bvQ,$ with all bounded moments, i.e., $\bE[\|\bvQ\|^r] < \infty$ for each $r=1,2,\cdots.$ Then, for any positive vector $\vc \in \bR_{++}^L,$ we have
\beqa
\bE\left[ \La \vc, \bvQ \Ra \La \vc, \vS(\bvQ)-\vA(\bvQ) \Ra\right]
    &=& \frac{\bE\left[\La \vc, \vA(\bvQ)-\vS(\bvQ)\Ra^2\right]}{2}
        + \frac{\bE\left[\La \vc, \vU(\bvQ)\Ra^2\right]}{2} \label{eqn:UBFirstStep1}\\
    &&   + \bE\left[\La \vc, \bvQ+\vA(\bvQ)-\vS(\bvQ)\Ra \La \vc, \vU(\bvQ)\Ra \right],\label{eqn:UBFirstStep2}
\eeqa
where $\vU(\bvQ)$ is the random vector of unused service (cf. (\ref{eqn:Qevolve})) in steady state.
\end{lemma}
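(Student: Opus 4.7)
The plan is to derive the identity by setting the one-step drift of the quadratic Lyapunov function $W_\parallel(\vQ) \triangleq \La \vc, \vQ \Ra^2$ equal to zero in steady state. This is the natural candidate because the right-hand side of (\ref{eqn:UBFirstStep1})--(\ref{eqn:UBFirstStep2}) is exactly what appears when one expands $\La \vc, \vQ[t+1]\Ra^2 - \La \vc, \vQ[t]\Ra^2$ and collects the terms that do not involve $\La \vc, \vQ\Ra$ linearly.

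First, I would use (\ref{eqn:Qevolve}) to write $\La \vc, \vQ[t+1]\Ra = \La \vc, \vQ[t]\Ra + \La \vc, \vA(\vQ[t])-\vS(\vQ[t])\Ra + \La \vc, \vU(\vQ[t])\Ra$. Squaring this and subtracting $\La \vc, \vQ[t]\Ra^2$ gives, after grouping terms,
\beqano
\Delta W_\parallel(\vQ) &=& 2\,\La \vc, \vQ\Ra \La \vc, \vA(\vQ)-\vS(\vQ)\Ra + \La \vc, \vA(\vQ)-\vS(\vQ)\Ra^2 \\
&& +\, 2\,\La \vc, \vQ+\vA(\vQ)-\vS(\vQ)\Ra \La \vc, \vU(\vQ)\Ra + \La \vc, \vU(\vQ)\Ra^2.
\eeqano
Rewriting $\La \vc, \vA(\vQ)-\vS(\vQ)\Ra = -\La \vc, \vS(\vQ)-\vA(\vQ)\Ra$ in the first term, the identity to be proven is precisely the statement that the expectation of $\Delta W_\parallel$ in steady state is zero.

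Second, I would justify $\bE[\Delta W_\parallel(\bvQ)] = 0$. Since $\{\vQ[t]\}_t$ converges in distribution to $\bvQ$ and shares the same transition structure, the stationary version of the chain satisfies $\La \vc, \vQ[t]\Ra^2 \stackrel{d}{=} \La \vc, \vQ[t+1]\Ra^2$; taking expectations and subtracting yields the vanishing of the mean drift, provided both sides are finite. Finiteness is guaranteed by the hypothesis $\bE[\|\bvQ\|^r] < \infty$ for every $r$: in particular $\bE[\La \vc, \bvQ \Ra^2] \leq \|\vc\|^2\,\bE[\|\bvQ\|^2] < \infty$, so all cross-terms in $\Delta W_\parallel$ are integrable (the increments $\La \vc, \vA-\vS\Ra$ and $\La \vc, \vU\Ra$ are bounded by constants depending only on $\vc$, $A_{max}$, $S_{max}$, and $L$, via the bounded support assumptions carried throughout Section~\ref{sec:model}).

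Finally, I would take the expectation of the display above under the stationary distribution, substitute $\bvQ$ for $\vQ$, set the left-hand side to zero, and divide by $2$. The two variance-like terms $\tfrac{1}{2}\bE[\La \vc, \vA(\bvQ)-\vS(\bvQ)\Ra^2]$ and $\tfrac{1}{2}\bE[\La \vc, \vU(\bvQ)\Ra^2]$ emerge from (\ref{eqn:UBFirstStep1}), and the remaining mixed expectation $\bE[\La \vc, \bvQ + \vA(\bvQ)-\vS(\bvQ)\Ra \La \vc, \vU(\bvQ)\Ra]$ emerges as (\ref{eqn:UBFirstStep2}), delivering exactly the claimed equality. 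The only non-routine point is the integrability/exchange step in the second paragraph; everything else is direct algebraic expansion.
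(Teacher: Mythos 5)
Your proposal is correct and follows essentially the same route as the paper: expand the one-step drift of $W_\parallel(\vQ)=\La \vc,\vQ\Ra^2$ using (\ref{eqn:Qevolve}), note that $\bE[\La\vc,\bvQ\Ra^2]\leq\|\vc\|^2\bE[\|\bvQ\|^2]<\infty$ so the stationary mean drift is zero, and read off the identity. Your added care about integrability of the cross-terms via the bounded increments is a fine (if slightly more explicit) rendering of the same step the paper takes.
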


\begin{proof} Recall the definition of $W_\parallel(\vQ)\triangleq \|\vQ_\parallel\|^2$ from (\ref{eqn:LyapDefs}) and the subsequent definition of its one-step drift $\Delta W_\parallel(\vQ).$ We can expand this drift expression as follows:
 \beqano
 \Delta W_\parallel(\vQ) &=& \left[\La \vc, \vQ[t+1]\Ra^2 - \La \vc, \vQ[t]\Ra^2\right] \> \one(\vQ[t]=\vQ)\\
 &=& \La \vc, \vQ+\vA-\vS+\vU\Ra^2 - \La \vc, \vQ\Ra^2 \\
 &=& \La\vc,\vQ+\vA-\vS\Ra^2 + 2\La\vc,\vQ+\vA-\vS\Ra \La\vc,\vU\Ra
        + \La\vc,\vU\Ra^2 - \La\vc,\vQ\Ra^2\\
 &=& 2\La\vc,\vQ\Ra \La\vc,\vA-\vS\Ra + \La\vc,\vA-\vS\Ra^2
    + 2\La\vc,\vQ+\vA-\vS\Ra \La\vc,\vU\Ra\\&& + \La\vc,\vU\Ra^2
 \eeqano
Now, since $\|\vQ_\parallel\|^2 \leq \|\vQ\|^2$ and $\bE[\|\bvQ\|^2]<\infty,$ we clearly have $\bE[W_\parallel(\bvQ)]<\infty.$ Hence, in steady state, $\Delta W_\parallel$ must have a zero mean. Thus setting $\bE[\Delta W_\parallel(\bvQ)]=0$ gives the desired result.
\hfill
\end{proof}

The state-space collapse result is crucial in bounding the expectation in (\ref{eqn:UBFirstStep2}). To provide some intuition, we first note the following useful property concerning the unused service.

\begin{lemma}\label{lem:UnusedService}
For a queueing network evolving according to (\ref{eqn:Qevolve}) and for any given vector $\vc\succeq \vZero$ with $\|\vc\|=1$, the following property always holds for any policy:
\beqa
\La \vc, \vQ[t+1] \Ra \La \vc, \vU[t] \Ra &=& \La -\vQ_\perp[t+1], \vU[t] \Ra, \label{eqn:UnusedService}
\eeqa
where $\vQ_\perp[t+1] = \vQ[t+1] - \La \vc, \vQ[t+1] \Ra \> \vc.$
\end{lemma}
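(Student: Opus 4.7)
The plan is to exploit the key complementary-slackness property of the unused service: $U_l[t]$ can only be positive when queue $l$ is empty immediately after the update, i.e., $U_l[t] > 0$ forces $Q_l[t+1] = 0$. This follows directly from the definition $U_l[t] = \max(0, S_l[t] - A_l[t] - Q_l[t])$ together with the queue recursion (\ref{eqn:Qevolve}): whenever unused service is generated, the queue has just been drained to zero. Consequently, for every $l$ and every $t$,
\beqano
Q_l[t+1] \cdot U_l[t] = 0,
\eeqano
and hence $\La \vQ[t+1], \vU[t] \Ra = 0$.

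Starting from this identity, I would decompose $\vQ[t+1]$ into its components parallel and perpendicular to $\vc$:
\beqano
\vQ[t+1] = \La \vc, \vQ[t+1] \Ra \> \vc + \vQ_\perp[t+1].
\eeqano
Taking the inner product with $\vU[t]$ and using bilinearity gives
\beqano
0 = \La \vQ[t+1], \vU[t] \Ra = \La \vc, \vQ[t+1] \Ra \La \vc, \vU[t] \Ra + \La \vQ_\perp[t+1], \vU[t] \Ra,
\eeqano
and rearranging yields the claimed identity (\ref{eqn:UnusedService}). The assumption $\|\vc\|=1$ is used only so that the parallel component has coefficient $\La \vc, \vQ[t+1]\Ra$ rather than $\La \vc, \vQ[t+1]\Ra/\|\vc\|^2$; the nonnegativity $\vc \succeq \vZero$ is not actually needed for this algebraic identity, though it is presumably invoked elsewhere for sign reasons.

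There is no real obstacle here; the only content is recognizing the complementary-slackness fact $Q_l[t+1] U_l[t] = 0$, after which the result is a one-line orthogonal decomposition. This lemma will be useful in the subsequent upper-bound derivations because it converts the boundary term $\La \vc, \vQ[t+1]\Ra \La \vc, \vU[t]\Ra$ appearing in (\ref{eqn:UBFirstStep2}) into an expression involving only the perpendicular component $\vQ_\perp[t+1]$, which by the state-space collapse results (Propositions 1 and 2) has moments uniformly bounded in the heavy-traffic parameter.
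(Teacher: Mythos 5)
Your proof is correct, and it rests on exactly the same crucial observation as the paper's: the complementary-slackness fact $Q_l[t+1]\,U_l[t]=0$ for every $l$, hence $\La \vQ[t+1],\vU[t]\Ra=0$. The execution differs, though, and yours is the more economical one: you decompose only $\vQ[t+1]$ as $\La\vc,\vQ[t+1]\Ra\,\vc+\vQ_\perp[t+1]$ and expand the zero inner product by bilinearity, which gives the identity in one line. The paper instead decomposes \emph{both} $\vQ[t+1]$ and $\vU[t]$ into parallel and perpendicular parts and passes through a chain of identities beginning with $\La\vc,\vQ[t+1]\Ra\La\vc,\vU[t]\Ra=\|\vQ_\parallel[t+1]\|\,\|\vU_\parallel[t]\|=\La\vQ_\parallel[t+1],\vU_\parallel[t]\Ra$, where the first equality (equality case of Cauchy--Schwarz, or more precisely matching signs of the two scalar coefficients) uses $\vc\succeq\vZero$ together with $\vQ[t+1]\succeq\vZero$ and $\vU[t]\succeq\vZero$. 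Your version makes clear that the identity is purely algebraic given the stated definition of $\vQ_\perp[t+1]$: neither the nonnegativity of $\vc$ nor, in fact, the normalization $\|\vc\|=1$ is needed for this particular equation (the latter matters only if one wants $\La\vc,\vQ\Ra\vc$ to be the genuine orthogonal projection, as it is in its later uses). Both arguments deliver the same conclusion and feed into the bound on the cross term in the upper-bound analysis in the same way.
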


\begin{proof}
We use the notation $\vQ^+ \triangleq \vQ[t+1]$ and omit $[t]$ for the remaining parameters, $\vQ[t],\vA[t],\vS[t],\vU[t].$ Accordingly, we define the projections and perpendiculars of $\vQ^+$ and $\vU$ with respect to $\vc$ as:
\beqano
\vQ^+_\parallel \triangleq \La \vc, \vQ^+ \Ra \> \vc, \qquad  \vQ^+_\perp \triangleq \vQ^+ - \vQ^+_\parallel, \qquad
\vU_\parallel \triangleq \La \vc, \vU \Ra \> \vc, \qquad  \vU_\perp \triangleq \vU - \vU_\parallel.
\eeqano
The following argument uses the fact that $Q^+_l U_l = 0$ for all $l,$ since either $U_l[t]$ or $Q_l[t+1]$ must be zero for each $t.$ Hence, $\vU$ is orthogonal to $\vQ^+.$ We can now prove the claim:
\beqano
\La \vc, \vQ^+ \Ra \La \vc, \vU \Ra
    &=& \|\vQ^+_\parallel\| \|\vU_\parallel\|
    \> \stackrel{(a)}{=} \> \La \vQ^+_\parallel, \vU_\parallel\Ra\\
    &{=}& \La \vQ^+ - \vQ^+_\perp, \vU_\parallel\Ra
    \> \stackrel{(b)}{=} \> \La \vQ^+, \vU-\vU_\perp \Ra\\
    &\stackrel{(c)}{=}& \La \vQ^+_\perp + \vQ^+_\parallel, -\vU_\perp \Ra
    \>\stackrel{(d)}{=}\> \La -\vQ^+_\perp , \vU_\perp \Ra,\\
     &\stackrel{(e)}{=}& \La -\vQ^+_\perp , \vU \Ra,
\eeqano
where step: (a) corresponds to the equality case of the Cauchy-Schwartz inequality since $\vQ^+_\parallel\succeq \vZero$ and $\vU_\parallel\succeq \vZero$ are aligned by definition; (b) follows from the facts that $\vQ^+_\perp \perp \vU_\parallel$ and $\vU_\parallel = \vU-\vU_\perp;$ (c) utilizes the facts that $\vQ^+ \perp \vU$ and that $\vQ^+ = \vQ^+_\perp + \vQ^+_\parallel;$ (d) uses the fact that $\vQ^+_\parallel \perp \vU_\perp;$ and (e) uses the facts that $\vU = \vU_\perp + \vU_\parallel$ and $\vQ^+_\perp \perp \vU_\parallel.$
\hfill
\end{proof}

Now, we present some intuition regarding the usefulness of the state-space collapse results (cf. Propositions~\ref{prop:JSQ_Qperp_boundedness} and \ref{prop:MWS_Qperp_boundedness}). Using (\ref{eqn:UnusedService}) and noting that $\vQ[t+1] = \vQ[t]+\vA[t]-\vS[t]+\vU[t],$ we can re-write (\ref{eqn:UBFirstStep2}) as:
\beqa
(\ref{eqn:UBFirstStep2})
    &=& \bE\left[\La \vc, \bvQ[t+1]\Ra \La \vc, \vU(\bvQ)\Ra \right]
        - \bE\left[\La \vc, \vU(\bvQ)\Ra^2 \right] \nonumber \\
    &\leq& \bE\left[\La \vc, \bvQ[t+1]\Ra \La \vc, \vU(\bvQ)\Ra \right]
    \nonumber \\
    &=& \bE\left[\La -\bvQ_\perp[t+1], \vU(\bvQ)\Ra \right] \nonumber \\
    &\stackrel{(a)}{\leq}& \sqrt{\bE\left[\|\bvQ_\perp[t+1]\|^2\right] \bE\left[\|\vU(\bvQ)\|^2\right]}\nonumber \\
    &\stackrel{(b)}{=}& \sqrt{\bE\left[\|\bvQ_\perp\|^2\right] \bE\left[\|\vU(\bvQ)\|^2\right]}, \label{eqn:UB:CrossTerm}
\eeqa
where step (a) follows from Cauchy-Schwartz inequality and step (b) is true since the distributions of $\vQ[t+1]$ and $\vQ[t]$ are the same in steady state. The last expression reveals the intuition behind why the state-space collapse results are useful: the expression vanishes as $\epsilon \downarrow 0$ since $\bE\left[\|\bvQ_\perp\|^2\right]$ is uniformly bounded, and unused service goes to zero as $\epsilon \downarrow 0.$  Next, we will build on this intuition to prove heavy-traffic optimality of both the JSQ Routing and MW Scheduling Policies.

\subsection{Upper Bounds and Heavy-Traffic Optimality of JSQ Routing}
\label{sec:UB_JSQ}

The following proposition presents a bound on the steady-state total queue-length under JSQ routing, and establishes the first-moment heavy-traffic optimality of JSQ as the network load approaches the capacity of the network.

\begin{proposition}\label{prop:JSQ_UB_firstmoment}
Consider the routing problem under the exogenous arrival process $\{A_\Sigma\e[t]\}_t$ with mean rate $\lambda_\Sigma\e \in Int(\cR)$ satisfying $\epsilon \triangleq \mu_\Sigma-\lambda_\Sigma > 0,$ and with variance $(\sigma_\Sigma\e)^2.$ Then, under the JSQ Routing Policy, the limiting steady-state queue-length vector $\bvQ\e$ satisfies
 \beqa
 \bE\left[\sum_{l=1}^L \bQ_l\e \right] &\leq& \frac{\zeta\e}{2\epsilon} + \bB_1\e,\label{eqn:JSQ_UB_firstmoment}
 \eeqa
where we recall that $\zeta\e \triangleq (\sigma_\Sigma\e)^2 + \nu_\Sigma^2$ as it is defined in Section~\ref{sec:LB_Routing}, and $\bB_1\e$ is
$o\left(\frac{1}{\epsilon}\right),$ i.e., $\dsp\lim_{\epsilon\downarrow 0} \epsilon \bB_1\e = 0.$

Also, in the \textbf{heavy traffic limit}, where we consider a sequence of exogenous arrival processes $\{A_\Sigma\e[t]\}_t$ with $\epsilon \downarrow 0$ so that $\lambda_\Sigma\e$ approaches $\mu_\Sigma$ and $(\sigma_\Sigma\e)^2$ approaches a constant $\sigma_\Sigma^2$, we have
 \beqa
 \limsup_{\epsilon \downarrow 0}
    \epsilon \> \bE\left[\sum_{l=1}^L \bQ_l\e \right]
        &\leq& \frac{\zeta}{2}
            \label{eqn:JSQ_UB_HT_firstmoment},
 \eeqa
where $\zeta \triangleq \sigma_\Sigma^2 + \nu_\Sigma^2.$

Hence, comparing the heavy-traffic lower-bound (\ref{eqn:LB_HT_firstmoment}) for any feasible policy to the heavy-traffic upper-bound (\ref{eqn:JSQ_UB_HT_firstmoment}) for JSQ Router establishes the \emph{first moment heavy-traffic optimality of JSQ Routing Policy}.
\end{proposition}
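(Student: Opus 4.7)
The plan is to apply Lemma~\ref{lem:UBFirstSteps} with the unit vector $\vc = \vOne/\sqrt{L}$ identified as the line of attraction for JSQ, and then to use the state-space collapse result from Proposition~\ref{prop:JSQ_Qperp_boundedness} together with Lemma~\ref{lem:UnusedService} to control the cross term that would otherwise prevent the resulting inequality from being asymptotically tight. With this choice of $\vc$, one has $\La\vc,\vQ\Ra = (\sum_l Q_l)/\sqrt{L}$. A crucial structural feature of JSQ is that the total arrivals $\sum_l A_l[t] = A_\Sigma[t]$ are determined by the exogenous process and the service vector $\vS[t]$ is independent of $\vQ[t]$, so the random variable $\La\vc,\vS-\vA\Ra = (S_\Sigma-A_\Sigma\e)/\sqrt{L}$ decouples from $\bvQ\e$. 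Consequently, the left-hand side of the identity in Lemma~\ref{lem:UBFirstSteps} simplifies to $(\epsilon/L)\bE[\sum_l \bQ_l\e]$, and the first right-hand side quadratic term evaluates to $\bE[(A_\Sigma\e-S_\Sigma)^2]/(2L) = \zeta\e/(2L)$, providing the desired leading constant.

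The main obstacle, and the heart of the argument, is bounding the cross term $\bE[\La\vc,\bvQ+\vA-\vS\Ra\La\vc,\vU\Ra]$ from (\ref{eqn:UBFirstStep2}). The plan is to first substitute $\bvQ+\vA-\vS = \bvQ[t+1]-\vU$ to split this term into $\bE[\La\vc,\bvQ[t+1]\Ra\La\vc,\vU\Ra] - \bE[\La\vc,\vU\Ra^2]$; the second piece combines with the $\bE[\La\vc,\vU\Ra^2]/2$ from (\ref{eqn:UBFirstStep1}) to contribute $-\bE[\La\vc,\vU\Ra^2]/2 \leq 0$, which can be dropped from the upper bound. Lemma~\ref{lem:UnusedService} rewrites the remaining piece as $-\bE[\La\bvQ_\perp[t+1],\vU\Ra]$, and Cauchy-Schwartz, combined with stationarity, bounds its absolute value by $\sqrt{\bE[\|\bvQ_\perp\|^2]}\sqrt{\bE[\|\vU\|^2]}$. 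Proposition~\ref{prop:JSQ_Qperp_boundedness} then provides $\bE[\|\bvQ_\perp\|^2] \leq N_2$ \emph{uniformly} in $\epsilon$, while the steady-state zero-drift identity applied to $\sum_l Q_l$ yields $\bE[\sum_l U_l] = \mu_\Sigma-\lambda_\Sigma\e = \epsilon$; since each $U_l \leq S_{max}$ we obtain $\bE[\|\vU\|^2] \leq S_{max}\bE[\sum_l U_l] = S_{max}\epsilon$, so the cross term is $O(\sqrt{\epsilon})$.

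Assembling these bounds and multiplying by $L/\epsilon$ yields $\bE[\sum_l \bQ_l\e] \leq \zeta\e/(2\epsilon) + L\sqrt{N_2 S_{max}/\epsilon}$, so the error term $\bB_1\e := L\sqrt{N_2 S_{max}/\epsilon}$ satisfies $\epsilon\bB_1\e = L\sqrt{N_2 S_{max}\epsilon} \to 0$, which is exactly the required $o(1/\epsilon)$ property and proves (\ref{eqn:JSQ_UB_firstmoment}). The heavy-traffic bound (\ref{eqn:JSQ_UB_HT_firstmoment}) follows by multiplying (\ref{eqn:JSQ_UB_firstmoment}) by $\epsilon$, taking $\limsup_{\epsilon\downarrow 0}$, and using the assumed convergence $(\sigma_\Sigma\e)^2 \to \sigma_\Sigma^2$ to identify $\zeta\e \to \zeta$. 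The matching lower bound (\ref{eqn:LB_HT_firstmoment1}) from Lemma~\ref{lem:Routing_LBs} then closes the sandwich and establishes first-moment heavy-traffic optimality of JSQ. The conceptual point worth emphasizing is that without state-space collapse a crude bound on the cross term that merely uses boundedness of $\vU$ would leave an $O(\bE[\sum_l \bQ_l])$ error on the right-hand side, which cannot be absorbed; it is precisely the combination of the uniform $\bvQ_\perp$ bound with the $O(\epsilon)$ decay of $\bE[\|\vU\|^2]$ that keeps the error term $o(1/\epsilon)$.
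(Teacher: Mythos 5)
Your proposal is correct and follows essentially the same route as the paper's proof: apply Lemma~\ref{lem:UBFirstSteps} with $\vc=\vOne/\sqrt{L}$, exploit the independence of $A_\Sigma\e$ and $\vS$ from $\bvQ\e$ to get the leading terms, and control the cross term via Lemma~\ref{lem:UnusedService}, Cauchy--Schwarz, the uniform bound $\bE[\|\bvQ_\perp\e\|^2]\le N_2$ from Proposition~\ref{prop:JSQ_Qperp_boundedness}, and $\bE[\La\vc,\vU\Ra]=\epsilon/\sqrt{L}$, yielding an $O(\sqrt{\epsilon})$ estimate and hence $\bB_1\e=o(1/\epsilon)$. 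Your only (harmless) deviations are bookkeeping: you cancel $\tfrac12\bE[\La\vc,\vU\Ra^2]$ against the split of the cross term instead of bounding it separately by $\epsilon S_{max}/L$ as the paper does, and your $\zeta\e$ omits the $\epsilon^2$ term, which in any case is absorbed into $\bB_1\e$.
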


\begin{proof}
Recalling the definition of $\vc \triangleq \frac{\vOne}{\sqrt{L}}$ in the JSQ case, we first note a useful fact:
\beqa
\bE[\La \vc, \vU(\bvQ\e) \Ra] &=& \frac{\mu_\Sigma-\lambda_\Sigma}{\sqrt{L}} = \frac{\epsilon}{\sqrt{L}}, \label{eqn:JSQ:UbarIsSmall}
\eeqa
which follows from the fact that the mean drift of $\La \vc,\vQ\e \Ra$ must be zero in steady state. Next, we will temporarily omit the superscript $\e$ for ease of exposition and study the terms in (\ref{eqn:UBFirstStep1}) and (\ref{eqn:UBFirstStep2}) under JSQ operation.
\beqa
\bE\left[ \La \vc, \bvQ \Ra \La \vc, \vS(\bvQ)-\vA(\bvQ) \Ra\right]
    &=& \left(\frac{\mu_\Sigma-\lambda_\Sigma}{\sqrt{L}} \right)\bE[\La \vc,\bvQ\Ra]
        \> = \> \frac{\epsilon}{L} \bE\left[ \sum_{l=1}^L \bQ_l \right], \label{eqn:JSQ:UB:aux1}
\eeqa
which follows from the independence of the \emph{total} exogenous arrival process and individual service rate processes from the queue-length levels.
\beqa
\bE\left[\La \vc, \vA(\bvQ)-\vS(\bvQ)\Ra^2\right] &=& \frac{1}{L}\bE\left[\left( A_\Sigma-S_\Sigma \right)^2\right]
    \> = \> \frac{(\sigma_\Sigma^2 + \nu_\Sigma^2 + \epsilon^2)}{L} , \label{eqn:JSQ:UB:aux2}
\eeqa
where we recall that $\sigma_\Sigma^2$ and $\nu_\Sigma^2$ are respectively the variances of the exogenous arrival process $\{A_\Sigma[t]\}_t$ and the hypothetical total service process defined as $S_\Sigma[t] \triangleq \sum_{l=1}^L S_l[t].$
\beqa
\bE\left[\La \vc, \vU(\bvQ)\Ra^2\right]
    &\leq& \La \vc, S_{max} \vOne\Ra \> \bE\left[\La \vc, \vU(\bvQ)\Ra \right]
    \> = \> \frac{\epsilon S_{max}}{L},  \label{eqn:JSQ:UB:aux3}
\eeqa
where we use the bound $U_l \leq S_{max}$ for all $l,$ and the identity (\ref{eqn:JSQ:UbarIsSmall}).

Finally, to bound (\ref{eqn:UBFirstStep2}), we take the same steps as in the argument leading to (\ref{eqn:UB:CrossTerm}), and use the facts that $\vc = \vOne/\sqrt{L}$ and $U_l \leq S_{max}$ for all $l,$ to get:
\beqa
(\ref{eqn:UBFirstStep2})
    &\leq& \sqrt{\bE\left[\|\bvQ_\perp\|^2\right] \bE\left[\|\vU(\bvQ)\|^2\right]}\nonumber \\
    &\leq& \sqrt{\bE\left[\|\bvQ_\perp\|^2\right] S_{max}\sqrt{L}\bE\left[\La \vc, \vU(\bvQ)\Ra\right]}\nonumber \\
    &\leq& \sqrt{\epsilon N_2 S_{max}},    \label{eqn:JSQ:UB:aux4}
\eeqa
where, in the last step, we used (\ref{eqn:JSQ:UbarIsSmall}) and $N_2$ from Proposition~\ref{prop:JSQ_Qperp_boundedness}.

We reintroduce the superscript $\e$ to highlight the dependence on $\epsilon$ and substitute (\ref{eqn:JSQ:UB:aux1})-(\ref{eqn:JSQ:UB:aux4}) in (\ref{eqn:UBFirstStep1})-(\ref{eqn:UBFirstStep2}) to get, after minor algebraic manipulations,
\beqano
\bE\left[ \sum_{l=1}^L \bQ_l\e \right] &\leq&
    \frac{((\sigma_\Sigma\e)^2 + \nu_\Sigma^2 + \epsilon^2)}{2 \epsilon}
    +\frac{S_{max}}{2} + L \sqrt{\frac{N_2 \>  S_{max}}{\epsilon} }
            \> = \> \frac{\zeta\e}{2\epsilon} + \bB_1\e,
\eeqano
where $\zeta\e \triangleq (\sigma_\Sigma\e)^2 + \nu_\Sigma^2 +\epsilon^2$ and $\bB_1\e \triangleq L \sqrt{\frac{N_2  \>S_{max}}{\epsilon} } + \frac{S_{max}}{2},$ which is $o(1/\epsilon)$ as claimed. Then, (\ref{eqn:JSQ_UB_HT_firstmoment}) follows immediately by taking the limit of both sides.
\hfill
\end{proof}

\subsection{Upper Bounds and Heavy-Traffic Optimality of MW Scheduling}
\label{sec:UB_MWS}

The following proposition yields upper bounds on the steady-state weighted total queue-length under MW Scheduling, and then establishes the first-moment heavy-traffic optimality of MWS as the network load approaches the boundary of the capacity region of the network.

\begin{proposition}\label{prop:MWS_UB_firstmoment}
Consider the scheduling problem under the exogenous arrival vector process $\{\vA\uve[t]\}_t$ with mean vector $\vlam\uve \in Int(\cR)$ as defined in (\ref{eqn:projection_k}), and with the variance vector $(\vsig\uve)^2\triangleq \left((\sigma_l\uve)^2\right)_{l=1}^L$. Then, under MW Scheduling, the limiting steady-state queue-vector $\bvQ\uve$ satisfies
 \beqa
 \bE\left[\La \vc\uk, \bvQ\uve \Ra\right] &\leq& \frac{\zeta\uvek}{2\euk} + \bB_1\uvek,\label{eqn:MWS_UB_firstmoment}
 \eeqa
where we recall that $\zeta\uvek \triangleq \La (\vc\uk)^2,(\vsig\uve)^2 \Ra$ is defined in Lemma~\ref{lem:Scheduling_LBs}, and $\bB_1\uvek$
is $o\left(\frac{1}{\euk}\right),$ i.e., $\dsp\lim_{\euk\downarrow 0} \euk\bB_1\uvek = 0.$
Consequently, in the \textbf{heavy traffic limit}, we have
 \beqa
 \limsup_{\euk\downarrow 0}
    \euk\bE\left[\La \vc\uk, \bvQ\uve \Ra\right]
        &\leq& \frac{\zeta\uk}{2}
            \label{eqn:MWS_UB_HT_firstmoment},
 \eeqa
where $\zeta\uk \triangleq \La (\vc\uk)^2,\vsig^2 \Ra.$

Hence, comparing the heavy-traffic lower-bound (\ref{eqn:Scheduling_LB_HT_firstmoment}) for any feasible policy to the heavy-traffic upper-bound (\ref{eqn:MWS_UB_HT_firstmoment}) for MW Scheduler establishes the \emph{first moment heavy-traffic optimality of MW Scheduling Policy}.
\end{proposition}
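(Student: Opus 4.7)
The plan is to mirror the three-step strategy used in Proposition~\ref{prop:JSQ_UB_firstmoment}, with the weight vector $\vc\uk$ replacing $\vOne/\sqrt{L}$. First, apply Lemma~\ref{lem:UBFirstSteps} with $\vc=\vc\uk$ to obtain the steady-state identity
\[
\bE\!\left[\La \vc\uk,\bvQ\uve\Ra\La \vc\uk,\vS-\vA\Ra\right]
= \tfrac{1}{2}\bE\!\left[\La \vc\uk,\vA-\vS\Ra^2\right] + \tfrac{1}{2}\bE\!\left[\La \vc\uk,\vU\Ra^2\right] + \bE\!\left[\La \vc\uk,\bvQ+\vA-\vS\Ra\La \vc\uk,\vU\Ra\right].
\]
The strategy is then to lower-bound the LHS by $\epsilon\uk\bE[\La\vc\uk,\bvQ\uve\Ra]$ modulo a small correction, upper-bound each term on the RHS using the drift identities together with the state-space collapse result of Proposition~\ref{prop:MWS_Qperp_boundedness}, and then rearrange.

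For the LHS, the new subtlety compared to JSQ is that $\vS$ depends on $\vQ$ under MW. I will use the Stolyar-style inequality $\La \vQ,\vS(\vQ)\Ra\geq \La \vQ,\vr\Ra$ for any $\vr\in\cR$, specialized to $\vr=\vlam\uvek\in\cR$, which lies in $\cR$ since $k\in\cK_{\vlam\uve}$. Decomposing $\vQ=\vQ_\parallel\uk+\vQ_\perp\uk$ and using $\La\vc\uk,\vlam\uvek\Ra=b\uk$ yields $\La\vc\uk,\vQ\Ra\La\vc\uk,\vS\Ra\geq b\uk\La\vc\uk,\vQ\Ra + \La\vQ_\perp\uk,\vlam\uvek-\vS\Ra$. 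Taking expectations, bounding the remainder by $\sqrt{\bE[\|\bvQ_\perp\uk\|^2]\,\bE[\|\vlam\uvek-\vS\|^2]}$ via Cauchy--Schwartz, and invoking Proposition~\ref{prop:MWS_Qperp_boundedness} together with the boundedness of $\cS$ and $\cR$, produces LHS $\geq \epsilon\uk\bE[\La\vc\uk,\bvQ\uve\Ra]-C_0$ for a constant $C_0$. For the RHS, the steady-state drift of $\La\vc\uk,\vQ\Ra$ gives $\bE[\La\vc\uk,\vU\Ra]=\bE[\La\vc\uk,\vS\Ra]-(b\uk-\epsilon\uk)\leq\epsilon\uk$ since $\La\vc\uk,\vS\Ra\leq b\uk$, and the bounded-range variance inequality $\mathrm{Var}(X)\leq M(M-\bE X)$ applied with $M=b\uk$ gives $\mathrm{Var}(\La\vc\uk,\vS\Ra)\leq b\uk\epsilon\uk$. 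Combined with independence of $\vA$ from $\vS(\vQ)$, this yields $\bE[\La\vc\uk,\vA-\vS\Ra^2]\leq\zeta\uvek+O(\epsilon\uk)$ and $\bE[\La\vc\uk,\vU\Ra^2]\leq b\uk\bE[\La\vc\uk,\vU\Ra]=O(\epsilon\uk)$. The cross-term is treated exactly as in the intuition around (\ref{eqn:UB:CrossTerm}): Lemma~\ref{lem:UnusedService} reduces it to $-\bE[\La\bvQ_\perp[t+1],\vU\Ra]-\bE[\La\vc\uk,\vU\Ra^2]$, bounded by $\sqrt{\bE[\|\bvQ_\perp\|^2]\,\bE[\|\vU\|^2]}$ via Cauchy--Schwartz, with the first factor uniformly controlled by state-space collapse.

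Assembling these estimates gives $\epsilon\uk\bE[\La\vc\uk,\bvQ\uve\Ra]\leq \zeta\uvek/2+o(1)$ as $\epsilon\uk\downarrow 0$, which yields (\ref{eqn:MWS_UB_firstmoment}) with $\bB_1\uvek=o(1/\epsilon\uk)$ and, upon comparison with the matching lower bound of Lemma~\ref{lem:Scheduling_LBs}, the heavy-traffic optimality (\ref{eqn:MWS_UB_HT_firstmoment}). The main obstacle is the cross-term: for JSQ, the choice $\vc=\vOne/\sqrt{L}$ directly provided $\bE[\|\vU\|^2]\leq S_{max}\sqrt{L}\bE[\La\vc,\vU\Ra]=O(\epsilon)$, but for MW with a general normal $\vc\uk$ (which may have zero components) this componentwise bound is unavailable. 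Instead, one must argue $\bE[\|\vU\|^2]\downarrow 0$ from the MW optimality combined with state-space collapse, exploiting that $k\in\cK_{\vlam\uve}^\circ$ places $\vlam\uvek$ in $\mathrm{Relint}(\cF\uk)$ so that MW concentrates its mean service vector on $\cF\uk$ near $\vlam\uvek$ as $\epsilon\uk\downarrow 0$, driving the unused service to zero.
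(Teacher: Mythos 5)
Your overall skeleton (Lemma~\ref{lem:UBFirstSteps} with $\vc=\vc\uk$, state-space collapse, and your variance trick $\bE[(b\uk-\La \vc\uk,\vS(\bvQ)\Ra)^2]\le b\uk\,(b\uk-\bE[\La \vc\uk,\vS(\bvQ)\Ra])\le b\uk\epsilon\uk$, which is an elementary substitute for the paper's Claim~\ref{claim:pik}) is sound, but your treatment of the left-hand side has a genuine gap that defeats both conclusions. Using $\La \vQ,\vS(\vQ)\Ra\ge\La\vQ,\vlam\uvek\Ra$ and Cauchy--Schwarz on $\bE[\La\bvQ_\perp\uk,\vlam\uvek-\vS\Ra]$ only gives a correction $C_0=\sqrt{N_2\uk\,\bE[\|\vlam\uvek-\vS\|^2]}$, and $\bE[\|\vlam\uvek-\vS\|^2]$ is $\Theta(1)$, not $o(1)$: MW picks extreme points of $\cS$, which stay at constant distance from $\vlam\uvek$ even when they lie on $\cF\uk$. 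A non-vanishing additive error in the drift identity becomes $C_0/\epsilon\uk$ after dividing by $\epsilon\uk$, so your $\bB_1\uvek$ is $\Theta(1/\epsilon\uk)$ rather than $o(1/\epsilon\uk)$, and the heavy-traffic bound degrades to $\zeta\uk/2+C_0$, which no longer matches the lower bound (\ref{eqn:Scheduling_LB_HT_firstmoment}); your claimed assembly ``$\epsilon\uk\bE[\La\vc\uk,\bvQ\uve\Ra]\le\zeta\uvek/2+o(1)$'' does not follow from ``LHS $\ge\epsilon\uk\bE[\La\vc\uk,\bvQ\uve\Ra]-C_0$.'' The paper avoids this by writing the left side exactly as $\epsilon\uk\bE[\|\bvQ_\parallel\uk\|]-\bE[\|\bvQ_\parallel\uk\|(b\uk-\La\vc\uk,\vS(\bvQ)\Ra)]$ and exploiting the cone property (\ref{eqn:MWS:def:thetak}): whenever $\vQ$ lies within angle $\theta\uk$ of $\vc\uk$ the MW schedule is on $\cF\uk$ and the shortfall is zero, while outside the cone $\|\bvQ_\parallel\uk\|\le\cot(\theta\uk)\|\bvQ_\perp\uk\|$; combining this with $\bE[(b\uk-\La\vc\uk,\vS(\bvQ)\Ra)^2]=O(\epsilon\uk)$ and state-space collapse gives an $O(\sqrt{\epsilon\uk})$ correction. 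Some mechanism of this kind, tying the large factor $\|\bvQ_\parallel\uk\|$ to $\|\bvQ_\perp\uk\|$ on the event that MW is off the face, is exactly what your fixed-comparison-point argument discards.

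Your diagnosis of the cross-term obstacle is right, but the proposed fix is wrong: $\bE[\|\vU(\bvQ)\|^2]$ does \emph{not} tend to zero in general. Coordinates $l$ with $c_l\uk=0$ are unconstrained by the $k$th hyperplane, remain lightly loaded in the limit, and incur unused service with probability bounded away from zero (e.g., two non-interfering links with $\vc\uk=(1,0)$, $\lambda_2=1/2$: queue $2$ is empty a positive fraction of time, so $\bE[U_2^2]\not\to0$ even though $\vlam\uvek\in Relint(\cF\uk)$). The correct repair, which is what the paper does, is to observe that $Q_l[t+1]U_l[t]=0$ makes the zero-$c_l\uk$ coordinates drop out of $\La-\bvQ_\perp[t+1],\vU\Ra$ identically, restrict to $\cL_{++}\uk$, and bound $\bE[\|\vUtil\|^2]\le\frac{S_{max}}{c_{min}\uk}\bE[\La\vc\uk,\vU(\bvQ)\Ra]\le\frac{S_{max}}{c_{min}\uk}\epsilon\uk$, so the cross term is $O(\sqrt{\epsilon\uk})$. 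With that substitution and the cone-geometry step above, your remaining estimates (the $O(\epsilon\uk)$ bounds on $\bE[\La\vc\uk,\vA-\vS\Ra^2]-\zeta\uvek$ and on $\bE[\La\vc\uk,\vU\Ra^2]$) are correct and the proof closes as in the paper.
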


\begin{proof}
We temporarily omit the superscript $\uve$ associated with the arrival and queue-length processes for ease of exposition. Before we investigate (\ref{eqn:UBFirstStep1}) and (\ref{eqn:UBFirstStep2}) for the MW Scheduler, we make several remarks. We first use the fact that the mean drift of $\La \vc\uk,\vQ \Ra$ must be zero in steady-state to get:
\beqa
\bE[\La \vc\uk, \vU(\bvQ) \Ra] &=& \La \vc\uk, \bE[\vS(\bvQ)]\Ra
        - \La \vc\uk, \vlam \Ra \nonumber\\
&\stackrel{(a)}{=}& \La \vc\uk, \bE[\vS(\bvQ)]\Ra - (b\uk-\euk) \nonumber\\
&\stackrel{(b)}{\leq}&  \euk, \label{eqn:MWS:UbarIsSmall}
\eeqa
where (a) follows from (\ref{eqn:projection_k}) and the fact that $\La \vc\uk, \vlam\uk \Ra = b\uk$ since $\vlam\uk\in\cH\uk;$ and (b) follows from the facts that $\bE[\vS(\bvQ)]$ must be in $\cR$ and that $\La \vc\uk, \vr \Ra \leq b\uk$ for all $\vr\in\cR$ by (\ref{eqn:SchedulingRateRegionR}).

Next, we define
\beqa
\pi\uk &\triangleq& \bP\left(\La \vc\uk, \vS(\bvQ)\Ra = b\uk\right), \label{eqn:MWS:def:pik}
\eeqa
to be the fraction of time that the service rate vector is selected from face $\cF\uk$ by the MW Scheduler in steady-state. Also, we define
\beqa
\gamma\uk &\triangleq& \min\{b\uk-\La \vc\uk, \vr \Ra: \textrm{ for all } \vr\in \cS\setminus \cF\uk\}.
\label{eqn:MWS:def:gammak}
\eeqa
Since the set $\cS$ is discrete and finite, $\gamma\uk$ is a \emph{strictly positive} number with a constant value (independent of $\veps$) associated with the geometry of the capacity region $\cR.$ This constant helps us establish the following useful claim associated with $\pi\uk.$
\begin{clm}\label{claim:pik}
For any $\euk\in (0,\gamma\uk),$ we have
\beqa
(1-\pi\uk) &\leq& \frac{\euk}{\gamma\uk}
\label{eqn:MWS:piBound}
\eeqa
where the upper-bound is $O(\euk)$, i.e., vanishes as $\euk\downarrow 0.$
\end{clm}
\begin{proof}(Claim~\ref{claim:pik})
We start with the observation that
\beqano
\bE[\La \vc\uk, \vS(\bvQ)\Ra] &\geq & \La \vc\uk, \vlam \Ra \> = \> b\uk - \euk
\eeqano
where the inequality follows from the stability of the queueing network (cf. Proposition~\ref{lem:MWS_BddMoments}), and the equality follows from the utilization of (\ref{eqn:projection_k}). We can split the left-hand-side into two parts by using the definition of $\pi\uk$ to write
\beqano
\pi\uk b\uk +
    \bE\left[\La \vc\uk, \vS(\bvQ)\Ra
        \> \one\left(\La \vc\uk, \vS(\bvQ)\Ra\neq b\uk\right)\right]
            &\geq & (b\uk - \euk),
\eeqano
which, when re-arranged, leads to the following lower-bound on the expectation:
\beqa
    \bE\left[\La \vc\uk, \vS(\bvQ)\Ra
        \> \one\left(\La \vc\uk, \vS(\bvQ)\Ra\neq b\uk\right)\right]
            &\geq & b\uk \> (1-\pi\uk) - \euk, \label{eqn:MWS:claim:aux1}
\eeqa
Separately, we can upper-bound the same expectation as
\beqano
\bE\left[\La \vc\uk, \vS(\bvQ)\Ra
        \> \one\left(\La \vc\uk, \vS(\bvQ)\Ra=b\uk\right)\right]
            &\leq& (b\uk - \gamma\uk) \> \bE\left[
        \> \one\left(\La \vc\uk, \vS(\bvQ)\Ra \neq b\uk\right)\right] \\
        &=& (b\uk - \gamma\uk) \> (1-\pi\uk),
\eeqano
where the inequality follows from the definition of $\gamma\uk$ in (\ref{eqn:MWS:def:gammak}), and from the equality from the definition of $\pi\uk$ in (\ref{eqn:MWS:def:pik}). Using this bound together with (\ref{eqn:MWS:claim:aux1}) yields (\ref{eqn:MWS:piBound}).
\hfill
\end{proof}
Claim~\ref{claim:pik} implies the following additional fact:\\
$\bE\left[\left(b\uk-\La \vc\uk, \vS(\bvQ)\Ra\right)^2\right]$
\beqa
    &=&  (1-\pi\uk) \bE\left[ \left(b\uk-\La \vc\uk, \vS(\bvQ)\Ra\right)^2 \> \>|\> \left(\La \vc\uk, \vS(\bvQ)\Ra \neq b\uk\right)\right]\nonumber\\
    &\leq& \frac{\euk}{\gamma\uk}
                \left((b\uk)^2+\La \vc\uk, S_{max} \vOne\Ra^2\right)
                    \label{eqn:MWS:diffSq}\\
    &=& O(\euk) \nonumber
\eeqa
where the inequality follows from (\ref{eqn:MWS:piBound}) and the fact that $S_l \leq S_{max}$ for all $l.$ This result establishes in a certain probabilistic sense that $\La \vc\uk, \vS(\bvQ)\Ra$ is close $b\uk$ if $\euk$ is small. This result confirms the intuition that when $\euk$ is small, i.e., when $\vlam$ is close to the face $\cF\uk,$ the MW Scheduler must mostly select service rates on $\cF\uk$  so that the average service rate vector exceeds the given arrival rate vector componentwise to ensure stability.

Our final remark before studying (\ref{eqn:UBFirstStep1}) and (\ref{eqn:UBFirstStep2}) concerns the geometry of the scheduling capacity region $\cR.$ Since the number of possible rate vectors is finite, the number of faces in the rate region is finite. Therefore, for each face $\cF\uk$ of the region $\cR,$ there exists an angle $\theta\uk \in (0,\pi/2]$ such that
\beqa
\La \vc\uk, \vS(\vQ) \Ra &=& b\uk , \qquad \textrm{for all }  \vQ \mbox{ satisfying } \frac{\|\vQ_\parallel\uk\|}{\|\vQ\|}\geq \cos(\theta\uk), \label{eqn:MWS:def:thetak}
\eeqa
where $\vS(\vQ)$ is the service rate vector selected by the MW Scheduler for the given $\vQ$ as in Definition~\ref{def:MWS}.
Note that $\theta\uk$ identifies a \emph{cone} around the line $\vc\uk$ such that any $\vQ$ in the cone leads to a rate allocation on the face $\cF\uk.$

We are now ready to study each term in (\ref{eqn:UBFirstStep1}) and (\ref{eqn:UBFirstStep2}) with $\vc:=\vc\uk$ to establish (\ref{eqn:MWS_UB_firstmoment}).\\

$\bE\left[ \La \vc\uk, \bvQ \Ra \La \vc\uk, \vS(\bvQ)-\vA(\bvQ) \Ra\right]$
\beqa
    &=& \bE\left[ \|\bvQ_\parallel\| \right] (b\uk - \La \vc\uk, \vlam \Ra)
        - \bE\left[ \|\bvQ_\parallel\| (b\uk - \La \vc\uk, \vS(\bvQ)\Ra) \right] \nonumber\\
    &\stackrel{(a)}{=}& \euk\bE\left[ \|\bvQ_\parallel\| \right]
        - \bE\left[ \|\bvQ\| \cos(\theta_{\bvQ,\bvQ_\parallel\uk}) (b\uk-\La \vc\uk, \vS(\bvQ)\Ra) \right] \nonumber\\
    &\stackrel{(b)}{=}& \euk\bE\left[ \|\bvQ_\parallel\| \right]
        - \bE\left[ \|\bvQ\| \cos(\theta_{\bvQ,\bvQ_\parallel\uk}) \one(\theta_{\bvQ,\bvQ_\parallel\uk}> \theta\uk) (b\uk-\La \vc\uk, \vS(\bvQ)\Ra) \right] \nonumber\\
    &\stackrel{(c)}{=}& \euk\bE\left[ \|\bvQ_\parallel\| \right]
        - \bE\left[ \|\bvQ_\perp\uk\| \cot(\theta_{\bvQ,\bvQ_\parallel\uk}) \one(\theta_{\bvQ,\bvQ_\parallel\uk}> \theta\uk) (b\uk-\La \vc\uk, \vS(\bvQ)\Ra) \right] \nonumber\\
    &\stackrel{(d)}{\geq}& \euk\bE\left[ \|\bvQ_\parallel\| \right]
        - \bE\left[ \|\bvQ_\perp\uk\| \one(\theta_{\bvQ,\bvQ_\parallel\uk}> \theta\uk) (b\uk-\La \vc\uk, \vS(\bvQ)\Ra) \right] \cot(\theta\uk) \nonumber\\
    &\geq& \euk\bE\left[ \|\bvQ_\parallel\| \right]
        - \bE\left[ \|\bvQ_\perp\uk\| (b\uk-\La \vc\uk, \vS(\bvQ)\Ra) \right] \cot(\theta\uk) \nonumber\\
    &\stackrel{(e)}{\geq}& \euk\bE\left[ \|\bvQ_\parallel\| \right]
        - \cot(\theta\uk) \sqrt{\bE\left[ \|\bvQ_\perp\uk\|^2\right] \bE\left[ (b\uk-\La \vc\uk, \vS(\bvQ)\Ra)^2 \right]} \nonumber\\
    &\stackrel{(f)}{\geq}& \euk\bE\left[ \|\bvQ_\parallel\| \right]
        - \cot(\theta\uk) \sqrt{\frac{\euk N_2\uk }{\gamma\uk}
                \left((b\uk)^2+\La \vc\uk, S_{max} \vOne\Ra^2\right),}
\label{eqn:MWS:UB:aux1}
\eeqa
where the step (a) follows from (\ref{eqn:projection_k}) and the definition of the angle $\theta_{\vx,\vy}$ between two vectors $\vx$ and $\vy$ given in (\ref{eqn:RL_defs}); (b) is true from the definition of $\theta\uk;$ (c) is true since $\|\bvQ_\perp\uk\| = \|\bvQ\| \sin(\theta_{\bvQ,\bvQ_\parallel\uk});$ (d) is true since cotangent function is a decreasing nonnegative-valued function in $(0,\pi/2];$ (e) follows from Cauchy-Schwartz Inequality; and (f) follows from Proposition~\ref{prop:MWS_Qperp_boundedness} and (\ref{eqn:MWS:diffSq}). We note that, in (\ref{eqn:MWS:UB:aux1}), the first term is $O(\euk)$ while the second term is $O(\sqrt{\euk}).$\\

Next, we bound the first right-hand-side term in (\ref{eqn:UBFirstStep1}):
$\bE\left[\La \vc\uk, \vA(\bvQ)-\vS(\bvQ)\Ra^2\right]$
\beqa
    &\stackrel{(a)}{=}& \bE\left[(\La \vc\uk, \vA \Ra-b\uk)^2\right]
        + \bE\left[(b\uk-\La \vc\uk, \vS(\bvQ) \Ra)^2\right] \nonumber \\
    & & + 2 \> \left(\La \vc\uk, \vlam \Ra-b\uk\right)\bE\left[b\uk-\La \vc\uk, \vS(\bvQ) \Ra\right] \nonumber\\
    &\stackrel{(b)}{=}& \bE\left[(\La \vc\uk, \vA \Ra-b\uk)^2\right]
        + \bE\left[(b\uk-\La \vc\uk, \vS(\bvQ) \Ra)^2\right] \nonumber
        - 2 \> \euk\bE\left[b\uk-\La \vc\uk, \vS(\bvQ) \Ra\right] \nonumber\\
    &\stackrel{(c)}{\leq}& \bE\left[(\La \vc\uk, \vA-\vlam \Ra + \La\vc\uk,\vlam\Ra-b\uk)^2 \right]
        + \bE\left[(b\uk-\La \vc\uk, \vS(\bvQ) \Ra)^2\right] \nonumber \\
    &\stackrel{(d)}{=}& \bE\left[\La \vc\uk, \vA-\vlam \Ra^2\right]
                    + 2 \euk\La \vc\uk, \bE[\vA]-\vlam \Ra + (\euk)^2
        + \bE\left[(b\uk-\La \vc\uk, \vS(\bvQ) \Ra)^2\right] \nonumber \\
    &\stackrel{(e)}{\leq}& \La (\vc\uk)^2, \vsig^2 \Ra + (\euk)^2
        + \frac{\euk}{\gamma\uk}
                \left((b\uk)^2+\La \vc\uk, S_{max} \vOne\Ra^2\right) \nonumber\\
    &\stackrel{(f)}{=}& \zeta\uvek
        + \frac{\euk}{\gamma\uk}
                \left((b\uk)^2+\La \vc\uk, S_{max} \vOne\Ra^2\right) \label{eqn:MWS:UB:aux2}
\eeqa
where the step (a) follows simply from expanding the square after adding and subtracting $b\uk,$ and noting that the exogenous arrival rate vector $\vA$ has mean $\vlam$ and is independent of the service rate vector $\vS(\bvQ)$; (b) follows from (\ref{eqn:projection_k}); (c) follows from adding and subtracting $\vlam$ in the first expression, and from noting that $b\uk-\La \vc\uk, \vr \Ra \geq 0$ for any $\vr \in \cR$ and hence for $\vS(\bvQ)$; (d) follows, again, from the definition of $\euk$; (e) follows from (\ref{eqn:MWS:diffSq}) and uses the notation $\vsig^2$ for the variance vector for the arrival process; and (f) uses the definition of the parameter $\zeta\uvek$. Notice that all the terms except the first term vanishes in (\ref{eqn:MWS:UB:aux2}) as $\euk\downarrow 0.$

Next, we bound the last term in (\ref{eqn:UBFirstStep1}) using (\ref{eqn:MWS:UbarIsSmall}) and the fact that $U_l\leq S_{max}$ for all $l:$
\beqa
\bE\left[\La \vc\uk, \vU(\bvQ)\Ra^2\right]
    &\leq& \La \vc, S_{max} \vOne\Ra \> \bE\left[\La \vc\uk, \vU(\bvQ)\Ra \right]
    \> \leq \> \euk\La \vc\uk, S_{max} \vOne\Ra,  \label{eqn:MWS:UB:aux3}
\eeqa
which is also vanishing as $\euk\downarrow 0.$

Finally, we consider the term (\ref{eqn:UBFirstStep2}). While the argument essentially follows that of (\ref{eqn:UB:CrossTerm}), we need to pay more attention to the zero entries of $\vc\uk,$ which did not exist in the JSQ case. To that end, we define $\cL\uk_{++} \triangleq \{l\in\{1,\cdots,L\}: c_l\uk > 0\}$ to denote the strictly positive entries of $\vc\uk.$ Then, we focus on only these components by defining the following restricted vectors living in the $|\cL_{++}\uk|$-dimensional real space:
\beqano
\vctil\uk \triangleq (c_l\uk)_{l\in\cL_{++}\uk},
    \quad \vQtil \triangleq (Q_l)_{l\in\cL_{++}\uk},
    \quad \vUtil \triangleq (U_l)_{l\in\cL_{++}\uk}.
\eeqano
For convenience, we will denote $\vQtil[t+1] = (Q_l[t+1])_{l\in\cL_{++}\uk}$ as $\vQtil^+.$ In this reduced space, we further define the projection and perpendicular of $\vQtil^+$ and $\vUtil$ with respect to $\vctil\uk$ as:
\beqano
\vQtil^+_\parallel \triangleq \La \vctil\uk, \vQtil^+\Ra \> \vctil\uk,
    \quad \vQtil^+_\perp \triangleq \vQtil^+ - \vQtil^+_\parallel,
    \quad \vUtil_\parallel \triangleq \La \vctil\uk, \vUtil\Ra \> \vctil\uk,
    \quad \vUtil_\perp \triangleq \vUtil - \vUtil_\parallel.
\eeqano
Now, since $\vctil\uk \succ \vZero$ satisfies $\|\vctil\uk\|=1,$ the statement of Lemma~\ref{lem:UnusedService} applies to $\vctil\uk, \vQtil^+,$ and $\vUtil$ to yield
\beqano
\La \vctil\uk, \vQtil^+\Ra \La \vctil\uk, \vUtil\Ra
    &=& \La -\vQtil^+_\perp, \vUtil \Ra.
\eeqano
This result, together with the fact that $\La \vc\uk, \vQ^+ \Ra \La \vc\uk, \vU \Ra = \La \vctil\uk, \vQtil^+\Ra \La \vctil\uk, \vUtil\Ra,$ where we used the notation $\vQ^+\triangleq \vQ[t+1]$, allows us to bound (\ref{eqn:UBFirstStep2}) as follows (in the following we temporarily use $\bE_{\bvQ}$ to imply that $\vQ$ is distributed as $\bvQ$ in the expectation):
\beqa
(\ref{eqn:UBFirstStep2}) &\leq&
\bE_{\bvQ}\left[\La \vc\uk, \vQ^+ \Ra \La \vc\uk, \vU \Ra\right]\nonumber\\
&=& \bE_{\bvQ}\left[\La \vctil\uk, \vQtil^+\Ra \La \vctil\uk, \vUtil\Ra\right]\nonumber\\
&=& \bE_{\bvQ}\left[\La -\vQtil^+_\perp, \vUtil \Ra\right]\nonumber\\
&\stackrel{(a)}{\leq}& \sqrt{\bE_{\bvQ}\left[\|\vQtil_\perp^+\|^2\right]
\bE_{\bvQ}\left[\|\vUtil\|^2\right]}\nonumber \\
&\stackrel{(b)}{\leq}& \sqrt{\bE_{\bvQ}\left[\|\vQtil_\perp\|^2\right]
\bE_{\bvQ}\left[\|\vUtil\|^2\right]}, \label{eqn:MWS:UB:aux4a}
\eeqa
where (a) follows from Cauchy-Schwartz inequality, and (b) is true since the distribution of $\vQ^+$ is the same as $\vQ$ under steady-state. Next, we bound the expectations in (\ref{eqn:MWS:UB:aux4a}).

The first expectation of (\ref{eqn:MWS:UB:aux4a}) satisfies:
\beqa
\bE_{\bvQ}\left[\|\vQtil_\perp\|^2\right] &\leq& \bE\left[\|\bvQ_\perp\|^2\right] \> \leq \> N_2\uk, \label{eqn:MWS:UB:aux4b}
\eeqa
where the first inequality follows from the fact that $\vQ_\perp$ and $\vQtil_\perp$ are equal for all positions $\cL_{++}\uk,$ while $\vQ_\perp$ potentially contains additional non-zero components, and hence cannot be smaller in magnitude. The second inequality follows from Proposition~\ref{prop:MWS_Qperp_boundedness} that establishes the state-space collapse of MWS.

The second expectation of (\ref{eqn:MWS:UB:aux4a}) satisfies:
\beqa
\bE_{\bvQ}\left[\|\vUtil\|^2\right]
&=& \bE_{\bvQ}\left[\sum_{l\in\cL_{++}\uk}\Util_l^2\right] \> \> \leq \> \> \frac{S_{max}}{c_{min}\uk} \bE\left[\La \vc\uk,\vU(\bvQ) \ \Ra\right], \label{eqn:MWS:UB:aux4c}
\eeqa
where $c_{min}\uk \triangleq \dsp \min_{m\in \cL_{++}\uk} c_m\uk > 0$. Here, the last inequality is true since $c_l\uk \geq c_{min}\uk$ for all $l\in\cL_{++}\uk$, and $\Util_l \leq S_{max}$ for all $l.$

Substituting the bounds (\ref{eqn:MWS:UB:aux4b}) and (\ref{eqn:MWS:UB:aux4c}) back in (\ref{eqn:MWS:UB:aux4a}) together with the fact (\ref{eqn:MWS:UbarIsSmall}) yields:
\beqa
(\ref{eqn:UBFirstStep2}) &\leq& \sqrt{\euk N_2\uk \frac{S_{max}}{c_{min}\uk}} \label{eqn:MWS:UB:aux4}
\eeqa

To complete, we reintroduce the $\uve, \uvek$ superscript to emphasize the dependence on $\veps$ and substitute the derived bounds (\ref{eqn:MWS:UB:aux1}), (\ref{eqn:MWS:UB:aux2}), (\ref{eqn:MWS:UB:aux3}), (\ref{eqn:MWS:UB:aux4}) in (\ref{eqn:UBFirstStep1}) and (\ref{eqn:UBFirstStep2}) to get
 \beqano
 \bE\left[\La \vc\uk, \bvQ\uve \Ra\right] &\leq& \frac{\zeta\uvek}{2\euk} + \bB_1\uvek,
 \eeqano
where
\beqano
\bB_1\uvek &\triangleq& \cot(\theta\uk)
\sqrt{\frac{N_2\uk \left((b\uk)^2+\La \vc\uk, S_{max} \vOne\Ra^2\right)}{\euk\gamma\uk}}
    \\&&+\frac{\left((b\uk)^2+\La \vc\uk, S_{max} \vOne\Ra^2\right)}{2 \gamma\uk}
   + \frac{\La \vc\uk, S_{max} \vOne\Ra}{2}
 + \sqrt{ \frac{N_2\uk \> S_{max}}{\euk\> c_{min}\uk}},
\eeqano
which is $o(1/\euk)$ as claimed. Then, (\ref{eqn:MWS_UB_HT_firstmoment}) immediately follows by taking the limit as $\euk$.
\hfill
\end{proof}

\section{Some Extensions of the Results on the Scheduling Problem}
\label{sec:extensions}

In this section, we obtain bounds on the $n^{th}$ moment of the steady-state queue lengths for the scheduling problem, and we also discuss how to handle channel fading in the derivations of the bounds. Both of these extensions introduce some challenges, but can be essentially addressed by the methodology presented in the previous sections. The $n^{th}$ moment analysis can also be performed for JSQ routing, but is omitted here since it is quite similar to the scheduling case.

\subsection{$n^{th}$ Moment Analysis}
\label{sec:nthMomentAnalysis}

We follow the steps outlined in Section~\ref{sec:MethodologyOutline} to first develop lower bounds on the $n^{th}$ moment of steady-state queue-lengths, and utilize the state-space collapse result of Section~\ref{sec:Collapse} to find corresponding upper bounds. Then, we will establish, as before, the heavy-traffic optimality of these policies by showing that the appropriate dominant terms of the lower and upper bounds match as the arrival rate vector approaches one of the faces of the capacity region $\cR.$

\paragraph{1. Lower Bounds on the $n^{th}$ moment:} We first extend the approach applied in Section~\ref{sec:LBs} to the $n^{th}$ moment of the lower-bounding system in Figure~\ref{fig:LBqueue}.

\begin{lemma} \label{lem:nthMomemnt:LBs}
For the system of Figure~\ref{fig:LBqueue} with a given service process $\{\beta[t]\}_t$, consider the arrival process $\{\alpha\e[t]\}_t,$ parameterized by $\epsilon>0,$ with mean $\alpha\e$ satisfying $\epsilon = \beta-\alpha\e$, and with variance denoted as $\sigma_{\alpha\e}^2$. Let the queue-length process, denoted by $\{\Phi\e[t]\}_t$, evolve as in (\ref{eqn:LB_Qevolve}) with $\alpha[t]:=\alpha\e[t]$.

Then, $\{\Phi\e[t]\}_t$ is a positive Harris recurrent Markov Chain (\cite{meytwe93}) for any $\epsilon>0,$ and converges in distribution to a random variable $\bPhi\e$ with all bounded moments.

Moreover, the $n^{th}$ moment of $\bPhi\e$ can be lower-bounded  as
 \beqa
 \epsilon^n \> \bE\left[\left(\bPhi\e\right)^n\right] &\geq& n! \left(\frac{\zeta\e}{2}\right)^n - {B_n\e}, \qquad n\geq 1, \label{eqn:LB_nthMoment}
 \eeqa
where $B_n\e$ vanishes with $\epsilon\downarrow 0,$ i.e., $\dsp \lim_{\epsilon \downarrow 0} B_n\e=0$

Therefore, in the \textbf{heavy-traffic limit} as the mean arrival rate approaches the mean service rate from below, i.e., as $\epsilon \downarrow 0,$ and assuming the variance $\sigma_{\alpha\e}^2$ converges to a constant $\sigma_\alpha^2$, the lower bounds become
 \beqa
 \liminf_{\epsilon^n \downarrow 0}
    \epsilon^n \>
        \bE\left[\left(\bPhi\e\right)^n\right] &\geq& n! \left(\frac{\zeta}{2}\right)^n, \qquad n\geq 1, \label{eqn:LB_HT_nthMoment},
 \eeqa
 where $\zeta \triangleq \sigma_{\alpha}^2 + \nu_\beta^2.$
\end{lemma}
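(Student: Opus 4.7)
The plan is to generalize the proof of Lemma~\ref{lem:LBs} by replacing the quadratic Lyapunov function $W(\Phi)=\Phi^2$ with the higher-order polynomial $V_{n+1}(\Phi)=\Phi^{n+1}$ and proceeding by induction on $n$, with base case $n=1$ supplied by (\ref{eqn:LB_firstmoment}). The positive Harris recurrence and finiteness of all moments are already established as in Lemma~\ref{lem:LBs} via Lemma~\ref{lem:Hajek}; these guarantee that the drift of $V_{n+1}$ has zero mean in steady state, i.e.\ $\bE[(\bPhi\e+V)^{n+1}\one(\bPhi\e+V\ge 0)]=\bE[(\bPhi\e)^{n+1}]$, where $V\triangleq \alpha\e-\beta$.

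The key identity I would exploit is that $\Phi[t+1]=(\Phi+V)^+$, so $\Phi[t+1]^{n+1}=(\Phi+V)^{n+1}\one(\Phi+V\ge 0)$, whence
\[
\bE[(\Phi+V)^{n+1}]-\bE[\Phi^{n+1}] \;=\; R_{n+1}\e,\qquad R_{n+1}\e\triangleq \bE[(\Phi+V)^{n+1}\one(\Phi+V<0)].
\]
The boundary term $R_{n+1}\e$ is absolutely bounded by $\beta_{\max}^{n+1}$, because $\Phi+V<0$ forces $\Phi<|V|\le\beta_{\max}$. Using the independence of $V$ and $\bPhi\e$, I would then binomially expand the left-hand side to obtain
\[
\sum_{k=1}^{n+1}\binom{n+1}{k}\bE[(\bPhi\e)^{n+1-k}]\,\bE[V^k] \;=\; R_{n+1}\e,
\]
and isolate the two leading moments by substituting $\bE[V]=-\epsilon$ and $\bE[V^2]=\zeta\e$ to get, after division by $(n+1)$ and multiplication by $\epsilon^{n-1}$,
\[
\epsilon^{n}\bE[(\bPhi\e)^{n}] \;=\; \tfrac{n\zeta\e}{2}\,\epsilon^{n-1}\bE[(\bPhi\e)^{n-1}] \;-\; \tfrac{\epsilon^{n-1}}{n+1}R_{n+1}\e \;-\; \tfrac{\epsilon^{n-1}}{n+1}\sum_{k=3}^{n+1}\binom{n+1}{k}\bE[V^k]\,\bE[(\bPhi\e)^{n+1-k}].
\]

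Applying the inductive hypothesis $\epsilon^{n-1}\bE[(\bPhi\e)^{n-1}]\ge (n-1)!(\zeta\e/2)^{n-1}-B_{n-1}\e$ to the leading term yields the desired lower bound $\epsilon^n\bE[(\bPhi\e)^n]\ge n!(\zeta\e/2)^n - B_n\e$, provided the two correction summations together form a $B_n\e$ that vanishes as $\epsilon\downarrow 0$. Since $|\bE[V^k]|$ is bounded by constants $C_k$ independent of $\epsilon$, and $|R_{n+1}\e|\le \beta_{\max}^{n+1}$, this reduces to showing $\epsilon^{n-1}\bE[(\bPhi\e)^{n+1-k}]\to 0$ for each $k=3,\dots,n+1$. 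Given an upper bound of the form $\bE[(\bPhi\e)^{j}]=O(\epsilon^{-j})$ for $j\le n-2$, each such term is $O(\epsilon^{k-2})\to 0$. The heavy-traffic statement (\ref{eqn:LB_HT_nthMoment}) then follows by taking $\liminf$ as $\epsilon\downarrow 0$, using $\zeta\e\to\zeta$.

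The main obstacle is precisely the control of those lower-order terms: establishing uniform upper bounds $\bE[(\bPhi\e)^{j}]=O(\epsilon^{-j})$ for $j\le n-2$. I would handle this by running the same moment identity in the opposite direction (bounding $|\bE[V^k]|$ and $|R_{j+1}\e|$ with absolute values) to produce matching upper bounds on each $\bE[(\bPhi\e)^{j}]$, doing a \emph{joint} induction that advances the upper and lower bounds simultaneously. Alternatively, one can invoke the exponential moment bound of Lemma~\ref{lem:Hajek} applied to $\Phi$ with Lyapunov constant $\theta^\star$ scaling like $\epsilon$, which automatically delivers $\bE[(\bPhi\e)^j]=O(\epsilon^{-j})$. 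Either route closes the induction and gives a $B_n\e$ of the form $\tfrac{n\zeta\e}{2}B_{n-1}\e + o(1)$, which inherits $B_n\e\to 0$ from the base case $B_1\e=\epsilon\beta_{\max}/2$.
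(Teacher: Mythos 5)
Your proposal is correct and follows essentially the same route as the paper: set the steady-state drift of the polynomial Lyapunov function $\Phi^{n+1}$ to zero (the paper uses $\Phi^n$, an index shift only), expand binomially using the independence of $V=\alpha\e-\beta$ from $\bPhi\e$, isolate the terms with $\bE[V]=-\epsilon$ and $\bE[V^2]=\zeta\e$, bound the boundary term (your $R_{n+1}\e$ is exactly the paper's $\bE[(-\chi)^{n+1}]$), and induct on $n$ with the base case from Lemma~\ref{lem:LBs}. The only real divergence is how the residual terms $\epsilon^{n-1}\bE[(\bPhi\e)^j]$, $j\le n-2$, are killed: the paper runs a second induction on the same recursion to show directly that these products vanish, whereas your first route (a joint induction giving matching upper bounds $\bE[(\bPhi\e)^j]=O(\epsilon^{-j})$ from the same identity, using $R_{j+1}\e\ge 0$ at order two and absolute-value bounds at higher orders) is a slightly stronger but equally valid way to close the gap. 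One caveat: your alternative second route, "invoke Lemma~\ref{lem:Hajek} with $\theta^\star$ scaling like $\epsilon$," is not immediate from Lemma~\ref{lem:Hajek} as stated, since that lemma asserts only existence of some $\theta^\star,C^\star$ without quantifying their dependence on the drift $\eta\sim\epsilon$; you would need the quantitative form of Hajek's bound to extract $\bE[(\bPhi\e)^j]=O(\epsilon^{-j})$ that way, so rely on your joint-induction route, which is self-contained.
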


\begin{proof}
See Appendix~\ref{app:nth moment}.
\hfill
\end{proof}

Lemma~\ref{lem:nthMomemnt:LBs} reveals an interesting fact that, in the heavy-traffic limit, the dominant terms of the $n^{th}$ moment of $\bPhi\e$ only depends on $\zeta,$ which in turn depends only on the variances of the arrival and service processes. This is consistent with Brownian approximations, which utilize central limit theorem to approximate the system behavior using the first two moments of the arrival and service processes.

We can now apply this generic result to the scheduling problem using the same construction as in Section~\ref{sec:LB_Scheduling}.

\begin{lemma}\label{lem:nthMoment:Scheduling_LBs}
For the scheduling problem of Section~\ref{sec:model_Scheduling} with a given set of feasible schedules $\cS,$ consider the exogenous arrival vector process $\{\vA\uve[t]\}_t$ with mean vector $\vlam\uve \in Int(\cR)$ as defined in (\ref{eqn:projection_k}), and with variance vector denoted as $(\vsig\uve)^2\triangleq \left((\sigma_l\uve)^2\right)_{l=1}^L$. Accordingly, let the queue-length process under MW Scheduling with this arrival process be denoted as $\{\vQ\uve[t]\}_t$, evolving as in (\ref{eqn:Qevolve}). Moreover, let $\bvQ\uve$ denote a random vector with the same distribution as the steady-state distribution of $\{\vQ\uve[t]\}_t$ (by Lemma~\ref{lem:MWS_BddMoments}).

Then, for each $k\in\{1,\cdots,K\},$ and with $\zeta\uvek \triangleq \La (\vc\uk)^2,(\vsig\uve)^2\Ra,$
 \beqa
 (\euk)^{n} \bE\left[\La \vc\uk, \bvQ\uve \Ra^n\right] &\geq& n!\left(\frac{\zeta\uvek}{2}\right)^n - {B_n\uvek} , \qquad n\geq 1, \label{eqn:Scheduling_LB_nthMoment}
 \eeqa
where $B_n\uvek$ vanishes as $\euk\downarrow 0.$

Further, consider the \textbf{heavy-traffic limit} $\euk\downarrow 0;$ and suppose that the variance vector $(\vsig\uve)^2$ approaches a constant vector $\vsig^2.$ Then, defining $\zeta\uk \triangleq \La (\vc\uk)^2,\vsig^2\Ra,$ we have
 \beqa
 \liminf_{\euk\downarrow 0}
    (\euk)^n \bE\left[\La \vc\uk, \bvQ\uve \Ra^n\right]
    &\geq& n! \left(\frac{\zeta\uk}{2}\right)^n, \qquad n\geq 1.
        \label{eqn:Scheduling_LB_HT_nthMoment}
 \eeqa

\end{lemma}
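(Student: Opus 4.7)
My plan is to mimic the derivation of Lemma~\ref{lem:Scheduling_LBs}, upgrading from the first-moment bound (\ref{eqn:LB_firstmoment}) to the $n^{th}$-moment bound (\ref{eqn:LB_nthMoment}) just established in Lemma~\ref{lem:nthMomemnt:LBs}. For each hyperplane $\cH\uk$, I would construct the same hypothetical single-server queue that was used in Section~\ref{sec:LB_Scheduling}: set $\alpha\uk[t] \triangleq \La \vc\uk,\vA\uve[t]\Ra$ and $\beta\uk[t] \triangleq b\uk$, and let $\Phi\uk\uve[t]$ evolve according to (\ref{eqn:LB_Qevolve}) with these inputs. Since the components of $\vA\uve[t]$ are independent by Assumption~\ref{as:scheduling:AandS} and $\vc\uk \succeq \vZero$, $\{\alpha\uk[t]\}_t$ is an i.i.d., nonnegative, bounded sequence with
\[
\bE[\alpha\uk[1]] \;=\; \La \vc\uk,\vlam\uve\Ra \;=\; b\uk - \epsilon\uk,
\qquad \mathrm{var}(\alpha\uk[1]) \;=\; \La (\vc\uk)^2, (\vsig\uve)^2\Ra \;=\; \zeta\uvek,
\]
using (\ref{eqn:projection_k}) for the mean and componentwise independence of arrivals for the variance. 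The service process is deterministic with value $b\uk$, so its variance is zero. Hence Lemma~\ref{lem:nthMomemnt:LBs} applies with gap parameter $\epsilon\uk$ and total variance parameter $\zeta\uvek$, giving
\[
(\epsilon\uk)^n \, \bE\!\left[(\bPhi\uk\uve)^n\right] \;\geq\; n!\,\left(\tfrac{\zeta\uvek}{2}\right)^n - \widetilde{B}_n\uvek,
\]
with $\widetilde{B}_n\uvek \downarrow 0$ as $\epsilon\uk \downarrow 0$.

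The second step is the stochastic dominance $\Phi\uk\uve[t] \le_{st} \La \vc\uk, \vQ\uve[t]\Ra$. For this I would note that, since $\vc\uk \succeq \vZero$, $\vU[t] \succeq \vZero$, and $\La \vc\uk,\vS[t]\Ra \leq b\uk$ for every $\vS[t] \in \cS$ (by (\ref{eqn:SchedulingRateRegionR})), the projected queue-length satisfies
\[
\La \vc\uk,\vQ\uve[t+1]\Ra \;\geq\; \bigl(\La \vc\uk,\vQ\uve[t]\Ra + \alpha\uk[t] - b\uk\bigr)^{+},
\]
so a coupling argument starting from $\Phi\uk\uve[0] = 0 \leq \La \vc\uk,\vQ\uve[0]\Ra$ and using the same arrival sample paths yields $\Phi\uk\uve[t]\leq \La \vc\uk,\vQ\uve[t]\Ra$ pathwise, hence in distribution. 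Passing to steady state (using Lemma~\ref{lem:MWS_BddMoments} for $\vQ\uve$ and Lemma~\ref{lem:nthMomemnt:LBs} for $\Phi\uk\uve$) and monotonicity of $x \mapsto x^n$ for $x\geq 0$, this gives
\[
\bE\!\left[\La \vc\uk,\bvQ\uve\Ra^n\right] \;\geq\; \bE\!\left[(\bPhi\uk\uve)^n\right],
\]
which combined with the previous display proves (\ref{eqn:Scheduling_LB_nthMoment}) with $B_n\uvek := \widetilde{B}_n\uvek$.

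The heavy-traffic statement (\ref{eqn:Scheduling_LB_HT_nthMoment}) then follows by taking $\liminf$ as $\epsilon\uk \downarrow 0$, using $(\vsig\uve)^2 \to \vsig^2$ so that $\zeta\uvek \to \zeta\uk$ by continuity and $B_n\uvek \to 0$. The main obstacle I anticipate is the same one already encountered for the single-server $n^{th}$-moment lower bound in Lemma~\ref{lem:nthMomemnt:LBs} itself, namely controlling the unused-service correction term that arises when one iterates the steady-state drift of a polynomial Lyapunov function $W_n(\Phi)=\Phi^{n+1}$; however, that work is already absorbed into Lemma~\ref{lem:nthMomemnt:LBs}, so the contribution of the present lemma is essentially bookkeeping: reading off the correct ``projected'' mean, variance, and service-cap from the geometry of $\cR$, and carrying out the coupling to import the single-server bound. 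Boundedness of the projected arrivals (by $A_{max}\sum_l c_l\uk$) and the service cap $b\uk$ ensures the hypotheses of Lemma~\ref{lem:nthMomemnt:LBs} are met uniformly for all small $\epsilon\uk$, so the vanishing constants $B_n\uvek$ can indeed be chosen uniformly in the regime needed for the heavy-traffic limit.
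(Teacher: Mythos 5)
Your proposal is correct and follows essentially the same route the paper intends: the paper proves the generic single-server $n^{th}$-moment lower bound (Lemma~\ref{lem:nthMomemnt:LBs}) and then simply "applies this result using the same construction as in Section~\ref{sec:LB_Scheduling}," i.e.\ the hypothetical queue with $\alpha\uk[t]=\La \vc\uk,\vA\uve[t]\Ra$, $\beta\uk[t]=b\uk$, and the stochastic dominance $\Phi\uk\uve[t]\leq \La \vc\uk,\vQ\uve[t]\Ra$, exactly as you spell out. The only cosmetic difference is that Lemma~\ref{lem:nthMomemnt:LBs} yields the bound with variance parameter $\zeta\uvek+(\epsilon\uk)^2$, which is at least $\zeta\uvek$, so the stated inequality follows a fortiori, matching the paper's convention in Lemma~\ref{lem:Scheduling_LBs}.
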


\paragraph{2. State-Space Collapse:} The state space collapse result for MW Scheduling provided in Proposition~\ref{prop:MWS_Qperp_boundedness} applies directly. Next, we utilize this result to develop upper bounds on the $n^{th}$ moment of the steady-state queue-lengths.

\paragraph{3. Upper Bounds and $n^{th}$ Moment Heavy-Traffic-Optimality of MW Scheduling:} The main idea behind the analysis is to utilize the state-space collapse result, which implies that the total unused service under the MW Scheduler is small unless \emph{all} queue-lengths are small. Thus, as the system gets heavily loaded, the total unused service vanishes, making the system act similarly to the lower bounding system investigated above. The following result builds on this to establish the $n^{th}$ moment heavy-traffic optimality of MW Scheduler.

\begin{proposition}\label{prop:MWS_UB_nthMoment}
Consider the scheduling problem under the exogenous arrival vector process $\{\vA\uve[t]\}_t$ with mean vector $\vlam\uve \in Int(\cR)$ as defined in (\ref{eqn:projection_k}), and with variance vector $(\vsig\uve)^2\triangleq \left((\sigma_l\uve)^2\right)_{l=1}^L$. Then, under the MW Scheduling, the limiting steady-state queue-vector $\bvQ\uve$ satisfies
 \beqa
 (\euk)^n \bE\left[\La \vc\uk, \bvQ\uve \Ra^n\right] &\leq& n! \left(\frac{\zeta\uvek}{2}\right)^n + \bB_n\uvek, \qquad n\geq 1, \label{eqn:MWS_UB_nthMoment}
 \eeqa
where we recall that $\zeta\uvek \triangleq \La (\vc\uk)^2,(\vsig\uve)^2 \Ra$ as it is defined in Lemma~\ref{lem:Scheduling_LBs}, and $\bB_n\uvek$
is vanishing as $\euk\downarrow 0.$

Also, in the \textbf{heavy traffic limit}, where we consider a sequence of exogenous arrival processes $\{\vA\uvek[t]\}_t$ with their mean vector $\vlam\uve$ approaching the $k^{th}$ face along its normal $\vc\uk,$ and variance vectors $(\vsig\uve)^2$ approaching a constant vector $\vsig^2,$ we have
 \beqa
 \limsup_{\euk\downarrow 0}
    (\euk)^n \bE\left[\La \vc\uk, \bvQ\uve \Ra^n\right]
        &\leq& n!\left(\frac{\zeta\uk}{2}\right)^n, \qquad n\geq 1,
            \label{eqn:MWS_UB_HT_nthMoment}
 \eeqa
where $\zeta\uk \triangleq \La (\vc\uk)^2,\vsig^2 \Ra.$

Hence, comparing the heavy-traffic lower-bound (\ref{eqn:Scheduling_LB_HT_nthMoment}) for any feasible policy to the heavy-traffic upper-bound (\ref{eqn:MWS_UB_HT_nthMoment}) for MW Scheduler establishes the \emph{$n^{th}$-moment heavy-traffic optimality of MW Scheduling Policy}.
\end{proposition}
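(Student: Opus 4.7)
The plan is to mirror the first-moment proof of Proposition~\ref{prop:MWS_UB_firstmoment} but with the higher-order Lyapunov function $V_n\uk(\vQ)\triangleq\langle \vc\uk,\vQ\rangle^{n+1}$, combined with an induction on $n$. Since Lemma~\ref{lem:MWS_BddMoments} guarantees all moments of $\bvQ\uve$ are finite, the single-step drift of $V_n\uk$ has zero expectation in steady state. Writing $q=\langle \vc\uk,\vQ\rangle$ and $\Delta = \langle \vc\uk,\vA-\vS+\vU\rangle$, so that $q^+=q+\Delta$, the binomial expansion gives
\begin{equation*}
0 = \bE\bigl[(q+\Delta)^{n+1}-q^{n+1}\bigr]
= \sum_{j=0}^{n}\binom{n+1}{j}\bE\bigl[q^{j}\Delta^{n+1-j}\bigr].
\end{equation*}
I would identify the $j=n$ and $j=n-1$ terms as the asymptotically dominant ones and absorb every other term into a remainder that is $o\bigl((\epsilon\uk)^{-n+1}\bigr)$.

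For the $j=n$ term, $(n+1)\bE[q^{n}\Delta]$, the independence of $\vA$ from $\vQ$ together with $\langle \vc\uk,\vlam\rangle=b\uk-\epsilon\uk$ isolates the leading piece $-(n+1)\epsilon\uk\bE[q^{n}]$, leaving $(n+1)\bE[q^{n}(b\uk-\langle \vc\uk,\vS(\bvQ)\rangle)]$ and $(n+1)\bE[q^{n}\langle \vc\uk,\vU(\bvQ)\rangle]$ as corrections. These are handled exactly as in the first-moment proof: the cone identity~(\ref{eqn:MWS:def:thetak}) bounds the first correction by $\cot(\theta\uk)\bE[\|\bvQ_\perp\uvek\|\,|b\uk-\langle\vc\uk,\vS\rangle|\,q^{n-1}]$, and Lemma~\ref{lem:UnusedService} rewrites $\langle\vc\uk,\vU\rangle$-contributions as inner products with $\bvQ_\perp^+$; Cauchy–Schwarz (or H\"older) then turns both into products of a uniform moment bound from Proposition~\ref{prop:MWS_Qperp_boundedness} and a power of $\epsilon\uk$ inherited from~(\ref{eqn:MWS:diffSq}) and~(\ref{eqn:MWS:UbarIsSmall}). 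For the $j=n-1$ term, $\binom{n+1}{2}\bE[q^{n-1}\Delta^{2}]$, expansion of $\Delta^2$ and independence of $\vA$ from $\vQ$ contribute $\binom{n+1}{2}\bE[q^{n-1}]\,\zeta\uvek$ as the leading piece in the heavy-traffic limit, with the $\vU$-cross-terms and the $\vS$-variance corrections both vanishing by the same estimates. Collecting, rearranging, and dividing by $n+1$ yields the recursion
\begin{equation*}
(n+1)\epsilon\uk\,\bE[q^{n}] \le \binom{n+1}{2}\zeta\uvek\,\bE[q^{n-1}] + R_n\uvek,
\end{equation*}
where $R_n\uvek=o((\epsilon\uk)^{-n+1})$. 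Multiplying by $(\epsilon\uk)^{n-1}/(n+1)$ and invoking the induction hypothesis $(\epsilon\uk)^{n-1}\bE[q^{n-1}]\le (n-1)!(\zeta\uk/2)^{n-1}+o(1)$ gives, after taking the limsup, the desired bound $n!(\zeta\uk/2)^n$, matching the lower bound~(\ref{eqn:Scheduling_LB_HT_nthMoment}).

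The main obstacle is the bookkeeping for the $j\le n-2$ terms of the binomial expansion, which are irrelevant asymptotically but must be shown to be controlled. Each such term is a sum of monomials in $q$, $\langle \vc\uk,\vA\rangle$, $\langle \vc\uk,\vS\rangle$, and $\langle \vc\uk,\vU\rangle$ of total degree $n+1$, with up to $n+1-j$ factors of $\vU$. The strategy is to: (i) iteratively apply Lemma~\ref{lem:UnusedService} to convert every $\langle \vc\uk,\vU\rangle$ factor acting against a $q^+$ into an inner product with $\bvQ_\perp^+$, (ii) bound the remaining $\vU$ factors pointwise by $S_{max}$ and peel off one factor of $\bE[\langle\vc\uk,\vU(\bvQ)\rangle]\le \epsilon\uk$, and (iii) apply H\"older's inequality to split the residual expectation into moments of $q$ (controlled by the inductive hypothesis applied to $\bE[q^m]$ for $m<n$, which gives $\bE[q^m]=O(\epsilon\uk^{-m})$) and moments of $\|\bvQ_\perp\uvek\|$ (uniformly bounded by Proposition~\ref{prop:MWS_Qperp_boundedness}). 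Carrying through the exponent accounting shows each such lower-$j$ term is indeed of smaller order than $(\epsilon\uk)^{-n+1}$, closing the induction and establishing~(\ref{eqn:MWS_UB_nthMoment}); the heavy-traffic optimality statement~(\ref{eqn:MWS_UB_HT_nthMoment}) then follows by taking $\epsilon\uk\downarrow 0$ and comparing to Lemma~\ref{lem:nthMoment:Scheduling_LBs}.
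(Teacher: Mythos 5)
Your proposal is correct and follows essentially the same route as the paper's Appendix proof: set the steady-state drift of a power of $\La \vc\uk,\vQ\Ra$ to zero, expand binomially, extract the dominant recursion $(\epsilon\uk)^{n}\bE[\La \vc\uk,\bvQ\Ra^{n}] \approx \frac{n}{2}\zeta\uvek\,(\epsilon\uk)^{n-1}\bE[\La \vc\uk,\bvQ\Ra^{n-1}]$, and kill the correction terms using the cone-angle bound with (\ref{eqn:MWS:diffSq}), Lemma~\ref{lem:UnusedService} on the restricted coordinates, Cauchy--Schwarz/H\"older, and the uniform moment bounds of Proposition~\ref{prop:MWS_Qperp_boundedness}, closing with the same induction as in the lower-bound argument. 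The only differences are bookkeeping (you use the degree-$(n+1)$ Lyapunov function and lump $\vA-\vS+\vU$ into a single increment, whereas the paper centers around $b\uk$ and organizes the remainder into three explicit sums), not a different method.
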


\begin{proof}
See Appendix~\ref{app:nth moment upper bound}.
\hfill
\end{proof}

We note that the $n^{th}$ moment argument concerns optimality of the norm of the projection $\|\bvQ_\parallel\uk\|$ onto the vector $\vc\uk.$ It is interesting to relate this norm to the norm of the queue-length vector $\|\bvQ\|$ when $n=2$ to show the $2^{nd}$ moment optimality of MW Scheduling in minimizing $\lim_{\euk\downarrow 0} (\euk)^2 \bE[\|\bvQ\|^2].$ This consequence is provided in the following corollary.

\begin{corollary}\label{cor:2ndMomentResult}
Under the same conditions as in Proposition~\ref{prop:MWS_UB_nthMoment}, the MW Scheduler achieves, in the heavy-traffic limit,
 \beqa
 \limsup_{\euk\downarrow 0}
    (\euk)^2 \bE\left[\|\bvQ\|^2\right]
        &\leq& \frac{(\zeta\uk)^2}{2},
            \label{eqn:MWS_UB_HT_2ndMoment}
 \eeqa
where $\zeta\uk \triangleq \La (\vc\uk)^2,\vsig^2 \Ra.$ Furthermore, since the right-hand-side is the smallest heavy-traffic limit achievable by any policy, this establishes $2^{nd}$-moment heavy-traffic-optimality of MW Scheduling.
\end{corollary}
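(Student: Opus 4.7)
My plan is to derive Corollary 1 as a direct consequence of Proposition 2 (applied with $n=2$), the state-space collapse result Proposition 2 on $\bE[\|\bvQ_\perp^{(\veps,k)}\|^r]$, and the scheduling lower bound Lemma (lower bounds for scheduling). The key geometric observation is the Pythagorean decomposition
\[
\|\bvQ\|^2 \;=\; \|\bvQ_\parallel^{(k)}\|^2 + \|\bvQ_\perp^{(k)}\|^2 \;=\; \La \vc\uk, \bvQ \Ra^2 + \|\bvQ_\perp^{(k)}\|^2,
\]
which holds because $\vc\uk$ is a unit vector. So the $n=2$ moment bounds on $\La \vc\uk,\bvQ\Ra$ transfer to bounds on $\|\bvQ\|^2$ up to the perpendicular correction.

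For the upper bound, I would take expectations in the identity above and split the limit into the two contributions. Proposition 2 with $n=2$ gives
\[
\limsup_{\epsilon\uk \downarrow 0}(\epsilon\uk)^2 \bE\left[\La \vc\uk,\bvQ\Ra^2\right] \;\leq\; 2!\left(\frac{\zeta\uk}{2}\right)^2 \;=\; \frac{(\zeta\uk)^2}{2}.
\]
For the perpendicular term, Proposition on state-space collapse guarantees $\bE[\|\bvQ_\perp\uvek\|^2] \leq N_2\uk$ uniformly in $\veps$, so $(\epsilon\uk)^2 \bE[\|\bvQ_\perp\uvek\|^2] \leq (\epsilon\uk)^2 N_2\uk \to 0$ as $\epsilon\uk \downarrow 0$. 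Summing the two contributions via the trivial bound $\limsup(a_\epsilon+b_\epsilon) \leq \limsup a_\epsilon + \limsup b_\epsilon$ yields the claimed upper bound (\ref{eqn:MWS_UB_HT_2ndMoment}).

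For the optimality claim, the point is that the right-hand side $\frac{(\zeta\uk)^2}{2}$ is a universal lower bound for any feasible scheduling policy. Indeed, the Pythagorean identity gives $\|\bvQ\|^2 \geq \La \vc\uk,\bvQ\Ra^2$ pathwise, and Lemma on scheduling $n^{th}$-moment lower bounds (which holds for \emph{every} feasible policy since it is derived from the hypothetical single-server queue associated with the $k^{th}$ hyperplane that stochastically dominates the projection) yields
\[
\liminf_{\epsilon\uk \downarrow 0}(\epsilon\uk)^2 \bE\left[\|\bvQ\|^2\right] \;\geq\; \liminf_{\epsilon\uk \downarrow 0}(\epsilon\uk)^2 \bE\left[\La\vc\uk,\bvQ\Ra^2\right] \;\geq\; \frac{(\zeta\uk)^2}{2}
\]
for any feasible policy stabilizing the network. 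Combined with the upper bound above (achieved by MW), this shows MW is $2^{nd}$-moment heavy-traffic optimal in minimizing $\lim (\epsilon\uk)^2\bE[\|\bvQ\|^2]$.

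There is no substantive obstacle here: the corollary is essentially a bookkeeping consequence of earlier results, and the only thing to be careful about is that the state-space collapse bound $N_2\uk$ is independent of $\veps$ (which is precisely the content of Proposition on state-space collapse) so that multiplying by $(\epsilon\uk)^2$ genuinely annihilates the perpendicular contribution in the limit. This uniformity in $\veps$ is the property that makes the whole Lyapunov-drift methodology yield asymptotically tight bounds.
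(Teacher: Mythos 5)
Your proposal is correct and follows essentially the same route as the paper: the Pythagorean decomposition $\|\bvQ\|^2=\|\bvQ_\parallel\uk\|^2+\|\bvQ_\perp\uk\|^2$, the $\veps$-uniform state-space collapse bound $N_2\uk$ to kill the perpendicular term after scaling by $(\epsilon\uk)^2$, and the $n=2$ heavy-traffic upper and lower bounds on $\bE[\La\vc\uk,\bvQ\Ra^2]$ for the optimality claim. No gaps.
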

\begin{proof}
The proof simply follows from noting that
\beqano
\|\vQ_\parallel\uk\|^2 \leq \|\vQ\|^2 = \|\vQ_\parallel\uk\|^2 + \|\vQ_\perp\uk\|^2,
\eeqano
from Pythagorean Theorem, and from combining the state-space collapse result $\bE[\|\vQ_\perp\uk\|^2]\leq N_2\uk$ where $N_2\uk$ is independent of $\euk$ with the tight heavy-traffic lower and upper bounds (\ref{eqn:LB_HT_nthMoment}) and (\ref{eqn:MWS_UB_HT_nthMoment}) on $\bE[\|\vQ_\parallel\uk\|^2].$
\hfill
\end{proof}

\subsection{Channel Fading}
\label{sec:SchedulingWithFading}

We consider the same setup as in Section~\ref{sec:model_Scheduling}, depicted in Figure~\ref{fig:MWS_System}, receiving nonnegative-integer-valued vector of arrivals $\{\vA[t]\}_{t\geq 0}$ with $A_l[t] \leq A_{max}, \forall l,t,$ distributed independently over links and also identically over time. However, instead of a \emph{fixed} set of feasible rate vectors $\cS,$ we allow the feasible set to evolve randomly in time over a \emph{finite} state space. In particular, we let $\{J[t]\}_{t\geq 0}$ be an i.i.d. sequence of random variables capturing the global state of the channel states of all links in the network. We assume that $J[t]\in \cJ$ for some set $\cJ$ with finite cardinality, and let $\psi_j \triangleq \bP(J[t]=j).$ Then, each global state $j \in \cJ$ yields a set of feasible rate vectors $\cS\uj$ that can be provided under that state. We assume that $\cS\uj$ has finite cardinality for each $j$ with $S_l \leq S_{max},$ for all $l$ and each $\vS\triangleq (S_l)_l \in\cS\uj.$ For simplicity, we also assume that each feasible rate vector in $\cS\uj$ is composed of non-negative integers, although the arguments hold for any discrete set of choices.



\paragraph{Capacity Region Under Channel Fading:}
With channel fading, the maximum achievable rate region becomes
\beqano
\bcR &\triangleq& \sum_{j\in \cJ} \psi_j \>{Convex \> Hull}\>(\cS\uj)\\
    \> &=& \> Convex\> Hull \> \left\{\sum_{j\in\cJ} \psi_j \>\vs\uj: \vs\uj \in \cS\uj, \textrm{ for each } j\in\cJ\right\}
\eeqano
which is henceforth called the \emph{Fading Capacity Region}. Notice that the set ${Convex \> Hull}\>(\cS\uj)$ for each $j\in\cJ$ simply yields a convex polyhedral set in $\bR_+^L$ as in (\ref{eqn:SchedulingRateRegionR}). Thus, their finite weighted-sum also yields a polyhedral set that can be equivalently described, with a convenient abuse of notation, as:
 \beqa
 \bcR = \{\vr \geq \vZero: \La \bvc\uk, \vr \Ra \leq \bb\uk, \> k=1,\cdots,\bK\}, \label{eqn:FadingSchedulingRateRegionR}
 \eeqa
where $\bK$ denotes the finite (and minimal) number of hyperplanes that fully describe the polyhedron. We refer the reader to Definition~\ref{def:SchedulingRateRegionR} for the notions of the hyperplane $\bcH\uk$, the pair $(\bvc\uk,\bb\uk),$ and the face $\bcF\uk,$ which identically apply to $\bcR$.

\paragraph{Maximum Weight (MW) Scheduler Under Fading:}
In each slot, the purpose of the scheduler is to select a feasible rate vector from the feasible set to achieve stability of the queueing network in the long run. A well-known generalization (introduced in \cite{taseph92}) of the earlier MW Scheduler (cf. Definition~\ref{def:MWS}) to this fading case is the following: given the queue-length vector $\vQ[t]$ and the global channel state $J[t]$ at the beginning of slot $t,$ the service rate vector is selected as
\beqa
\vS[t] &\triangleq& \vS(\vQ[t],J[t]) \>=\> RAND\left\{\argmax_{\vS \in \cS_{J[t]}} \> \La \vQ[t], \vS\Ra\right\}. \label{eqn:def:MWS:Fading}
\eeqa

Note that the rate vectors in $\bcR$ are typically not instantaneously realizable, but only in the mean sense. In particular, \emph{mean} service rate vector over the channel variations provided by the above MW Scheduler conditioned over a queue-length vector $\vQ$ satisfies:
\beqa
\vR(\vQ) &\triangleq& \bE[\vS[t]\>|\>\vQ[t]=\vQ] \> = \> RAND\left\{\argmax_{\vR \in \bcR} \> \La \vQ, \vR\Ra\right\}. \label{eqn:Fading:RofQ}
\eeqa

We next comment on the application of the three steps of our methodology outlined in Section~\ref{sec:MethodologyOutline} to the steady-state performance of this MW Scheduler in the fading scenario.

\paragraph{1. Moment Bounds and Lower Bounds under Fading:}
The statement and proof of Lemma~\ref{lem:MWS_BddMoments} apply once we replace the non-fading capacity region with the above fading capacity region. However, we note that the specific values of $\bvc\uk$ and $\bb\uk$ are different in the non-fading and fading capacity regions, as the latter also incorporates the channel fading distribution $\vpsi\triangleq(\psi_j)_j.$

Next, we construct the lower-bounding system associated with the $k^{th}$ face of $\bcR,$ for any given $k\in\{1,\cdots,\bK\},$ as in the non-fading case, except that the service process is no longer at a constant rate but must incorporate the channel fading distribution $\vpsi$. Recalling that the pair $(\bvc\uk,\bb\uk)$ describes the associated hyperplane, we first define
$$b\ujk \triangleq \max_{\vs \in \>\cS\uj} \La \bvc\uk, \vs \Ra,
    \qquad \textrm{for each } j\in\cJ,$$
which yields the maximum $\bvc\uk$-weighted service rate available in channel state $j.$

We are now ready to describe the governing arrival and service statistics of the lower bounding system (cf. Figure~\ref{fig:LBqueue}): the arrival process $\{\alpha\uk[t]\}_{t\geq 0}$ of the lower bounding system associated with hyperplane $\bcH\uk$ is set to $\alpha\uk[t]=\La \bvc\uk, \vA[t] \Ra,$ while the service process $\{\beta\uk[t]\}_{t\geq 0}$ is distributed as:
\beqano
\bP\left(\beta\uk[t]=b\ujk\right) &=& \psi_j,
\quad \textrm{ for each } j\in\cJ.
\eeqano
We note that $\{b\ujk\}_{j\in\cJ}$ may be identical for different $j,$ in which case their probabilities are aggregated. It is also true that $\bb\uk = \bE[\beta\uk[t]]$ by construction of $\bcR$.

We note that the queue-length process $\{\Phi\uk[t]\}_{t\geq 0}$ driven by the above arrival $\{\alpha\uk[t]\}_{t\geq 0}$ and service $\{\beta\uk[t]\}_{t\geq 0}$ processes as in (\ref{eqn:LB_Qevolve}) is stochastically smaller than $\{\La \bvc\uk, \vQ[t] \Ra\}_{t\geq 0},$ where $\{\vQ[t]\}_{t\geq 0}$ is the queue-length vector process under \emph{any feasible} scheduling strategy. This follows from a coupling argument that utilizes the facts that: the service process of both the lower bounding and the actual queueing systems are governed by the same fading distribution $\vpsi;$ and that $b\ujk,$ by definition, is the largest $\bvc\uk$-weighted service that can be provided to the queueing system under channel state $j.$ Hence, any lower bound on $\{\Phi\uk[t]\}_t$ is also a lower bound on $\{\La \bvc\uk, \vQ[t] \Ra\}_t.$


The setup and definitions of Section~\ref{sec:Collapse_MWS} apply directly to the fading case given that $\cR$ is the fading capacity region defined above rather than the the non-fading capacity region of Definition~\ref{def:SchedulingRateRegionR}. Then, we have the following equivalent of Lemma~\ref{lem:Scheduling_LBs}.

\begin{lemma}\label{lem:LB:fading:Scheduling}
For the above scheduling problem with a given set of channel fading distribution  $\vpsi=(\psi_j)_j$ and their associated set of feasible schedules $\{\cS\uj\}_j,$ consider the exogenous arrival vector process $\{\vA\uve[t]\}_t$ with mean vector $\vlam\uve \in Int(\bcR)$ as defined in (\ref{eqn:projection_k}), and with variance vector denoted as $(\vsig\uve)^2\triangleq \left((\sigma_l\uve)^2\right)_{l=1}^L$. Accordingly, let the queue-length process under MW Scheduling (see (\ref{eqn:def:MWS:Fading})) with this arrival process be denoted as $\{\vQ\uve[t]\}_t$, evolving as in (\ref{eqn:Qevolve}). Moreover, let $\bvQ\uve$ denote a random random vector with the same distribution as the steady-state distribution of $\{\vQ\uve[t]\}_t$.

Then, we have
 \beqa
 \bE\left[\La \bvc\uk, \bvQ\uve \Ra\right] &\geq& \frac{\zeta\uvek}{2\euk}- B_1\uk \label{eqn:fading:Scheduling_LB_firstmoment}
 \eeqa
where $\zeta\uvek \triangleq \bE\left[\left(\La \bvc\uk,\vA\uve \Ra - \beta\uk \right)^2\right]
= \La (\bvc\uk)^2, (\vsig\uve)^2 \Ra + Var(\beta\uk) + (\euk)^2,$ and $B_1\uk \triangleq \frac{S_{max}}{2}.$

Further, consider the \textbf{heavy-traffic limit} $\euk\downarrow 0;$ and suppose that the variance vector $(\vsig\uve)^2$ approaches a constant vector $\vsig^2.$ Then, defining $\zeta\uk \triangleq \La (\bvc\uk)^2, \vsig^2 \Ra + Var(\beta\uk),$ we have
 \beqa
 \lim_{\euk\downarrow 0}
    \euk\bE\left[\La \bvc\uk, \bvQ\uve \Ra\right]
        &\geq& \frac{\zeta\uk}{2}
            \label{eqn:fading:Scheduling_LB_HT_firstmoment}
 \eeqa

\end{lemma}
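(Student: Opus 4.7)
The plan is to mimic the non-fading derivation of Lemma~\ref{lem:Scheduling_LBs}, reducing the claim to Lemma~\ref{lem:LBs} applied to the single-server lower-bounding system $\{\Phi\uk[t]\}_t$ associated with the hyperplane $\bcH\uk$, and then combining with a stochastic dominance argument.

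First, I would verify the stochastic ordering $\Phi\uk[t] \leq_{st} \La \bvc\uk, \vQ[t]\Ra$ for all $t\geq 0$ under the MW Scheduler (and in fact any feasible policy). I would construct an explicit coupling: use the same i.i.d.\ channel-state sequence $\{J[t]\}_t$ and the same arrival vector sequence $\{\vA\uve[t]\}_t$ for both systems, set $\alpha\uk[t]\triangleq \La \bvc\uk,\vA\uve[t]\Ra$ in the reduced system and $\beta\uk[t]\triangleq b^{(J[t],k)}$. Starting from $\Phi\uk[0]=0\leq \La \bvc\uk,\vQ[0]\Ra$ and using the Lindley recursion together with the key inequality
\[
\La \bvc\uk, \vS(\vQ[t],J[t])\Ra \;\leq\; \max_{\vs\in\cS^{(J[t])}} \La \bvc\uk,\vs\Ra \;=\; b^{(J[t],k)} \;=\; \beta\uk[t],
\]
a straightforward induction on $t$ yields the ordering sample-path-wise; monotonicity of expectations then gives $\bE[\bPhi\uk]\leq \bE[\La \bvc\uk, \bvQ\uve\Ra]$ in steady state.

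Next, I would compute the first two moments of $\alpha\uk[t]$ and $\beta\uk[t]$ so as to apply Lemma~\ref{lem:LBs} to $\{\Phi\uk[t]\}_t$. By (\ref{eqn:projection_k}) and independence of the arrival coordinates,
\[
\bE[\alpha\uk[t]] \;=\; \La \bvc\uk,\vlam\uve\Ra \;=\; \bb\uk-\epsilon\uk, \qquad Var(\alpha\uk[t]) \;=\; \La (\bvc\uk)^2,(\vsig\uve)^2\Ra,
\]
while by construction of the lower-bounding service process $\bE[\beta\uk[t]]=\bb\uk$ and $Var(\beta\uk[t])=Var(\beta\uk)$. Consequently the gap parameter is exactly $\bb\uk-\bE[\alpha\uk[t]]=\epsilon\uk$, and the ``$\zeta$'' of Lemma~\ref{lem:LBs} equals
\[
\zeta\uvek \;=\; \La (\bvc\uk)^2,(\vsig\uve)^2\Ra + Var(\beta\uk) + (\epsilon\uk)^2,
\]
matching the statement. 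Boundedness of $\beta\uk[t]$ (e.g.\ by $S_{max}\sqrt{L}$, via Cauchy--Schwartz on $\La \bvc\uk, \vs\Ra$ with $\|\bvc\uk\|=1$) supplies the $\beta_{max}$ needed for the constant $B_1\uk$ in Lemma~\ref{lem:LBs}.

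Plugging into (\ref{eqn:LB_firstmoment}) yields $\bE[\bPhi\uk]\geq \zeta\uvek/(2\epsilon\uk) - B_1\uk$, and combining with the stochastic dominance gives (\ref{eqn:fading:Scheduling_LB_firstmoment}). The heavy-traffic bound (\ref{eqn:fading:Scheduling_LB_HT_firstmoment}) then follows by multiplying through by $\epsilon\uk$, letting $\epsilon\uk\downarrow 0$, and invoking the assumed convergence $(\vsig\uve)^2\to \vsig^2$ (note $Var(\beta\uk)$ is fixed by $\vpsi$, so it does not vary with $\veps$). The only delicate point is the stochastic dominance step, since unlike the non-fading case the service cap is random; however, the coupling above makes it clean, because both systems see the same $J[t]$ and $\beta\uk[t]$ is defined precisely as the maximum $\bvc\uk$-weighted rate available in state $J[t]$. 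Everything else is a direct transcription of the non-fading argument.
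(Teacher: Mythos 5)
Your proposal is correct and takes essentially the same route as the paper: the paper's (implicit) proof is precisely the construction of the single-server system with $\alpha\uk[t]=\La \bvc\uk,\vA[t]\Ra$ and $\beta\uk[t]=b^{(J[t],k)}$, a coupling over the common channel-state sequence $\{J[t]\}_t$ (using that $b\ujk$ is the largest $\bvc\uk$-weighted service available in state $j$) to get stochastic dominance of $\Phi\uk[t]$ by $\La \bvc\uk,\vQ[t]\Ra$ under any feasible policy, followed by an application of Lemma~\ref{lem:LBs}. The only discrepancy is inessential: your bound $\beta_{max}\leq \sqrt{L}\,S_{max}$ yields $B_1\uk=\sqrt{L}\,S_{max}/2$ rather than the stated $S_{max}/2$ (the paper's own construction likewise gives $\max_j b\ujk/2$), and this constant plays no role in the heavy-traffic conclusion.
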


Comparing these lower bounds with their non-fading counterparts (\ref{eqn:Scheduling_LB_firstmoment}) and (\ref{eqn:Scheduling_LB_HT_firstmoment}), we note that they include $Var(\beta\uk)$ which captures the impact of the fading distribution $\vpsi$ on the steady-state mean queue-length levels.

\paragraph{2. State-Space Collapse of MW Scheduling under Fading:}
The state-space collapse argument under fading follows the same line of argument as in Section~\ref{sec:Collapse}. First, the statement and proof of Lemma~\ref{lem:CollapseFirstSteps} applies without modification since the maximum service rate is uniformly bounded by $S_{max}$ in every channel state. Next, we follow the development of Section~\ref{sec:Collapse_MWS} to consider the performance of a sequence of systems associated with a sequence of arrival processes $\{\vA\uve[t]\}_{t\geq 0}$ parameterized by $\veps> 0$ as defined in (\ref{eqn:projection_k}).

Then, the corresponding queue-length vector process under MW Scheduling $\{\vQ\uve[t]\}_t$ converge in distribution to $\bvQ\uve.$
Also, the definitions of $\vQ_\parallel\uvek,$ and $\vQ_\perp\uvek$ remain the same as in Section~\ref{sec:Collapse_MWS} with $\cR$ representing the fading capacity region (\ref{eqn:FadingSchedulingRateRegionR}). Then, the following state-space collapse result follows.

%

\begin{proposition}\label{prop:fading:MWS_Qperp_boundedness}
Assume $\vlam\uve \in Int(\bcR)$ as defined in (\ref{eqn:projection_k}). Then, under the MW Scheduling Policy, there exist finite constants $\{N_r\uk\}_{r=1,2,\cdots}$ such that $\bE\left[\|\bvQ_\perp\uvek\|^r\right] \leq N_r\uk,$ for all $\veps > 0,$ and each $r=1,2,\cdots.$
\end{proposition}

\begin{proof}[Outline]
The proof of Proposition~\ref{prop:MWS_Qperp_boundedness} directly applies to this statement with $\cR$ defined as in (\ref{eqn:FadingSchedulingRateRegionR}), and by replacing $\bE[\vS \> | \> \vQ]$ with $\vR(\vQ)$ as defined in (\ref{eqn:Fading:RofQ}). Most importantly, with these substitutions, the key property (\ref{eqn:MWS:MeanBehavior}) continues to hold for the MW Scheduler under fading, which allows the rest of the argument to apply without modification.
\hfill
\end{proof}

\paragraph{3. Upper Bounds and Heavy-Traffic-Optimality of MW Scheduling under Fading:} Similarly to the first two steps, the upper bound arguments of Section~\ref{sec:UBs} extend to the fading scenario with minor modifications. First, Lemma~\ref{lem:UBFirstSteps} applies once we replace $\vS(\bvQ)$ with $\vS(\bvQ,J)$ (see (\ref{eqn:def:MWS:Fading})) to capture the channel randomness, and let the expectation be over the channel fading distribution $\vpsi$ as well.
Then, the statement of Proposition~\ref{prop:MWS_UB_firstmoment} applies almost without modification to the fading case, which is repeated here for convenience.

\begin{proposition}\label{prop:fading:MWS_UB_firstmoment}
Consider the scheduling problem under fading with capacity region (\ref{eqn:FadingSchedulingRateRegionR}) serving the exogenous arrival vector process $\{\vA\uve[t]\}_t$ with mean vector $\vlam\uve \in Int(\bcR)$ as defined in (\ref{eqn:projection_k}), and with variance vector $(\vsig\uve)^2\triangleq \left((\sigma_l\uve)^2\right)_{l=1}^L$. Then, under MW Scheduling, the limiting steady-state queue-vector $\bvQ\uve$ satisfies
 \beqa
 \bE\left[\La \bvc\uk, \bvQ\uve \Ra\right] &\leq& \frac{\zeta\uvek}{2\euk} + \bB_1\uvek,\label{eqn:fading:MWS_UB_firstmoment}
 \eeqa
where we recall that $\zeta\uvek \triangleq \La (\bvc\uk)^2, (\vsig\uve)^2 \Ra + Var(\beta\uk) + (\euk)^2$ as it is defined in Lemma~\ref{lem:LB:fading:Scheduling}, and $\bB_1\uvek$
is $o\left(\frac{1}{\euk}\right),$ i.e., $\dsp\lim_{\euk\downarrow 0} \euk\bB_1\uvek = 0.$

Also, in the \textbf{heavy traffic limit}, where we consider a sequence of exogenous arrival processes $\{\vA\uvek[t]\}_t$ with their mean vector $\vlam_\vA\uve$ approaching the $k^{th}$ dominant face along its normal $\bvc\uk,$ and variance vectors $(\vsig_\vA\uve)^2$ approaching a constant vector $\vsig^2,$ we have
 \beqa
 \lim_{\euk\downarrow 0}
    \euk\bE\left[\La \vc\uk, \bvQ\uve \Ra\right]
        &\leq& \frac{\zeta\uk}{2}
            \label{eqn:fading:MWS_UB_HT_firstmoment},
 \eeqa
where $\zeta\uk \triangleq \La (\bvc\uk)^2, (\vsig\uve)^2 \Ra + Var(\beta\uk).$

Hence, comparing the heavy-traffic lower-bound (\ref{eqn:fading:Scheduling_LB_HT_firstmoment}) for any feasible policy to the heavy-traffic upper-bound (\ref{eqn:fading:MWS_UB_HT_firstmoment}) for MW Scheduler establishes the \emph{first moment heavy-traffic optimality of MW Scheduling Policy under fading}.
\end{proposition}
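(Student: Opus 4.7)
The plan is to mirror the four-term decomposition used in the proof of Proposition~\ref{prop:MWS_UB_firstmoment}, but with expectations taken jointly over the steady-state queue vector $\bvQ$ and the i.i.d.\ channel state $J$, and with the role of the constant $b\uk$ replaced by the fading-dependent random quantity $\beta\uk\triangleq\max_{\vs\in\cS^{(J)}}\La\bvc\uk,\vs\Ra$. The key identity to start from is Lemma~\ref{lem:UBFirstSteps} applied with $\vc:=\bvc\uk$ and $\vS(\bvQ):=\vS(\bvQ,J)$, whose validity extends to the fading case because the only ingredient needed is that $W_\parallel(\vQ)$ has zero mean drift in steady state, which continues to hold (since $\bE[\|\bvQ\uve\|^r]<\infty$ by the fading analogue of Lemma~\ref{lem:MWS_BddMoments}, and the per-step drift is bounded).

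First I would redo the ``unused-service is small'' estimate: setting the mean drift of $\La\bvc\uk,\vQ\Ra$ to zero and using $\bE[\vS(\bvQ,J)]=\vR(\bvQ)\in\bcR$ together with $\La\bvc\uk,\vr\Ra\le\bb\uk$ for $\vr\in\bcR$ gives the fading analogue of (\ref{eqn:MWS:UbarIsSmall}), namely $\bE[\La\bvc\uk,\vU(\bvQ)\Ra]\le\epsilon\uk$. This immediately yields the fading counterpart of (\ref{eqn:MWS:UB:aux3}), bounding $\bE[\La\bvc\uk,\vU\Ra^2]\le\epsilon\uk\La\bvc\uk,S_{max}\vOne\Ra$. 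Next, for the cross term (\ref{eqn:UBFirstStep2}), the proof uses Lemma~\ref{lem:UnusedService} and Cauchy--Schwarz in the reduced subspace indexed by $\cL_{++}\uk$ exactly as in (\ref{eqn:MWS:UB:aux4a})--(\ref{eqn:MWS:UB:aux4}); this remains valid in the fading case because Lemma~\ref{lem:UnusedService} is purely pathwise and Proposition~\ref{prop:fading:MWS_Qperp_boundedness} supplies the required $\bE[\|\bvQ_\perp\uvek\|^2]\le N_2\uk$. The resulting bound is $O(\sqrt{\epsilon\uk})$.

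The main new work is in the two ``variance-like'' terms. For $\bE[\La\bvc\uk,\vA-\vS(\bvQ,J)\Ra^2]$, I would add and subtract $\La\bvc\uk,\vlam\Ra$ inside the arrival part and $\beta\uk$ inside the service part, then use independence of arrivals from $(\bvQ,J)$ to get
\begin{eqnarray*}
\bE\La\bvc\uk,\vA-\vS\Ra^2
&=& \La(\bvc\uk)^2,(\vsig\uve)^2\Ra \\
&& {}+\bE\!\left[(\beta\uk-\La\bvc\uk,\vS(\bvQ,J)\Ra)^2\right] \\
&& {}+Var(\beta\uk)+(\epsilon\uk)^2+(\text{cross terms}),
\end{eqnarray*}
where the cross terms vanish because $\bE[\La\bvc\uk,\vA\Ra-\La\bvc\uk,\vlam\Ra]=0$ and the definition of $\epsilon\uk$ picks up a contribution of order $\epsilon\uk\cdot o(1)$. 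The first three pieces assemble to $\zeta\uvek$; the remaining middle expectation is the error we must show is $o(1)$. This is the step that requires a fading analogue of Claim~\ref{claim:pik}: conditional on $J=j$, the MW rule picks $\vS(\bvQ,j)\in\argmax_{\vs\in\cS\uj}\La\bvQ,\vs\Ra$, and within $\cS\uj$ those vectors $\vs$ that fail to attain $b\ujk=\max_{\vs\in\cS\uj}\La\bvc\uk,\vs\Ra$ lose a strictly positive constant amount along $\bvc\uk$; arguing exactly as in Claim~\ref{claim:pik} but state-by-state in $j$ (using $\bE[\La\bvc\uk,\vS\Ra]\ge\La\bvc\uk,\vlam\Ra=\bb\uk-\epsilon\uk$ and conditioning on $J$) shows the probability of such ``suboptimal'' choices is $O(\epsilon\uk)$, whence the middle term is $O(\epsilon\uk)$.

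Similarly, for the product $\bE[\La\bvc\uk,\bvQ\Ra\La\bvc\uk,\vS(\bvQ,J)-\vA(\bvQ)\Ra]$, the cone argument of (\ref{eqn:MWS:UB:aux1}) adapts: define $\theta\uk$ so that whenever $\|\vQ_\parallel\uk\|\le\|\vQ\|\cos\theta\uk$, the MW rule in \emph{every} channel state $j$ selects a schedule attaining $\La\bvc\uk,\vS(\vQ,j)\Ra=b\ujk$ (this is again a finite-geometry fact, now applied to each of the finitely many sets $\cS\uj$ and then intersected). Outside that cone one uses $\|\vQ_\perp\uk\|\ge\|\vQ\|\sin\theta\uk$, Cauchy--Schwarz, Proposition~\ref{prop:fading:MWS_Qperp_boundedness}, and the $O(\epsilon\uk)$ bound on $\bE[(\bb\uk-\La\bvc\uk,\vS(\bvQ,J)\Ra)^2]$ from the previous paragraph. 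The net result is $\bE[\La\bvc\uk,\bvQ\Ra\La\bvc\uk,\vS-\vA\Ra]\ge\epsilon\uk\,\bE[\|\bvQ_\parallel\uk\|]-O(\sqrt{\epsilon\uk})$.

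Substituting these four estimates into (\ref{eqn:UBFirstStep1})--(\ref{eqn:UBFirstStep2}), dividing by $2\epsilon\uk$, and collecting the sub-leading terms into $\bB_1\uvek=o(1/\epsilon\uk)$ yields (\ref{eqn:fading:MWS_UB_firstmoment}). The heavy-traffic conclusion (\ref{eqn:fading:MWS_UB_HT_firstmoment}) then follows from $\lim_{\epsilon\uk\downarrow 0}(\sigma_l\uve)^2=\sigma_l^2$ and matches the lower bound (\ref{eqn:fading:Scheduling_LB_HT_firstmoment}). The most delicate point, and the one to be executed most carefully, is the fading extension of Claim~\ref{claim:pik}: the claim must hold \emph{uniformly} in the channel state so that $\bE[(\beta\uk-\La\bvc\uk,\vS(\bvQ,J)\Ra)^2]=O(\epsilon\uk)$ despite $\beta\uk$ itself being random; everything else is a mechanical rerun of the non-fading derivation with $b\uk$ replaced by the random $\beta\uk$.
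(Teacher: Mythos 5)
Your proposal follows essentially the same route as the paper's (outline) proof: apply Lemma~\ref{lem:UBFirstSteps} with $\vS(\bvQ,J)$ in place of $\vS(\bvQ)$, rederive (\ref{eqn:MWS:UbarIsSmall}) via $\vR(\bvQ)\in\bcR$, extend Claim~\ref{claim:pik} state-by-state to the conditional probabilities $\pi\ujk$, and treat the fading system as a $\vpsi$-average of non-fading systems each operating on schedules attaining $b\ujk$; your per-state cone and the decomposition around the random $\beta\uk$ are exactly the modifications the paper intends. One slip to correct: in the cone/cross-term paragraph you invoke an $O(\epsilon\uk)$ bound on $\bE\left[(\bb\uk-\La \bvc\uk,\vS(\bvQ,J)\Ra)^2\right]$, but that quantity is of order $Var(\beta\uk)=O(1)$; the bound your previous paragraph actually establishes, and the one you need, is $\bE\left[(\beta\uk-\La \bvc\uk,\vS(\bvQ,J)\Ra)^2\right]=O(\epsilon\uk)$ with $\beta\uk=b^{(J,k)}$ random, while the leftover piece $\bE\left[\|\bvQ_\parallel\uk\|\,(\bb\uk-\beta\uk)\right]$ vanishes because the i.i.d.\ channel state $J[t]$ is independent of $\vQ[t]$ and $\bE[\beta\uk]=\bb\uk$. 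With that adjustment (i.e., running the whole decomposition with $b\uk$ replaced by the random $\beta\uk$, as your closing sentence already suggests), the derivation goes through as you describe and matches the paper's argument.
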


\begin{proof}[Outline]
We point to a few modifications in the proof of Proposition~\ref{prop:MWS_UB_firstmoment} that yields the proof of this statement. The need for these modifications arise from the fact that the MW scheduler does not directly select its allocation from the fading capacity region $\bcR$, but from the instantaneous feasible set of schedules $\{\cS^{J[t]}\}_t$ available at the time. This subtlety can be handled partly by working with conditional expectation $\vR(\vQ)$ defined in (\ref{eqn:Fading:RofQ}) instead of $\vS(\vQ)$, and partly by introducing conditional probabilities in the analysis.

Omitting the details, we point to the similarities and differences from the non-fading case: the derivation of (\ref{eqn:MWS:UbarIsSmall}) follows with $\vS(\bvQ)$ replaced by $\vR(\bvQ);$ the definition of $\pi\uk$ in (\ref{eqn:MWS:def:pik}) is modified to per channel state $j$ as $$\pi\ujk \triangleq \bP\left(\La \vc\uk, \vS(\bvQ,J)\Ra = b\ujk \> | \> J=j\right),$$ for each $j, k;$ the definition of $\gamma\uk$ in (\ref{eqn:MWS:def:gammak}) is unmodified; the statement of Claim~\ref{claim:pik} is true for the above conditional probability $\pi_j\uk,$ such that $(1-\pi_j\uk) = O(\euk)$ for each channel state $j$. This establishes that under heavy-traffic conditions, the MW Scheduler will operate on the dominant face with high probability in every channel state. The rest of the argument follows the non-fading case since the previous fact allows us to approach the fading case as a time-average of non-fading cases, each operating on rate vectors contributing to the dominant face $\cF\uk.$
\hfill
\end{proof}

\section{Conclusions}

The main contribution of the paper is to show that drift conditions in steady-state can be used to obtain bounds on the moments of queue lengths that are tight in heavy traffic. The key new idea here is to derive an appropriate notion of state-space collapse in steady-state which sharpen the bounds obtained using drift conditions. The results presented in this paper apply to the case where the state collapses to a single dimension. An interesting topic for further research is to understand whether the ideas presented here apply more generally.

\section*{Acknowledgements}
We thank the anonymous reviewers for their useful suggestions, and Bin Li and Siva Theja Maguluri for their careful proof-reading of an earlier version of the paper.

Research supported in part by an AFOSR MURI FA 9550-10-1­0573, Army MURIs W911NF-07-1-0287 and W911NF-08-1-0233, AFOSR Grant FA-9550-08-1-0432, DTRA Grant HDTRA1-08-1-0016, and NSF Grants CAREER-CNS-0953515 and CCF-0916664.

\bibliographystyle{plain}
\bibliography{jsq-mw-refs}

\appendix

\section{Proof of Lemma~\ref{lem:JSQ_BddMoments}}\label{sec:JSQ_BddMoments}

The proof follows from the application of Lemma~\ref{lem:Hajek} to the Markov Chain $X[t]:=\vQ[t]$ using the Lyapunov function $Z(X):= V(\vQ)\triangleq \|\vQ\|.$ First, we check that both conditions (C1) and (C2) are satisfied. We start with (C1):
\beqa
\bE\left[\Delta V(\vQ)\> | \> \vQ[t]=\vQ \right] &=& \bE\left[\|\vQ[t+1]\|-\|\vQ[t]\|]\> | \> \vQ[t]=\vQ \right]\nonumber\\
&=& \bE\left[\sqrt{\|\vQ[t+1]\|^2}
            -\sqrt{\|\vQ[t]\|^2}\> | \> \vQ[t]=\vQ \right] \nonumber\\
&\leq& \frac{1}{2\|\vQ\|}
    \bE\left[\|\vQ[t+1]\|^2
        -\|\vQ[t]\|^2\> | \> \vQ[t]=\vQ \right],\label{eqn:JSQ:MomentBounds:aux1}
\eeqa
where the inequality follows from the fact that $f(x)=\sqrt{x}$ is concave for $x\geq 0$ so that $f(y)-f(x)\leq (y-x)f'(x)=\frac{(y-x)}{2\sqrt{x}}$ with $y:=\|\vQ[t+1]\|^2$ and $x:=\|\vQ[t]\|^2.$  Next, we study the difference in (\ref{eqn:JSQ:MomentBounds:aux1}), which is simply the mean drift of the quadratic Lyapunov function $W(\vQ)\triangleq \|\vQ\|^2.$  We shall omit the time reference $[t]$ after the first step for brevity.
\beqa
 \bE[\Delta W(\vQ)\> | \> \vQ]
     &=& \bE\left[\|\vQ[t+1]\|^2 - \|\vQ\|^2 \> | \> \vQ\right]\nonumber\\
     &=& \bE\left[\|\vQ+\vA-\vS+\vU\|^2 - \|\vQ\|^2 \> | \> \vQ\right]\nonumber\\
     &=& \bE\left[\|\vQ+\vA-\vS\|^2 +2 \La \vQ+\vA-\vS, \vU \Ra +\|\vU\|^2 - \|\vQ\|^2 \> | \> \vQ\right]\nonumber\\
     &\stackrel{(a)}{\leq}& \bE\left[\|\vQ+\vA-\vS\|^2 - \|\vQ\|^2 \> | \> \vQ\right]\nonumber\\
     &=& \bE\left[2 \La \vQ, \vA-\vS \Ra + \|\vA-\vS\|^2 \> | \> \vQ\right]\nonumber\\
     &\leq& 2 \>\bE\left[ \La \vQ, \vA-\vS \Ra \> | \> \vQ\right] + K_1, \label{eqn:JSQ:MomentBounds:aux2}
\eeqa
where the inequality (a) follows from the fact that $U_l(Q_l+A_l-S_l)=-U_l^2\leq 0,$ for each $l,$
and $K_1 \triangleq L \max(A_{max},S_{max})^2$ is finite  since both the arrival and service processes are bounded.

Next, we bound the first term in (\ref{eqn:JSQ:MomentBounds:aux2}) by first defining $\epsilon \triangleq \mu_\Sigma-\lambda_\Sigma,$ and then defining a hypothetical arrival rate vector $\vlam = (\lambda_l)_l$ with respect to the given service rate vector $\vmu$ and $\epsilon$ such that $\lambda_l \triangleq \mu_l - \frac{\epsilon}{L},$ for each $l.$ Note that $\sum_{l=1}^L \lambda_l = \mu_\Sigma - \epsilon = \lambda_\Sigma.$
Then, we can massage the first term in (\ref{eqn:JSQ:MomentBounds:aux2}) as
\beqano
\bE\left[ \La \vQ, \vA-\vS \Ra \> | \> \vQ\right]
    &=& \La \vQ, \bE\left[\vA\> | \> \vQ\right]-\vlam\Ra-\La \vQ, \vmu-\vlam \Ra\\
    &\stackrel{(a)}{=}&
        \La \vQ, \bE\left[\vA\> | \> \vQ\right]-\vlam\Ra-\frac{\epsilon}{L}\La \vQ, \vOne \Ra\\
    &\stackrel{(b)}{=}& \bE[A_\Sigma \> | \> \vQ] \> Q_{min} - \La \vQ, \vlam \Ra -\frac{\epsilon}{L} \|\vQ\|_1\qquad \left\{Q_{min} \triangleq \min_{1\leq m\leq L}  Q_m \geq 0\right\}\\
    &=& \lambda_\Sigma \> Q_{min} - \sum_{l=1}^L \lambda_l Q_l -\frac{\epsilon}{L} \|\vQ\|\\
    &=& -\sum_{l=1}^L \lambda_l (Q_l-\> Q_{min})
        -\frac{\epsilon}{L} \|\vQ\|\\
    &\stackrel{(c)}{\leq}& -\frac{\epsilon}{L} \|\vQ\|
\eeqano
where step (a) follows from the definition of $\vlam;$ (b) follows from the definitions of the JSQ policy (see Definition~\ref{def:JSQ}) and the $l_1$ norm $\|\vQ\|_1\triangleq \sum_{l=1}^L |Q_l|;$ (c) is trivially true since $Q_{min} \leq Q_l$ for all $l.$
Using this bound in (\ref{eqn:JSQ:MomentBounds:aux2}) and back in (\ref{eqn:JSQ:MomentBounds:aux1}) yields
\beqano
\bE\left[\Delta V(\vQ)\> | \> \vQ[t]=\vQ \right]
    &\leq& -\frac{\epsilon}{L} + \frac{K_1}{2 \|\vQ\|},
\eeqano
which verifies Condition (C1). Moving on to Condition (C2), we have
\beqa
|\Delta V(\vQ) |
    &=& \left|\|\vQ[t+1]\|-\|\vQ[t]\|\right|\>\one(\vQ[t]=\vQ)
            \nonumber\\
    &=& \|\vQ[t+1]-\vQ[t]\| \>\one(\vQ[t]=\vQ)\nonumber\\
    &\stackrel{(a)}{\leq}& \|\vQ[t+1]-\vQ[t]\|_1 \>\one(\vQ[t]=\vQ)\nonumber\\
    &\leq& L\max_{1\leq l\leq L}|Q_l[t+1]-Q_l[t]| \>\one(\vQ[t]=\vQ)\nonumber\\
    &\stackrel{(b)}{\leq}& 2 {L}\max(A_{max},S_{max})\label{eqn:JSQ:MomentBounds:aux3}
\eeqa
where (a) follows from the fact that $\|\vx\|_1 \geq \|\vx\|$ for any $\vx\in\bR^L;$ and (b) is true since $A_l[t]$ and $S_l[t]$ are bounded by $A_{max}$ and $S_{max},$ respectively, for all $l=1,\cdots,L$ and all $t\geq0.$ This verifies Condition (C2) and completes the proof.

\section{Proof of Lemma~\ref{lem:CollapseFirstSteps}}\label{sec:CollapseFirstSteps}

We first prove (\ref{eqn:VperpDriftBound}):
\beqano
\Delta V_\perp(\vQ) &=& [\|\vQ_\perp[t+1]\|-\|\vQ_\perp[t]\|]\>\one(\vQ[t]=\vQ) \\
&=& \left[\sqrt{\|\vQ_\perp[t+1]\|^2}
            -\sqrt{\|\vQ_\perp[t]\|^2}\right] \>\one(\vQ[t]=\vQ)\\
&\leq& \frac{1}{2\|\vQ_\perp[t]\|}\left[\|\vQ_\perp[t+1]\|^2
        -\|\vQ_\perp[t]\|^2\right]\>\one(\vQ[t]=\vQ)\\
&=& \frac{1}{2\|\vQ_\perp[t]\|}
    \big[\underbrace{\left(\|\vQ[t+1]\|^2-\|\vQ[t]\|^2\right)\>\one(\vQ[t]=\vQ)}_{=\Delta W(\vQ)}
    -\underbrace{\left(\|\vQ_\parallel[t+1]\|^2
        -\|\vQ_\parallel[t]\|^2\>\one(\vQ[t]=\vQ)\right)}_{=\Delta W_\parallel(\vQ)}\big],
\eeqano
where the inequality follows from the fact that $f(x)=\sqrt{x}$ is concave for $x\geq 0$ so that $f(y)-f(x)\leq (y-x)f'(x)=\frac{(y-x)}{2\sqrt{x}}$ with $y:=\|\vQ_\perp[t+1]\|^2$ and $x:=\|\vQ_\perp[t]\|^2.$ Also, the last step follows from Pythagoras Theorem (\ref{eqn:Pythagorean}) with $\vx:=\vQ_\parallel[\cdot]$ and $\vy:=\vQ_\perp[\cdot].$

Next, we prove (\ref{eqn:VperpMaxBound}):
\beqano
|\Delta V_\perp(\vQ) |
    &=& \left|\|\vQ_\perp[t+1]\|-\|\vQ_\perp[t]\|\right|\>\one(\vQ[t]=\vQ) \\
    &\stackrel{(a)}{\leq}& \|\vQ_\perp[t+1]-\vQ_\perp[t]\| \>\one(\vQ[t]=\vQ)\\
    &\stackrel{(b)}{=}& \|\vQ[t+1]-\vQ[t]-\vQ_\parallel[t]+\vQ_\parallel[t+1]\| \>\one(\vQ[t]=\vQ)\\
    &\stackrel{(c)}{\leq}& \left(\|\vQ[t+1]-\vQ[t]\|+\|\vQ_\parallel[t+1]-\vQ_\parallel[t]\|\right) \>\one(\vQ[t]=\vQ)\\
    &\stackrel{(d)}{\leq}& 2 \|\vQ[t+1]-\vQ[t]\| \>\one(\vQ[t]=\vQ)\\
    &\stackrel{(e)}{\leq}& 2 \sqrt{L}\max_{1\leq l\leq L}|Q_l[t+1]-Q_l[t]| \>\one(\vQ[t]=\vQ)\\
    &\stackrel{(f)}{\leq}& 2 \sqrt{L}\max(A_{max},S_{max})
\eeqano
where: (a) follows from the fact that $| \|\vx\|-\|\vy\| | \leq \|\vx-\vy\|$ for each $\vx, \vy \in\bR^L;$ (b) follows from the definition of $\vQ = \vQ_\perp + \vQ_\parallel;$ (c) follows from triangle inequality; (d) follows from the non-expansive nature of the projection onto a convex set once we note that $\vQ_\parallel[\cdot]$ is the projection of $\vQ[\cdot]$ onto the line along $\vc$, which implies that $\|\vQ_\parallel[t+1]-\vQ_\parallel[t]\|\leq \|\vQ[t+1]-\vQ[t]\|;$ (e) trivially follows from the definition of $\|\cdot\|;$ and (f) is true since $A_l[t]$ and $S_l[t]$ are respectively assumed to be bounded by $A_{max}$ and $S_{max}$ for all $l=1,\cdots,L$ and all $t\geq0.$

\section{Proof of Lemma~\ref{lem:nthMomemnt:LBs}}\label{app:nth moment}

We have already argued the weak convergence and boundedness of all moments of the limiting random variable $\bPhi\e$ for each $\epsilon>0$ in the proof of Lemma~\ref{lem:LBs}. Recalling the evolution (\ref{eqn:LB_Qevolve2}), we study the mean drift of the Lyapunov function $W_n(\Phi) \triangleq \|\Phi\|^n.$ In the following, we temporarily omit the time reference $[t]$ and the superscript $\e$ for ease of exposition:
\beqano
\bE[\Delta W_n(\Phi[t]) \> | \> \Phi[t]=\Phi]
    &=& \bE[ (\Phi+\alpha-\beta+\chi)^n-\Phi^n  \> | \> \Phi] \\
    &=& \bE\left[(\Phi+\alpha-\beta)^n-\Phi^n
        + \sum_{i=0}^{n-1} \left( \begin{array}{cc} n\\i\end{array}\right)
                (\Phi +\alpha-\beta)^i \chi^{n-i}\> | \> \Phi \right]\\
    &\stackrel{(a)}{=}& \bE\left[(\Phi+\alpha-\beta)^n-\Phi^n
        + \chi^n \sum_{i=0}^{n-1} \left( \begin{array}{cc} n\\i\end{array}\right)
                (-1)^i \> | \> \Phi \right]\\
    &{=}& \bE\left[(\Phi+\alpha-\beta)^n-\Phi^n
         - (-\chi)^n \> | \> \Phi \right]\\
    &{=}& \bE\left[\sum_{i=0}^{n-1} \left( \begin{array}{cc} n\\i\end{array}\right) \Phi^i (\alpha-\beta)^{n-i} - (-\chi)^n
        \> | \> \Phi \right]\\
    &{=}& \bE\left[\sum_{i=0}^{n-2} \left( \begin{array}{cc} n\\i\end{array}\right) \Phi^i (\alpha-\beta)^{n-i} + n \Phi^{n-1} (\alpha-\beta)- (-\chi)^n
        \> | \> \Phi \right],
 \eeqano
where (a) uses the fact that $\chi = -(\Phi+\alpha-\beta)\one(\Phi+\alpha-\beta<0)$ by definition of $\chi.$
Taking expectations of both sides with respect to the steady-state distribution, i.e., setting $\Phi[t]=\bPhi,$ and noting that $\bE[\Delta W_n(\bPhi)]=0$ yields:
\beqano
0&=& \bE\left[\sum_{i=0}^{n-2} \left( \begin{array}{cc} n\\i\end{array}\right) \bPhi^i (\alpha-\beta)^{n-i} + n \bPhi^{n-1} (\alpha-\beta)- (-\chi)^n \right]
\eeqano
Re-arranging terms, noting the independence of the arrival and service processes from each other and $\Phi,$ and recalling that $\bE[\beta-\alpha]=\epsilon$ by construction allows us to write:
\beqano
n \epsilon \bE[\bPhi^{n-1}] = \sum_{i=0}^{n-2} \left( \begin{array}{cc} n\\i\end{array}\right) \bE[\bPhi^i]\> \bE\left[(\alpha-\beta)^{n-i}\right] - \bE[(-\chi)^n]
\eeqano
We separate the final term in the summation and multiply both sides with $\epsilon^{n-2}/n$ to get:
\beqa
\epsilon^{n-1} \bE[\bPhi^{n-1}] &=& \frac{(n-1)}{2} \epsilon^{(n-2)} \bE[\bPhi^{n-2}] \bE[(\alpha-\beta)^2]\label{eqn:nthMoment:LB:aux1}\\
& & +  \sum_{i=0}^{n-3} \left( \begin{array}{cc} n\\i\end{array}\right) \frac{\epsilon^{(n-2)}\bE[\bPhi^i]}{n}  \> \bE\left[(\alpha-\beta)^{n-i}\right] - \frac{\epsilon^{(n-2)}\bE[(-\chi)^n]}{n}. \label{eqn:nthMoment:LB:aux2}
\eeqa
Next, we investigate terms in (\ref{eqn:nthMoment:LB:aux2}) to show that they vanish with $\epsilon \downarrow 0.$ To that end,
we first note that
\beqa
\left| \frac{\epsilon^{(n-2)}\bE[(-\chi)^n]}{n} \right|
    &\leq& \left(\frac{\epsilon^{(n-2)}S_{max}^{n-1}}{n}\right) \bE[\chi] \> \> = \> \> \left(\frac{\epsilon^{(n-1)}S_{max}^{n-1}}{n}\right),
    \label{eqn:nthMoment:LB:aux3}
\eeqa
where the final equality follows from the fact that $\bE[\chi]=\epsilon$ under steady-state operation. Clearly, the final expression vanishes with $\epsilon\downarrow 0$ for any $n\geq 2.$

Next, we turn the summation in (\ref{eqn:nthMoment:LB:aux2}) to argue inductively that
$\dsp \lim_{\epsilon\downarrow 0} \epsilon^{n-2} \bE[\bPhi^i] = 0$ for all $n\geq 3$ and for each $i=0,1,\cdots,n-3.$ This result, when proven, implies that the whole sum is vanishing with $\epsilon.$ The first step of the induction holds trivially since $\epsilon \bE[\bPhi^0] = \epsilon.$ Suppose the claim is true for some $n\geq3$ and all $i=0,1,\cdots,n-3.$ We would like to confirm it for $(n+1)$ and $i=0,1,\cdots,n-2$ as well. This is straight-forward for all $i=0,1,\cdots,n-1.$ The case when $i=n-2$ requires us to investigate $\epsilon^{n-1} \bE[\bPhi^{n-2}]$ using the expansion (\ref{eqn:nthMoment:LB:aux1})-(\ref{eqn:nthMoment:LB:aux2}):
\beqano
\epsilon^{n-1} \bE[\bPhi^{n-2}] &=&
\sum_{i=0}^{n-2} \left( \begin{array}{cc} n\\i\end{array}\right) \left(\frac{\epsilon^{n-3}\bE[\bPhi^i]}{n}\right)\> \bE\left[(\alpha-\beta)^{n-i}\right] - \frac{\epsilon^{n-2}\bE[(-\chi)^n]}{n}.
\eeqano
The right-hand-side of this expression vanishes with $\epsilon$ since $\epsilon^{n-2}\bE[\bPhi^i]$ vanishes for each $i=0,1,\cdots,n-3$ by the induction hypothesis, and $\epsilon^{n-2}\bE[(-\chi)^n]$ vanishes according to (\ref{eqn:nthMoment:LB:aux3}). We note that the moments of $(\alpha-\beta)$ are bounded since the arrival and service processes are both bounded. This completes the induction proof, and hence establish that $\dsp \lim_{\epsilon\downarrow 0} \epsilon^{n-2} \bE[\bPhi^i] = 0$ for all $n\geq 3$ and for each $i=0,1,\cdots,n-3.$

Returning to (\ref{eqn:nthMoment:LB:aux1})-(\ref{eqn:nthMoment:LB:aux2}) and revoking the $\e$ notation to highlight the dependence on $\epsilon$, we have thus proven that there exist $\{C_n\e\}_{n\geq 1}$ that vanish with $\epsilon$ such that
\beqa
\epsilon^{n} \bE[(\bPhi\e)^{n}] &\geq& \frac{n}{2} \epsilon^{(n-1)} \bE[(\bPhi\e)^{n-1}] \> \bE[(\alpha\e-\beta)^2] - C_n\e \nonumber\\
&=& n \left(\frac{\zeta\e}{2}\right) \epsilon^{(n-1)} \bE[(\bPhi\e)^{n-1}] - C_n\e\nonumber \\
&\geq& n! \left(\frac{\zeta\e}{2}\right)^n - B_n\e,
\label{eqn:nthMoment:LB:aux}
\eeqa
where $\zeta\e \triangleq (\sigma_\Sigma\e)^2 + \nu_\Sigma^2 + \epsilon^2,$ and $B_n\e \triangleq \sum_{k=1}^{n} \frac{n!}{(n-k)!}\left(\frac{\zeta\e}{2}\right)^k C_{n-k}\e$ also vanishes with $\epsilon$, which proves (\ref{eqn:LB_nthMoment}). The heavy-traffic result (\ref{eqn:LB_HT_nthMoment}) then follows immediately by taking the limit of both sides as $\epsilon\downarrow 0.$

\section{Proof of Proposition~\ref{prop:MWS_UB_nthMoment}}\label{app:nth moment upper bound}

We temporarily omit the superscript $\uve$ associated with the arrival and queue-length processes for ease of exposition. We then study the mean drift of the Lyapunov function $W_n\uk(\vQ) \triangleq \La \vc\uk, \vQ \Ra^n$ associate with the $n^{th}$ moment.
\beqa
\bE[\Delta W_n\uk(\vQ[t])\> |\> \vQ[t] = \vQ]
    &=& \bE\left[ \La \vc\uk, \vQ + \vA - \vS + \vU\Ra^n
            - \La \vc\uk, \vQ \Ra^n \> |\> \vQ\right]\nonumber\\
    &=& \bE\left[ \La \vc\uk, \vQ + \vA - \vS \Ra^n
            - \La \vc\uk, \vQ \Ra^n \> |\> \vQ\right]\nonumber\\
    & & + \sum_{i=0}^{n-1}  \left( \begin{array}{cc} n\\i\end{array}\right)
            \bE\left[\La \vc\uk, \vQ + \vA - \vS \Ra^i\La \vc\uk, \vU \Ra^{n-i}\> |\> \vQ\right]\nonumber\\
    &=& \bE\left[ (\La \vc\uk, \vQ + \vA\Ra -b\uk
                    + b\uk - \La \vc\uk,\vS \Ra)^n
            - \La \vc\uk, \vQ \Ra^n \> |\> \vQ\right]\nonumber\\
    & & + \sum_{i=0}^{n-1}  \left( \begin{array}{cc} n\\i\end{array}\right)
            \bE\left[\La \vc\uk, \vQ + \vA - \vS \Ra^i\La \vc\uk, \vU \Ra^{n-i}\> |\> \vQ\right]\nonumber\\
    &=& \bE\left[ (\La \vc\uk, \vQ + \vA\Ra -b\uk)^n
            - \La \vc\uk, \vQ \Ra^n \> |\> \vQ\right]\nonumber\\
    & & + \sum_{i=0}^{n-1}  \left( \begin{array}{cc} n\\i\end{array}\right)
            \bE\left[(\La \vc\uk, \vQ + \vA \Ra - b\uk)^i
                        (b\uk-\La \vc\uk, \vS \Ra)^{n-i}\> |\> \vQ\right]\nonumber\\
    & & + \sum_{i=0}^{n-1}  \left( \begin{array}{cc} n\\i\end{array}\right)
            \bE\left[\La \vc\uk, \vQ + \vA - \vS \Ra^i\La \vc\uk, \vU \Ra^{n-i}\> |\> \vQ\right]\nonumber\\
    &=& n \La \vc\uk,\vQ \Ra^{n-1}
                \bE[\La \vc\uk, \vA\Ra -b\uk \> |\> \vQ]\label{eqn:nthMoment:UB:aux1}\\
    & &    + \sum_{i=0}^{n-2}  \left( \begin{array}{cc} n\\i\end{array}\right)
            \bE\left[\La \vc\uk, \vQ \Ra^i
                        (\La \vc\uk, \vA \Ra - b\uk)^{n-i}\> |\> \vQ\right]\nonumber\\
    & & + \sum_{i=0}^{n-1}  \left( \begin{array}{cc} n\\i\end{array}\right)
            \bE\left[(\La \vc\uk, \vQ + \vA \Ra - b\uk)^i
                        (b\uk-\La \vc\uk, \vS \Ra)^{n-i}\> |\> \vQ\right]\nonumber\\
    & & + \sum_{i=0}^{n-1}  \left( \begin{array}{cc} n\\i\end{array}\right)
            \bE\left[\La \vc\uk, \vQ + \vA - \vS \Ra^i\La \vc\uk, \vU \Ra^{n-i}\> |\> \vQ\right]\nonumber
\eeqa
Note in (\ref{eqn:nthMoment:UB:aux1}) that
$\bE[\La \vc\uk, \vA\Ra -b\uk \> |\> \vQ] = -\euk$ by construction. Thus, taking the expectation of both sides over the steady-state distribution, i.e. setting $\vQ = \bvQ,$ noting $\bE[\Delta W_n\uk(\bvQ)]=0,$ and multiplying both sides with $(\euk)^{n-2}$ yields:
\beqa
n(\euk)^{n-1} \bE[\La \vc\uk, \bvQ \Ra^{n-1}]=
    \sum_{i=0}^{n-2}  \left( \begin{array}{cc} n\\i\end{array}\right)
             (\euk)^{n-2} \bE\left[\La \vc\uk, \bvQ \Ra^i
                        (\La \vc\uk, \vA \Ra - b\uk)^{n-i}\right]\label{eqn:nthMoment:UB:aux2}\\
    + \sum_{i=0}^{n-1}  \left( \begin{array}{cc} n\\i\end{array}\right)
            (\euk)^{n-2} \bE\left[(\La \vc\uk, \bvQ + \vA \Ra - b\uk)^i
                        (b\uk-\La \vc\uk, \vS(\bvQ) \Ra)^{n-i}\right]\label{eqn:nthMoment:UB:aux3}\\
    + \sum_{i=0}^{n-1}  \left( \begin{array}{cc} n\\i\end{array}\right)
            (\euk)^{n-2} \bE\left[\La \vc\uk, \bvQ + \vA - \vS(\bvQ) \Ra^i\La \vc\uk, \vU(\bvQ) \Ra^{n-i}\right]\label{eqn:nthMoment:UB:aux4}
\eeqa
We note the resemblance of this expression to (\ref{eqn:nthMoment:LB:aux1})-(\ref{eqn:nthMoment:LB:aux2}) of the lower-bounding system with $\bPhi$ replaced with $\La \vc\uk,\bvQ \Ra$. Accordingly, the claimed upper bound (\ref{eqn:MWS_UB_nthMoment}) follows from induction as in the argument (\ref{eqn:nthMoment:LB:aux}) once we establish that (\ref{eqn:nthMoment:UB:aux3}) and (\ref{eqn:nthMoment:UB:aux4}) both vanish as $\euk\downarrow 0.$ Thus, we next study the behavior of each of these expressions with $\euk$ for all $n\geq 2.$

We first study the expectation in (\ref{eqn:nthMoment:UB:aux3}) to show that it is of order $\sqrt{\euk}$ and hence $(\ref{eqn:nthMoment:UB:aux3})$ must vanish as $\euk\downarrow 0$ for any $n\geq 2.$ For any $i=0,1,\cdots,n-1,$ we have\\

$\bE\left[(\La \vc\uk, \bvQ + \vA \Ra - b\uk)^i
    (b\uk-\La \vc\uk, \vS(\bvQ) \Ra)^{n-i}\right]$
\beqa
     &=& \sum_{j=0}^i \left( \begin{array}{cc} i\\j\end{array}\right)
            \bE\left[\La \vc\uk, \bvQ \Ra^j (\La \vc\uk, \vA \Ra - b\uk)^{j-i}
                (b\uk-\La \vc\uk, \vS(\bvQ) \Ra)^{n-i}\right]\nonumber \\
     &=& \sum_{j=0}^i \left( \begin{array}{cc} i\\j\end{array}\right)
            \bE\left[\|\bvQ_\parallel\uk\|^j (b\uk-\La \vc\uk, \vS(\bvQ) \Ra)^{n-i}\right]\bE\left[(\La \vc\uk, \vA \Ra - b\uk)^{j-i}\right]\label{eqn:nthMoment:UB:aux5}
\eeqa
where in the last step we used the independence of arrival processes, and used the definition of $\vQ_\parallel\uk$. Following the same argument as in the derivation of (\ref{eqn:MWS:UB:aux1}) to bound the first expectation as:
\beqano
\bE\left[\|\bvQ_\parallel\uk\|^j (b\uk-\La \vc\uk, \vS(\bvQ) \Ra)^{n-i}\right]
&\leq& (\cot(\theta\uk))^j \bE\left[\|\bvQ_\perp\uk\|^j (b\uk-\La \vc\uk, \vS(\bvQ) \Ra)^{n-i}\right] \\
&\leq& (\cot(\theta\uk))^j \sqrt{\bE\left[\|\bvQ_\perp\uk\|^{2j}\right] \bE\left[ (b\uk-\La \vc\uk, \vS(\bvQ) \Ra)^{2(n-i)}\right]}
\eeqano
We know from Proposition~\ref{prop:MWS_Qperp_boundedness} that $\bE\left[\|\bvQ_\perp\uk\|^{2j}\right] \leq N_{2j}\uk$ for some finite $N_{2j}\uk$ independent of $\euk.$ Also, since $i\in\{0,\cdots,n-1\},$ we can show that
\beqano
\bE\left[ (b\uk-\La \vc\uk, \vS(\bvQ) \Ra)^{2(n-i)}\right]
 &\leq& \frac{\euk}{\gamma\uk} \left((b\uk)^{2(n-i)}+\La \vc\uk, S_{max} \vOne\Ra^{2(n-i)}\right) \> = \> O(\euk)
\eeqano
exactly as argued in (\ref{eqn:MWS:diffSq}). Thus, these two bounds together with the fact that $A_l \leq A_{max}$ for all $l$ establishes that $(\ref{eqn:nthMoment:UB:aux5}) = O\left(\sqrt{\euk}\right),$ which, in turn, proves that $(\ref{eqn:nthMoment:UB:aux3}) = O\left((\euk)^{n-\frac{3}{2}}\right),$
i.e., (\ref{eqn:nthMoment:UB:aux3}) vanishes as $\euk\downarrow 0$ for all $n\geq 2.$

Next, we study the expectations in (\ref{eqn:nthMoment:UB:aux4}) to show that they also vanish as $\euk\downarrow 0$ for all $n\geq 2.$ We recall the $\vctil, \vQtil,\vUtil,\vQtil^+,$ and $\bE_\bvQ[\cdot]$ notation introduced for the derivations (\ref{eqn:MWS:UB:aux4a})-(\ref{eqn:MWS:UB:aux4}) and follow the same line of reasoning. In particular, we note that, for each $i=0,1,\cdots,n-1,$ the expectation in (\ref{eqn:nthMoment:UB:aux4}) can be bounded as
\beqa
\bE_\bvQ\left[\La \vc\uk, \vQ + \vA - \vS \Ra^i\La \vc\uk, \vU \Ra^{n-i}\right] &\leq& \bE_\bvQ\left[\La \vc\uk, \vQ^+ \Ra^i\La \vc\uk, \vU \Ra^{n-i}\right]\nonumber \\
&=& \bE_\bvQ\left[\La \vctil\uk, \vQtil^+ \Ra^i\La \vctil\uk, \vUtil \Ra^{n-i}\right]\nonumber \\
&=& \bE_\bvQ\left[\|\vQtil_\parallel^+\|^i \|\vUtil_\parallel\|^{n-i}\right] \label{eqn:nthMoment:UB:aux6}
\eeqa
To bound (\ref{eqn:nthMoment:UB:aux6}), we consider two cases separately: $i\leq n/2,$ and $i> n/2.$ When $i\leq n/2,$ the argument closely follows (\ref{eqn:MWS:UB:aux4a})-(\ref{eqn:MWS:UB:aux4}) to show that $(\ref{eqn:nthMoment:UB:aux6})=O\left(\sqrt{\euk}\right).$ When $i>n/2,$ we multiply and divide (assuming the non-trivial case of $\|\vUtil_\parallel\|\neq 0$) by $\|\vUtil_\parallel\|^{2i-n}$ to get
\beqano
\bE_\bvQ\left[\|\vQtil_\parallel^+\|^i \|\vUtil_\parallel\|^{n-i}\right]
&=&
\bE_\bvQ\left[\frac{\|\vQtil_\parallel^+\|^i \|\vUtil_\parallel\|^{i}}{\|\vUtil_\parallel\|^{2i-n}}\right]\\
&\stackrel{(a)}{=}&
\bE_\bvQ\left[\frac{\La \vQtil_\perp^+,\vUtil \Ra^i }{\|\vUtil_\parallel\|^{2i-n}}\right]\\
&=& \bE_\bvQ\left[\La \vQtil_\perp^+,\frac{\vUtil}{\|\vUtil_\parallel\|^{2-\frac{n}{i}}} \Ra^i \right]\\
&\stackrel{(b)}{\leq}& \sqrt{\bE_\bvQ\left[ \|\vQtil_\perp^+\|^{2i}\right]
    \bE_\bvQ\left[ \|\vUtil\|^{2(n-i)} \right]}
\eeqano
where (a) follows from Lemma~\ref{lem:UnusedService} similarly as in the derivation of (\ref{eqn:MWS:UB:aux4a}); and (b) follows from Cauchy-Schwartz inequality after minor modifications. It is then easy to show, as in (\ref{eqn:MWS:UB:aux4b}) and (\ref{eqn:MWS:UB:aux4c}), that $\bE_\bvQ\left[ \|\vQtil_\perp^+\|^{2i}\right] \leq N_{2i}\uk$ with $N_{2i}\uk$ defined in Proposition~\ref{prop:MWS_Qperp_boundedness} and $\bE_\bvQ\left[ \|\vUtil\|^{2(n-i)} \right] = O(\euk)$ since $i\leq n-1.$ Hence, we have shown that $(\ref{eqn:nthMoment:UB:aux6})=O\left(\sqrt{\euk}\right)$ in this case as well. Substituting this result back in (\ref{eqn:nthMoment:UB:aux4}) proves the claimed result that $(\ref{eqn:nthMoment:UB:aux4}) = O\left((\euk)^{n-\frac{3}{2}}\right),$
i.e., (\ref{eqn:nthMoment:UB:aux4}) vanishes as $\euk\downarrow 0$ for all $n\geq 2.$

As we noted before, the fact that $(\ref{eqn:nthMoment:UB:aux3})+(\ref{eqn:nthMoment:UB:aux4})$ vanishes when $\euk\downarrow 0$ is sufficient (exactly as in the lower-bound argument of (\ref{eqn:nthMoment:LB:aux})) to inductively derive the upper-bound (\ref{eqn:MWS_UB_nthMoment}) via the recursive relationship in (\ref{eqn:nthMoment:UB:aux2}). Consequently, the heavy-traffic result (\ref{eqn:MWS_UB_HT_nthMoment}) follows immediately, completing the proof.

\end{document}